\tikzset{->-/.style={decoration={ markings, mark=at position #1 with
{\arrow{>}}},postaction={decorate}}}
\tikzset{-<-/.style={decoration={ markings, mark=at position #1 with
{\arrow{<}}},postaction={decorate}}}
\theoremstyle{plain}
\newtheorem{theorem}{Theorem}[section]
\newtheorem{lemma}[theorem]{Lemma}
\newtheorem{corollary}[theorem]{Corollary}
\newtheorem{proposition}[theorem]{Proposition}
\theoremstyle{definition}
\newtheorem{definition}[theorem]{Definition}
\newtheorem{remark}[theorem]{Remark}
\numberwithin{equation}{section}
\numberwithin{equation}{section}
\newtheorem{conjecture}[theorem]{Conjecture}
\tikzset{->-/.style={decoration={ markings, mark=at position #1 with
{\arrow{>}}},postaction={decorate}}}
\tikzset{-<-/.style={decoration={ markings, mark=at position #1 with
{\arrow{<}}},postaction={decorate}}}
\def\lp2{\mathcal O_{\mathbb P^2}(-3)}
\def\p2{\mathbb P^2}
\def\p3{\mathbb P^3}
\def\Stab{\mathrm{Stab}}
\def\num{\mathrm{num}}
\def\Ker{\mathrm{Ker}}
\def\Coh{\mathrm{Coh}}
\def\Geo{\mathrm{Geo}}
\def\p3{\mathbb P^3}
\def\gldim{\mathrm{gldim}}
\def\gl2{\widetilde{\mathrm{GL}}^+_{2}(\mathbb R)}
\def\stabg{\mathrm{Stab}^{\mathrm{Geo}}(\mathbb P^3)}
\def\staba{\mathrm{Stab}^{\mathrm{Alg}}(\mathbb P^3)}
\def\stabgc{\mathrm{Stab}^{\mathrm{Geo}}_{\triangle}(\mathbb P^3)}
\def\stabh{\mathrm{Stab}_H}
\def\stabgh{\mathrm{Stab}^{\mathrm{Geo}}_H}
\def\stabp{\mathrm{Stab}^{\dagger}(\mathbb P^3)}
\def\Cohb{\mathrm{Coh}^{\beta}}
\def\cA{\mathcal A}
\def\cB{\mathcal B}
\def\cD{\mathcal D}
\def\cE{\mathcal E}
\def\cO{\mathcal O}
\def\cP{\mathcal P}
\newcommand{\RR}{\mathbb{R}}
\newcommand{\PP}{\mathbb{P}}
\DeclareMathOperator{\ch}{ch}
\DeclareMathOperator{\GL}{GL}
\def\stabgp{\mathrm{Stab}^{\mathrm{Geo}\dagger}(\mathbb P^3)}
\begin{document}
\title{Stability Conditions on $\mathbb P^3$}

\author{Dongjian Wu}
\address{Department of Mathematical Sciences, Tsinghua University, 100084 Beijing, China}
\email{wdj20@mails.tsinghua.edu.cn}

\author{Nantao Zhang}
\address{Department of Mathematical Sciences, Tsinghua University, 100084 Beijing, China}
\email{znt21@mails.tsinghua.edu.cn}

\date{}

\keywords{Bridgeland stability conditions, Polarized threefolds, Bogomolov-Gieseker inequality, Global dimension function}


\maketitle


\begin{abstract}
We construct a subset of the space of stability conditions for any projective threefold with an ample polarization that satisfies a certain Bogomolov-Gieseker inequality to refine the result in \cite{BMS}. Then, we demonstrate that the global dimension, as defined in \cite{Qiu20,IQ1}, is 3 for any stability condition on $\p3$ constructed in \cite{BMS}. Finally, we formulate a conjecture concerning the contractibility of a principal connected component of $\mathrm{Stab}(\p3)$.

\end{abstract}

\maketitle
\tableofcontents

\setlength\parindent{0pt}
\setlength{\parskip}{5pt}

\section*{Introduction}
In this paper, we first construct an explicit subset of the space of stability conditions for any projective threefold with an ample polarization that fulfills a certain Bogomolov-Gieseker inequality, thereby refining the result in \cite{BMS}. This is achieved by employing the function in \Cref{Psi} motivated by the Le Potier function as discussed in \cite{FLZ,D22}. Then, we show that the global dimension of the subspace of stability conditions for $\p3$ constructed in \cite{BMS} is 3, following the approach in \cite{BMT,BMS,Moz22}. Finally, we propose a conjecture regarding the contractibility of a principal connected component of $\mathrm{Stab}(\p3)$, inspired by \cite{Li17}.
\subsection{Stability conditions on polarized threefolds}
The concept of stability conditions on triangulated categories was initially introduced by Bridgeland \cite{B07}, inspired by the fields of string theory and mirror symmetry. The existence of stability conditions on three-dimensional varieties is widely regarded as one of the most difficult problems in the theory of Bridgeland stability conditions. A strategy for constructing stability conditions on polarized threefolds was first introduced in \cite{BMT}. This approach involves the use of so-called tilt stability conditions to build geometric stability conditions, provided that a generalized Bogomolov-Gieseker inequality is satisfied. Since then, this existence problem for threefolds has been extensively studied, with a wealth of research including \cite{M14,BMS, BMSZ, Piy17, Kos18, Li19, Kos20, Kos22, Liu22}. The concept of algebraic stability conditions was introduced by \cite{M07b}, which has been employed to demonstrate the existence of geometric stability conditions on $\mathbb P^n$, as shown in \cite{Mu21}. 

Our first result is the construction of a subset within the the space of stability conditions for any projective threefold with an ample polarization that meets a certain Bogomolov-Gieseker inequality to refine the construction in \cite{BMS}. Recall that \cite{BMS} asserts the existence of a continuous open embedding:
\begin{equation*}
\begin{split}
\Sigma:\gl2\times\mathfrak{B}&\to\stabgh(X)\\
(g,(\alpha,\beta,a,b))&\mapsto\sigma_{\alpha,\beta}^{a,b}[g],
\end{split}
\end{equation*}
where $\mathfrak{B}:=\left\{(\alpha,\beta,a,b)\in\mathbb R^4\mid\alpha>0,a>\frac{\alpha^2}{6}+\frac{\alpha}{2}|b|\right\}$, and $\stabgh(X)$ represents the space of geometric stability conditions on $\cD^b(X)$ with respect to $(\Lambda_H,\lambda_H)$, as discussed in \Cref{sec:construct}. The pivotal approach to achieve the refinement is to introduce the following function motivated by the Le Potier function as discussed in \cite{FLZ,D22}:
\begin{definition}[\Cref{Psi}]
Let $X$ be a projective threefold with an ample polarization $H$. We define the function twisted by $\beta H$, $\Psi_{X,\nu}:\mathbb R_{>0}\times\mathbb R^2\to\mathbb R\cup\{-\infty\}$, with respect to a parameter $\nu\in\mathbb R$, as follows:
\[
\Psi_{X,\nu}(\alpha, \beta, b):=  \limsup_{\mu \to \nu} \left\{\frac{\ch_{3}^{\beta}(F) - bH \ch_{2}^{\beta}(F)}{H^{2} \ch_{1}^{\beta}(F)}:
{\begin{aligned}
&F\in\Cohb(X) \text{ is } \nu_{\alpha, \beta}\text{-semistable}\\ 
&\text{ with } \nu_{\alpha, \beta}(F) = \mu
\end{aligned}}
\right\},
\]
if the limit exists, and $\Psi_{X,\nu}(\alpha,\beta,b):=-\infty$ otherwise. In particular, when $\nu=0$, we denote by $\Psi_{X}:=\Psi_{X,0}$ for simplicity.
\end{definition}
Our first result is stated as follows:
\begin{theorem}[\Cref{main1}]
Let $X$ be a smooth projective threefold with an ample polarization $H$ satisfying the generalized Bogomolov-Gieseker inequality in \Cref{Bog}. Then, there is a continuous open embedding 
\begin{equation*}
\begin{split}
\Sigma_{\Psi}: \gl2\times\left\{(\alpha,\beta,a,b)\in\mathbb R^4\mid \alpha>0,a>\max\{\frac{\alpha^2}{6},\Psi_X(\alpha,\beta,b)\}\right\}&\to\stabgh(X)\\
(g,(\alpha,\beta,a,b))&\mapsto(Z_{\alpha, \beta}^{a, b}[g], \cA_{\alpha, \beta}[g]).
\end{split}
\end{equation*}
Furthermore, if $\Psi_{X}(\alpha,\beta,b)\ge\frac{\alpha^2}{6}$ for all $(\alpha,\beta,b)\in\mathbb R_{>0}\times\mathbb R^2$, we have 
\[
\Stab^{\mathrm{Geo}\dagger}_{H}(X)\cong\gl2 \times\left\{(\alpha,\beta,a,b)\in\mathbb R^4\mid \alpha>0,a>\Psi_X(\alpha,\beta,b)\right\},
\]
as $\Stab^{\Geo\dagger}_H(X)$ is specified in \Cref{stabghd}. 
\end{theorem}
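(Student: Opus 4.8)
\emph{Strategy.} The map $\Sigma_{\Psi}$ refines the embedding $\Sigma$ of \cite{BMS}, so I would reuse its building blocks: for $\alpha>0$ the tilted heart $\Cohb(X)\subset\cD^b(X)$, the tilt-slope $\nu_{\alpha,\beta}$, the double-tilted heart $\cA_{\alpha,\beta}$ obtained by tilting $\Cohb(X)$ at $\nu_{\alpha,\beta}=0$, and the central charge $Z_{\alpha,\beta}^{a,b}$, whose imaginary part is a positive multiple of the numerator of $\nu_{\alpha,\beta}$ and whose real part is $-\ch_3^{\beta}+bH\ch_2^{\beta}+aH^{2}\ch_1^{\beta}$. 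The first step is to show that $(Z_{\alpha,\beta}^{a,b},\cA_{\alpha,\beta})$ is a Bridgeland stability condition precisely on $\{\alpha>0,\ a>\max\{\tfrac{\alpha^{2}}{6},\Psi_X(\alpha,\beta,b)\}\}$. By the criterion of \cite{BMT,BMS} this reduces to the positivity $Z_{\alpha,\beta}^{a,b}(E)\in\overline{\mathbb H}\setminus\mathbb R_{\ge0}$ for all $0\ne E\in\cA_{\alpha,\beta}$, plus the support property. Writing $\cA_{\alpha,\beta}$ as the extension closure of the $\nu_{\alpha,\beta}$-semistable $F\in\Cohb(X)$ with $\nu_{\alpha,\beta}(F)>0$ (including the torsion objects) and of the shifts $F[1]$ with $\nu_{\alpha,\beta}(F)\le0$, positivity is automatic off the locus where $\operatorname{Im}Z_{\alpha,\beta}^{a,b}$ vanishes; on that locus it becomes the numerical inequality $\ch_3^{\beta}(F)-bH\ch_2^{\beta}(F)<a\,H^{2}\ch_1^{\beta}(F)$ for tilt-semistable $F$ with $\nu_{\alpha,\beta}(F)=0$ and $H^{2}\ch_1^{\beta}(F)>0$, together with the condition $a>\tfrac{\alpha^{2}}{6}$, which takes care of the objects with $H^{2}\ch_1^{\beta}=0$ and, jointly with \Cref{Bog}, of the support property, exactly as in the $b=0$ case of \cite{BMT,BMS}. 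By the very definition of $\Psi_X$ as the $\limsup$ of $\tfrac{\ch_3^{\beta}(F)-bH\ch_2^{\beta}(F)}{H^{2}\ch_1^{\beta}(F)}$ along $\mu\to0$, the condition $a>\Psi_X(\alpha,\beta,b)$ is exactly what forces the displayed inequality; here \Cref{Bog} enters to guarantee that the $\limsup$ is finite and that it also bounds the ratio on the tilt-semistable objects lying on the wall $\nu_{\alpha,\beta}=0$ itself.

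\emph{Continuity, openness, and geometricity.} Each $(Z_{\alpha,\beta}^{a,b},\cA_{\alpha,\beta})$ is geometric, since every skyscraper $\cO_x$ is a simple object of $\cA_{\alpha,\beta}$ with $Z_{\alpha,\beta}^{a,b}(\cO_x)=-1$, so it lands in $\stabgh(X)$. Continuity and the open-embedding property of $\Sigma_{\Psi}$ then follow as in \cite{BMS}: $Z_{\alpha,\beta}^{a,b}$ depends continuously on $(\alpha,\beta,a,b)$, the assignment is $\gl2$-equivariant, a parameter point is recovered locally from the central charge, and Bridgeland's deformation theorem gives a local homeomorphism, which is injective. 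Finally $\Psi_X(\alpha,\beta,b)\le\tfrac{\alpha^{2}}{6}+\tfrac{\alpha}{2}|b|$ (again a consequence of \Cref{Bog} applied to the ratio), so the domain of $\Sigma_{\Psi}$ contains $\mathfrak B$ and the two maps agree there; thus $\Sigma_{\Psi}$ genuinely enlarges $\Sigma$.

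\emph{The second assertion.} Assume $\Psi_X(\alpha,\beta,b)\ge\tfrac{\alpha^{2}}{6}$ for all $(\alpha,\beta,b)$. Then $\max\{\tfrac{\alpha^{2}}{6},\Psi_X(\alpha,\beta,b)\}=\Psi_X(\alpha,\beta,b)$, the domain of $\Sigma_{\Psi}$ is $\gl2\times\{(\alpha,\beta,a,b):\alpha>0,\ a>\Psi_X(\alpha,\beta,b)\}$, and $\Sigma_{\Psi}$ is a homeomorphism onto its image. That this image is contained in $\Stab^{\Geo\dagger}_H(X)$ is built into the description of the latter in \Cref{stabghd}. For the reverse inclusion, take $\sigma\in\Stab^{\Geo\dagger}_H(X)$; by the classification of geometric stability conditions on $\cD^b(X)$ with respect to $(\Lambda_H,\lambda_H)$ from \cite{BMS}, after acting by a suitable element of $\gl2$ we may assume $\sigma=(Z_{\alpha,\beta}^{a,b},\cA_{\alpha,\beta})$ with $\alpha>0$. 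It remains to check $a>\Psi_X(\alpha,\beta,b)$: if instead $a\le\Psi_X(\alpha,\beta,b)$, the definition of the $\limsup$ produces tilt-semistable $F$ with $\nu_{\alpha,\beta}(F)\to0$ and $\tfrac{\ch_3^{\beta}(F)-bH\ch_2^{\beta}(F)}{H^{2}\ch_1^{\beta}(F)}$ at least $a$ in the limit, and the associated objects of $\cA_{\alpha,\beta}$ contradict either the positivity of $Z_{\alpha,\beta}^{a,b}$ or its support property. Hence $\sigma$ lies in the image of $\Sigma_{\Psi}$, which finishes the identification.

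\emph{Main obstacle.} The crux is the identification of $\Psi_X$ with the sharp positivity threshold. Sufficiency of $a>\Psi_X(\alpha,\beta,b)$ requires showing that the $\limsup$ over slopes $\mu\to0$ already controls the tilt-semistable objects with $\nu_{\alpha,\beta}$ exactly $0$, and that this $\limsup$ is finite so that no phases accumulate at the boundary ray and the support property is preserved. The converse needed in the second assertion requires that $a\le\Psi_X(\alpha,\beta,b)$ genuinely destroys the stability condition, which again rests on a careful analysis of tilt-semistable objects as their slope tends to $0$. In both directions the generalized Bogomolov--Gieseker inequality of \Cref{Bog} is the essential input.
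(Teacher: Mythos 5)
Your outline of the positivity and Harder--Narasimhan steps matches the paper: positivity reduces, for tilt-semistable $F$ with $\nu_{\alpha,\beta}(F)=0$ and $H^2\ch_1^{\beta}(F)>0$, to $\ch_3^{\beta}(F)-bH\ch_2^{\beta}(F)<aH^2\ch_1^{\beta}(F)$, which is exactly where the definition of $\Psi_X$ enters, and the paper likewise quotes \cite{BMT} for these two claims and \cite[Proposition 8.10]{BMS} for continuity and openness. The genuine gap is the support property on the enlarged region. You assert that it follows from $a>\frac{\alpha^2}{6}$ together with \Cref{Bog} ``exactly as in the $b=0$ case of \cite{BMT,BMS}'', but this is precisely where the BMS argument does not carry over: the forms $Q^{\beta}_K$ of \Cref{thm:const} are negative definite on $\Ker Z^{a,b}_{\alpha,\beta}$ only under the stronger constraint $a>\frac{\alpha^2}{6}+\frac{\alpha}{2}|b|$ defining $\mathfrak{B}$, so on $\mathfrak{B}_{\Psi}\setminus\mathfrak{B}$ (the whole point of the refinement, when $\Psi_X(\alpha,\beta,b)<\frac{\alpha^2}{6}+\frac{\alpha}{2}|b|$) no such form alone certifies the support property, and your ``Main obstacle'' paragraph names this difficulty without resolving it.

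The paper's proof supplies exactly the missing ingredients. First, \Cref{bound} gives a quadratic bound $\Psi_{X,\nu}(\alpha,\beta,b)\le\Xi_{\alpha,\beta,b}(\nu)$ valid at \emph{all} slopes $\nu$, not just the limit $\nu\to0$ that defines $\Psi_X$; controlling the ratio only at $\nu=0$ is not enough for a support-type quadratic form. Second, it introduces the new form $S_{\delta}=\delta^{-1}\bigl(H\ch_2^{\beta}-\frac{\alpha^2}{2}H^3\ch_0^{\beta}\bigr)^2-H^2\ch_1^{\beta}\bigl(\ch_3^{\beta}-bH\ch_2^{\beta}-(a-\delta)H^2\ch_1^{\beta}\bigr)$, built from the central charge itself, and uses $\Xi$ to show $S_{\delta}\ge0$ on tilt-semistable objects for $\delta$ small; since $S_{\delta}$ is only negative \emph{semi}-definite on $\Ker Z^{a,b}_{\alpha,\beta}$, it is then perturbed to $S_{\delta,\epsilon}=S_{\delta}+\epsilon Q^{\beta}_{\frac12(\alpha^2+6a)}$, with a compactness argument on the unit sphere of the kernel to choose $\epsilon>0$ making it negative definite there while \Cref{thm:const} preserves nonnegativity on semistable objects. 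Your proof of the ``moreover'' statement (producing, when $a\le\Psi_X(\alpha,\beta,b)$, tilt-semistable objects violating positivity or the support property) is the right idea and is essentially what the paper does, but it too ultimately rests on having the sharp support-property analysis above, so the proposal as written does not establish the theorem on the new part of the region.
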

From \Cref{bound}, we observe that $\Psi_X(\alpha,\beta,b)\le\frac{\alpha^2}{6}+\frac{\alpha}{2}|b|$. Hence, this result provides a refinement of the construction in \cite{BMS}. Furthermore, we study these stability condition spaces from a homological standpoint. The global dimension function on stability condition spaces, as introduced in \cite{Qiu20,IQ1}, serves as a key tool for our analysis. This function was initially developed to demonstrate the existence of $q$-deformations of stability conditions in \cite{IQ1}.
\begin{definition}[\Cref{def:gldim}]
The global dimension of a stability condition $\sigma=(Z,\cP)$ on a triangulated category $\mathcal D$ is defined as
\[
\mathrm{gldim}\,\sigma:=\mathrm{sup}\{\phi'-\phi\,|\,\mathrm{Hom}(\mathcal P(\phi),\mathcal P(\phi'))\ne0\}.
\]
\end{definition}
Our second result shows that the global dimension of stability conditions for $\p3$ constructed in \cite{BMS} is 3: 
\begin{theorem}[{\Cref{main2}}]
There is an injection
\[
\iota_3:\left\{(Z_{\alpha,\beta}^{a,b},\cA_{\alpha,\beta})[g]\in\Stab(\p3)\mid (\alpha,\beta,a,b)\in\mathfrak{B},g\in\gl2\right\} \hookrightarrow\mathrm{gldim}^{-1}(3).
\]
\end{theorem}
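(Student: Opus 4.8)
Injectivity of $\iota_3$ requires nothing new: the map is the restriction of the open embedding $\Sigma$ recalled in \Cref{sec:construct}, so distinct parameters $(g,(\alpha,\beta,a,b))$ give distinct points of $\Stab(\p3)$. All the content is the identity $\gldim\,(Z_{\alpha,\beta}^{a,b},\cA_{\alpha,\beta})[g]=3$ for every $(\alpha,\beta,a,b)\in\mathfrak B$ and every $g\in\gl2$. I would prove this by combining a lower bound valid for \emph{all} geometric stability conditions with an upper bound for the distinguished slice, and then reducing the $\gl2$-factor to that slice.

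First I would dispose of $\gl2$. The shift $[1]$ is central in $\gl2$ (its $\GL^+_2(\bR)$-part is a rotation, which commutes with everything), so any $g\in\gl2$ acts on phases by an increasing homeomorphism $f_g\colon\bR\to\bR$ with $f_g(\psi+1)=f_g(\psi)+1$, hence $f_g^{-1}(\psi+3)=f_g^{-1}(\psi)+3$. Because $3$ is an integer, this shows that the property ``$\mathrm{Hom}(\mathcal P(\phi),\mathcal P(\phi'))\neq0\Rightarrow\phi'\le\phi+3$'' is preserved under the $\gl2$-action; so $\gldim\le3$ passes from a representative to its whole orbit, and it suffices to prove it for $g=\mathrm{id}$. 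The bound $\gldim\ge3$ is automatically $\gl2$-invariant, since skyscrapers stay stable and $[3]$ commutes with $g$.

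For the lower bound: every $\sigma$ in the image lies in $\stabgh(\p3)$, so for a closed point $x\in\p3$ the skyscraper $\mathcal{O}_x$ is $\sigma$-stable, say of phase $\phi_0$ (all skyscrapers sharing this phase). Since $\dim\p3=3$,
\[
\mathrm{Hom}\bigl(\mathcal{O}_x,\mathcal{O}_x[3]\bigr)=\mathrm{Ext}^3_{\p3}(\mathcal{O}_x,\mathcal{O}_x)\cong\bC\neq0 ,
\]
and $\mathcal{O}_x[3]\in\mathcal P(\phi_0+3)$, whence $\gldim\,\sigma\ge(\phi_0+3)-\phi_0=3$; in particular $\gldim\,\sigma\neq-\infty$, and the remaining issue is the upper bound.

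For the upper bound on the slice $\sigma=(Z_{\alpha,\beta}^{a,b},\cA_{\alpha,\beta})$ with $(\alpha,\beta,a,b)\in\mathfrak B$, I would follow the pattern of \cite{BMT,BMS,Moz22}. Up to shift, every $\sigma$-semistable object is a $\sigma$-semistable object of $\cA_{\alpha,\beta}$, whose phases lie in $(0,1]$; so it is enough to show that for $\sigma$-semistable $E,F\in\cA_{\alpha,\beta}$ one has $\phi(F)+k-\phi(E)\le3$ whenever $\mathrm{Ext}^k(E,F)\neq0$. As $\phi(F)-\phi(E)\in(-1,1)$, this splits into: (i) $\mathrm{Ext}^k(E,F)=0$ for $k\ge4$; and (ii) $\mathrm{Ext}^3(E,F)\neq0\Rightarrow\phi(F)\le\phi(E)$. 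Part (i) I expect to be routine: $\cA_{\alpha,\beta}$ is a double tilt of $\Coh(\p3)$, so objects of $\cA_{\alpha,\beta}$ have coherent cohomology in only three consecutive degrees, and Serre duality converts the surviving top contributions into $\mathrm{Hom}$'s between sheaves whose slopes are separated by the tilting torsion pairs (twisted by $\mathcal{O}(-4)$), which vanish. Part (ii) is the crux. By Serre duality $\mathrm{Ext}^3(E,F)\cong\mathrm{Hom}\bigl(F,E\otimes\mathcal{O}(-4)\bigr)^{\vee}$, so (ii) says that the Serre functor $-\otimes\mathcal{O}(-4)[3]$ raises the maximal $\sigma$-phase by at most $3$, i.e. $\phi^{+}_{\sigma}\bigl(E(-4)\bigr)\le\phi_{\sigma}(E)$ for $\sigma$-semistable $E$. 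I would attack this via the twist identities $\Cohb(\p3)(-1)=\mathrm{Coh}^{\beta+1}(\p3)$ and $\cA_{\alpha,\beta}(-1)=\cA_{\alpha,\beta+1}$ (up to the induced shift of $(a,b)$), which turn $E\mapsto E(-4)$ into four unit steps in the $\beta$-direction, and then by showing each such step does not increase phases --- a monotonicity of the family $Z_{\alpha,\beta}^{a,b}$ in $\beta$ that should be controlled by the Bogomolov--Gieseker inequality of \Cref{Bog} together with the defining inequality $a>\frac{\alpha^{2}}{6}+\frac{\alpha}{2}|b|$ of $\mathfrak B$ --- equivalently, by \Cref{main1}, the bound $a>\max\{\alpha^{2}/6,\Psi_{\p3}(\alpha,\beta,b)\}$, which is exactly what makes these stability conditions geometric and keeps the skyscrapers at the top of the phase window. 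The main obstacle I anticipate is making this monotonicity step uniform over all of $\mathfrak B$ and over all semistable $E$; once it is in hand, (i) and (ii) give $\gldim\,\sigma\le3$ on the slice, the $\gl2$-reduction extends this to the whole image, and with the lower bound one concludes $\gldim\equiv3$ there, hence the injection $\iota_3$.
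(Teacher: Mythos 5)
Your outline follows essentially the same route as the paper: skyscraper sheaves give $\gldim\ge3$, and the upper bound is reduced, via Serre duality on $\p3$, to showing that the Serre functor without its shift (tensoring by $\cO(-4)$) does not raise phases, which is attacked by deforming $\beta$ and using the identification of $\cA_{\alpha,\beta}\otimes\cO(c)$ with $\cA_{\alpha,\beta+c}$ (the paper's \Cref{Phi}). Your reduction of the $\gl2$-factor, using that $f_g$ commutes with integer translations so that integer bounds on phase gaps are preserved, is correct and is a clean alternative to the paper's handling (which instead re-runs the positivity computation for $Z[g]$, picking up a factor $\abs{g}$).

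The genuine gap is the monotonicity step itself, which you leave at the level of ``should be controlled by the Bogomolov--Gieseker inequality'' and flag as the main obstacle: this is exactly the technical heart of the proof, and nothing in your outline supplies it. The paper establishes it in two parts. First, \Cref{Zieq}: an explicit square-completion showing that along the family $Z_t=Z^{a,b}_{\alpha,\beta-tc}$, $c\ge0$, one has $\Im\bigl(Z_t'(E)\overline{Z_t(E)}\bigr)|_{t=0}\ge0$ for every $Z_0$-semistable $E$; this uses the quadratic form $Q^{\beta}_{\frac12(\alpha^2+6a)}\ge0$ of \Cref{thm:const} (the BMS support-property inequality, valid for $\sigma^{a,b}_{\alpha,\beta}$-semistable objects, not merely tilt-semistable ones) together with the defining inequality $a>\frac{\alpha^2}{6}+\frac{\alpha}{2}\abs{b}$ of $\mathfrak B$, so it is not an immediate consequence of \Cref{Bog} alone. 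Second, an appeal to \cite[Theorem 3.12]{Moz22}, which converts this infinitesimal positivity into the global statement $\phi^{-}_{\sigma^{a,b}_{\alpha,\beta-c}}(F)\ge\phi_{\sigma^{a,b}_{\alpha,\beta}}(F)$ along the whole path; this wall-crossing input is needed because $F$ may be destabilized as $\beta$ moves, so a naive ``each unit step does not increase phases'' argument with $F$ fixed and semistable does not go through. With these two ingredients, \Cref{Phi} and Serre duality give $\mathrm{Hom}(E,F)=0$ whenever $\phi(F)>\phi(E)+3$ for semistable $E,F$, which handles every phase gap at once; in particular your separate item (i), the vanishing of $\mathrm{Ext}^{k}$ for $k\ge4$ between semistable heart objects (which you also leave unproved, calling it routine), is subsumed and needs no cohomological-amplitude argument. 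Without a proof of (a) the infinitesimal positivity and (b) its globalization across walls, your proposal does not establish the upper bound $\gldim\le3$.
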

\subsection{Contractibility of stability condition spaces}
Exploring the homotopy type of stability condition spaces has become a fascinating area of research. It is hypothesized that if the a stability condition space is nonempty, it should be homotopy discrete, but this remains a distant goal in terms of proof. This area has been richly investigated, including contributions from \cite{B08,HMS,BM11,HKK,Qiu15, QW, DK16a,DK16b,DK19, BQS, FLZ,D22}. Recent research has concentrated on two particular open subspaces: the algebraic stability condition spaces and the geometric stability condition spaces. In many cases, the union of algebraic and geometric stability condition spaces form a connected component of the whole space.  Based on \cite{Li17}, we formulate a conjecture concerning the contractibility of a principal connected component $\stabp\subset\Stab(\p3)$ that contains image of $\Sigma_{\Psi}$ for $\p3$: 
\begin{conjecture}[\Cref{mian3}]
The principal connected compnent $\stabp\subset\Stab(\p3)$ is the union of geometric and algebraic stability conditions and is contractible.
\[
\stabp=\stabg\bigcup\staba.
\]
\end{conjecture}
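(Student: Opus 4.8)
## Proof Proposal

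The plan is to break the conjecture into two parts: first, the set-theoretic identity $\stabp = \stabg \cup \staba$, and second, the contractibility of this union. For the first part, the strategy follows the philosophy of \cite{Li17}: one shows that the principal component is exhausted by the images of the geometric and algebraic charts, so that no "new" stability conditions appear on the boundary between them. Concretely, I would start from the open embedding $\Sigma_\Psi$ of the preceding theorem, whose image lies in $\stabg$, and analyze what happens as a stability condition in $\stabg$ degenerates: the key technical input is to understand the wall-and-chamber structure and show that every wall separating a geometric chamber from the rest of the principal component is crossed into the algebraic region $\staba$ generated by the exceptional collection $\cO, \cO(1), \cO(2), \cO(3)$ on $\p3$. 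One then needs that $\stabg$ and $\staba$ are each connected, that their union is open and closed in $\stabp$ (openness from both being open subsets, closedness from the exhaustion argument), and hence equals the connected component $\stabp$.

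For the second part, I would attempt to build a deformation retraction of $\stabp$ onto a point, or more realistically onto the orbit of a single algebraic heart under the $\gl2$-action (which is itself contractible since $\gl2 \simeq \CC$). The approach in \cite{Li17} for $\p2$ was to use the $\gl2$-action together with an explicit flow that pushes every geometric stability condition toward the algebraic locus, then contract within the algebraic region using the combinatorics of the exceptional collection. On $\p3$ the algebraic part $\staba$ is governed by tilts of the heart generated by a strong full exceptional collection of length $4$, and one expects a cell-like decomposition indexed by the Ext-quiver with relations; contractibility of such spaces has been established in related situations (e.g.\ \cite{QW, BQS}), so the plan is to import those techniques. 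The geometric part is covered by $\Sigma_\Psi$, whose domain is manifestly contractible (a product of $\gl2$ with a convex region cut out by $a > \max\{\alpha^2/6, \Psi_X(\alpha,\beta,b)\}$, which is convex once one knows $\Psi_X$ behaves suitably in $a$), so the retraction can be arranged to be compatible on the overlap.

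The main obstacle, and the reason this is stated as a conjecture rather than a theorem, is the first part: proving that $\stabp$ is \emph{entirely} exhausted by geometric and algebraic stability conditions. This requires a complete classification of the walls bounding $\stabg$ inside $\Stab(\p3)$ and a verification that crossing any such wall lands in $\staba$ — equivalently, that the only ways a geometric stability condition can degenerate are the ones already visible in the $\BMS$-type construction. On $\p2$ this was feasible because the rank of the relevant lattice is $3$ and the numerical types of destabilizing objects are highly constrained; on $\p3$ the lattice has rank $4$, the tilt-stability Bogomolov-Gieseker inequality of \Cref{Bog} is needed even to control the second tilt, and the function $\Psi_X$ enters precisely at the boundary, so the boundary analysis becomes substantially more delicate. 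A secondary difficulty is that one must rule out the existence of further connected components meeting neither $\stabg$ nor $\staba$ within the "principal" region — this is implicitly part of defining $\stabp$ correctly — and verify that $\Sigma_\Psi$ together with the algebraic charts glue to an atlas whose total space is connected. I would expect the contractibility (second part) to follow with comparatively standard arguments once the exhaustion is in place, so the entire weight of the conjecture rests on the boundary/wall analysis between the geometric and algebraic loci.
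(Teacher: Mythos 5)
The statement you were asked about is \Cref{mian3}, which the paper itself states only as a conjecture: there is no proof in the paper to compare against, only a surrounding remark sketching what would be needed (a full understanding of exceptional collections in $\cD^b(\p3)$, a complete description of $\stabg$, and a contraction via the global dimension function in the spirit of \cite{FLLQ}). Your proposal is, accordingly, a research plan rather than a proof, and you are candid about this; as a plan it is broadly consistent with the paper's own outlook (exhaustion of $\stabp$ by $\stabg\cup\staba$ following \cite{Li17}, then a retraction). But it cannot be accepted as a proof, and two of your intermediate assertions are themselves open in the paper. First, you say ``the geometric part is covered by $\Sigma_\Psi$'': the paper only \emph{expects} $\Im\Sigma_\Psi=\stabg$, with partial evidence from \Cref{partialgeo}, and in fact argues (via \Cref{main2} and the analogy with \cite{FLLQ}, where the global dimension on $\Stab^{\Geo}(\mathbb P^2)$ is non-constant) that the BMS-type chart $\Im\Sigma$ is likely a \emph{proper} subset of $\stabg$; so even describing the geometric locus is unresolved, and convexity of the region $a>\max\{\alpha^2/6,\Psi_X(\alpha,\beta,b)\}$ in your sense is not established since $\Psi_X$ is only bounded (\Cref{bound}) and computed in special cases (\Cref{abelianfold} and the integer-parameter case for $\p3$). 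Second, your wall-crossing exhaustion step --- that every degeneration of a geometric stability condition lands in the algebraic region generated by $\cO,\dots,\cO(3)$ --- is exactly the missing content of the conjecture; the paper supplies no classification of walls bounding $\stabg$ in the rank-$4$ lattice $\Lambda_H$, only the boundary criterion of \Cref{partialgeo} and the statement $\partial\Theta_{\cE}\subset\staba$ for algebraic cells.

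One genuine difference in route worth noting: for contractibility you propose an explicit deformation retraction onto a $\gl2$-orbit of an algebraic heart, importing techniques from \cite{QW,BQS}, whereas the paper anticipates contracting $\stabp$ ``with respect to its global dimension function'' following \cite{FLLQ}, i.e.\ flowing along $\gldim$ rather than retracting onto the algebraic locus directly. Your \Cref{main2} computation ($\gldim\equiv 3$ on $\Im\Sigma$) would feed naturally into the paper's intended $\gldim$-based contraction, so if you pursue this, aligning your second step with that mechanism is likely more promising than a hand-built retraction onto $\Theta_{\cE}$.
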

For further exploration, a deeper understanding of full exceptional collections in $\cD^b(\p3)$ is required, as well as a complete description of $\stabg$. Similar to the non-constant global dimension of $\Stab^{\Geo}(\mathbb P^2)$ by \cite{FLLQ}, the global dimension of $\stabg$ is not expected always to be 3, which suggests that the subset contructed by \cite{BMS} may not be $\stabg$ by \Cref{main2}. The space $\Im\Sigma_{\Psi}$ from \Cref{main1} is expected to correspond to $\stabg$, with partial evidence from \Cref{partialgeo}. Following \cite{FLLQ}, it is also expected that $\stabp$ is contractible with respect to its global dimension.

\subsection{Contents}
This article is structured as follows:
In \Cref{sec:pre}, we review the concepts of Bridgeland stability conditions, the global dimension function, as well as slope and tilt stability.
In \Cref{sec:construct}, we construct a subset of the space of stability conditions for any projective threefold equipped with an ample polarization that satisfies a specific Bogomolov-Gieseker inequality.
In \Cref{sec.gldim}, we show that the global dimension of the subspace of stability conditions for $\mathbb{P}^3$ constructed in \cite{BMS} is 3, and we construct a family of stable vector bundles with respect to stability conditions on $\mathcal{D}^b(\mathbb{P}^3)$ in the large volume limit.
In \Cref{sec:algebr-stab-cond}, we formulate a conjecture concerning the contractibility of the principal connected component of $\mathrm{Stab}(\mathbb{P}^3)$.

\subsection{Notations}

\begin{longtable}{c|c}
\caption{Notations} \\
      $\mathcal D$   & a triangulated category \\
       $X$  & a smooth projective variety over $\mathbb C$\\
    $\mathcal D^b(X)$    & the bounded derived category of coherent sheaves on $X$\\
    $K(\mathcal D),K(X)$ & the Grothendieck group of $\mathcal D$, resp. $\mathcal D^b(X)$\\
    $K_{\num}(\mathcal D),K_{\num}(X)$ & the numerical Grothendieck group of $\mathcal D$, resp. $\mathcal D^b(X)$\\
    $\mathrm{Stab}(\mathcal D),\mathrm{Stab}(X)$ & the space of numerical Bridgeland stability conditions on $\mathcal D,\mathcal D^b(X)$\\
$\mathrm{Stab}_H(X)$ & the space of numerical Bridgeland stability conditions on $\mathcal D^b(X)$ \\
&with respect to $(\Lambda_H,\lambda_H)$\\
    $\mathrm{Stab}^{\mathrm{Geo}}(X)$ & the space of geometric numerical stability conditions on $\mathcal D^b(X)$\\
    $\mathrm{Stab}^{\mathrm{Alg}}(\mathcal X)$ & the space of algebraic numerical stability conditions on $\mathcal D^b(X)$\\
$\ch(E)$ & the Chern character of an object $E\in\cD^b(X)$ \\
    $\mathrm{NS}(X)$ & the N\'{e}ron-Severi group of $X$\\
    $\mathrm{Amp}_{\mathbb R}(X)$ & the ample cone inside $\mathrm{NS}_{\mathbb R}(X)$\\
    \label{notation}
\end{longtable}

\subsection*{Acknowledgement}
We appreciate the lecture on Bogomolov-Gieseker inequalities by Mao Sheng and valuable discussions with Bowen Liu. We are also thankful to Yuwei Fan and Takumi Otani for their helpful comments. Dongjian Wu is grateful to his supervisor Yu Qiu for his continuous support, encouragement and patience throughout his research. Nantao Zhang is grateful to his supervisor Will Donovan for encouragement and various suggestions.

\section{Preliminaries}
\label{sec:pre}
\subsection{Review: Bridgeland Stability conditions}
We begin by reviewing the concept of Bridgeland stability conditions on triangulated categories in \cite{B07}.  In our context, we assume that for the Grothendieck group $K(\mathcal D)$ of a triangulated category $\mathcal D$ is free of finite rank, i.e. $K(\mathcal D)\cong\mathbb Z^{\oplus n}$ for some $n$. 

A \emph{Bridgeland pre-stability condition} $\sigma=(Z, \mathcal P)$ on a triangulated category $\mathcal D$ is characterized by a group homomorphism $Z: K(\mathcal D)\to \mathbb C$, termed the \emph{central charge}, and a collection of full additive subcategories $\mathcal P(\phi)\subset \mathcal D$ for each $\phi\in \mathbb R$, called the \emph{slicing}. This pair is subject to the following axioms:

\begin{enumerate}
\item[(a)] if $0\ne E\in \mathcal P(\phi)$, then $Z(E) \in \mathbb R_{>0}\cdot{\rm exp}(\textbf i\pi\phi)$,
\item[(b)] for all $\phi\in \mathbb R$, $\mathcal P(\phi +1) = \mathcal P(\phi)[1]$,
\item[(c)] if $\phi_1>\phi_2$ and $A_i\in \mathcal P(\phi_i)\,(i=1,2)$, then ${\rm Hom}_{\mathcal D}(A_1, A_2) = 0$,
\item[(d)] for $0\neq E\in \mathcal D$, there is a finite sequence of real numbers
\[
\phi_1>\phi_2>\dots>\phi_m
\]
and a collection of triangles called \emph{Harder-Narasimhan filtration}
$$0 =
\xymatrix @C=5mm{
 E_0 \ar[rr]   &&  E_1 \ar[dl] \ar[rr] && E_2 \ar[dl]
 \ar[r] & \dots  \ar[r] & E_{m-1} \ar[rr] && E_m \ar[dl] \\
& A_1 \ar@{-->}[ul] && A_2 \ar@{-->}[ul] &&&& A_m \ar@{-->}[ul]
}
= E$$
with $A_i\in\mathcal P(\phi_i)$ for all $1\leq i\leq m$.
\end{enumerate}

Let $E$ be a nonzero object in $\mathcal D$ that admits a Harder-Narasimhan filtration as described in axiom (d). We associate two numbers with $E$: $\phi^+_{\sigma}(E):=\phi_1$ and $\phi^-_{\sigma}(E):=\phi_m$, where $\phi_1$ and $\phi_m$ are the phases from axiom (d). An object $E\in \mathcal P(\phi)$ for some $\phi\in \mathbb R$ is called semistable, and in such a case, $\phi = \phi^{\pm}_{\sigma}(E)$. Moreover, if $E$ is a simple object in $\mathcal P(\phi)$, it is said to be stable. We define $\mathcal P(I)$ for an interval $I$ in $\mathbb R$ as
 \[
 \mathcal P(I) = \{E\in \mathcal D\ |\ \phi^{\pm}_{\sigma}(E)\in I\}\cup\{0\}.
 \]
Consequently, for any $\phi\in \mathbb R$, both $\mathcal P[\phi, \infty)$ and $\mathcal P(\phi, \infty)$ are t-structures in $\mathcal D$.

Additionally, we restrict our attention to Bridgeland pre-stability conditions that fulfill the \emph{support property} (cf. \cite{KS}). A Bridgeland pre-stability condition $\sigma=(Z,\mathcal P)$ satisfies the support property (with repsect to $(\Lambda,\lambda)$) if 
 \begin{enumerate}
     \item $Z$ factors via a finite rank lattice $\Lambda$, i.e. $Z:K(\mathcal D)\xrightarrow{\lambda}\Lambda\to\mathbb C$, and 
     \item there exists a quadratic form $Q$ on $K_{\num}\otimes\mathbb R$ such that 
     \begin{enumerate}
         \item[(a)] $\Ker\,Z$ is negative definite with respect to $Q$, and 
         \item[(b)] every $\sigma$-semistable object $E\in\mathcal D$ satisfies $Q(\lambda(E))\ge0$. 
     \end{enumerate}
 \end{enumerate}
 A Bridgeland pre-stability condition that satisifes the support property is referred to as a \emph{Bridgeland stability condition}. If $\lambda$ factors through $K_{\num}(\mathcal D)$, $\sigma$ is called a \emph{numerical Bridgeland stability condition}. 
 
  The set of stability conditions with respect to $(\Lambda,\lambda)$ is denoted by $\mathrm{Stab}_{\Lambda}(\mathcal D)$. Unless otherwise specified, we will assume that all Bridgeland stability conditions are numerical. The set of numerical stability conditions on $\mathcal D$ is denoted by $\mathrm{Stab}(\mathcal D)$.

As described in \cite{B07}, $\mathrm{Stab}(\mathcal D)$ possesses a natural topology induced by the generalized metric
\[
d(\sigma_1,\sigma_2)=\sup\limits_{0\ne E\in\mathcal D}\left\{|\phi_{\sigma_2}^-(E)-\phi_{\sigma_1}^-|, |\phi^+_{\sigma_2}(E)-\phi^+_{\sigma_1}(E)|, \left|\mathrm{log}\frac{m_{\sigma_2}(E)}{m_{\sigma_1}(E)} \right| \right\}.
\]
\begin{theorem}\cite[Theorem 1.2]{B07}
\label{deformation}
The space of stability conditions $\mathrm{Stab}(\mathcal D)$ has the structure of a complex manifold, and the map
\[
\mathrm{Stab}(\mathcal D)\to\mathrm{Hom}_{\mathbb Z}(K_{\num}(\mathcal D),\mathbb C)
\]
that sends a stability condition to its central charge is a local isomorphism.
\end{theorem}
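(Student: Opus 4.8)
The plan is to follow Bridgeland's deformation argument. Write $\cZ\colon\mathrm{Stab}(\cD)\to\hom_{\bZ}(K_{\num}(\cD),\bC)$ for the central charge map; the theorem amounts to showing that $\cZ$ is continuous, locally injective, and open with a continuous local inverse, after which the complex manifold structure is immediate. Continuity is routine: for a fixed generator $\gamma$ of the finite-rank lattice $K_{\num}(\cD)$ one writes $Z(E)=m_{\sigma}(E)\exp(\mathbf i\pi\phi_{\sigma}(E))$ on $\sigma$-semistable representatives and sums over Harder--Narasimhan factors in general, so that the bounds on phases and masses built into $d(\sigma_1,\sigma_2)$ directly control $|Z_2(\gamma)-Z_1(\gamma)|$. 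Local injectivity is the statement that if $\sigma_1=(Z,\cP_1)$ and $\sigma_2=(Z,\cP_2)$ share a central charge and $d(\sigma_1,\sigma_2)$ is small, then $\cP_1=\cP_2$: the metric confines the $\sigma_2$-HN factors of a $\sigma_1$-semistable object to a narrow band of phases, and the constraint that axiom (a) places on their arguments, together with the Hom-vanishing axiom (c), forces each such object to remain semistable of the same phase; by symmetry $\cP_1=\cP_2$.

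The substantive step is openness of $\cZ$ together with the construction of a local inverse. Fix $\sigma=(Z,\cP)$ satisfying the support property with respect to a quadratic form $Q$. The key consequence of the support property is a uniform estimate: since $\Ker Z$ is negative definite for $Q$ while $Q(\lambda(E))\ge 0$ on all $\sigma$-semistable $E$, the semistable classes stay uniformly away from $\Ker Z$, so there is $C>0$ with $\|\lambda(E)\|\le C\,|Z(E)|$ for every $\sigma$-semistable $E$. This is exactly what upgrades a merely ``locally finite'' deformation statement to genuine openness inside the finite-dimensional space $\hom_{\bZ}(K_{\num}(\cD),\bC)$: for $\epsilon$ small, \emph{every} $W$ in the honest open $\epsilon$-ball about $Z$ satisfies $W(E)\ne 0$ and $|\arg W(E)-\pi\phi_{\sigma}(E)|<\pi\eta$ simultaneously for all $\sigma$-semistable $E$, with $\eta\to 0$ as $\epsilon\to 0$, and $Q$ remains compatible with $W$ after a harmless enlargement. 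Given such a $W$, I would construct $\tau=(W,\cQ)$ by regrading: a $\sigma$-semistable $E$ of phase $\phi$ is assigned the new phase $\psi(E)$, the unique real in $(\phi-\eta,\phi+\eta)$ with $W(E)\in\bR_{>0}\exp(\mathbf i\pi\psi(E))$; an arbitrary nonzero object is handled by refining its $\sigma$-HN filtration and then merging each maximal run of consecutive semistable factors whose old phases lie within $2\eta$ into a single block. One then checks the slicing axioms (a)--(d) for $\cQ$, the support property for $(\Lambda,\lambda,Q)$ for $\tau$, and that $d(\sigma,\tau)$ is small; since $\cZ(\tau)=W$ and $\tau$ is the unique stability condition near $\sigma$ with this central charge by the previous paragraph, $\cZ$ is a local homeomorphism onto the open $\epsilon$-ball.

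Finally, the local homeomorphisms $\cZ$ form an atlas with values in $\hom_{\bZ}(K_{\num}(\cD),\bC)\cong\bC^n$, and since all the charts are restrictions of the single map $\cZ$ the transition maps are identities, hence biholomorphic, so $\mathrm{Stab}(\cD)$ is a complex manifold of dimension $n=\operatorname{rk} K_{\num}(\cD)$ on which $\cZ$ is a local isomorphism. I expect the main obstacle to be the regrading construction: verifying that each merged block is genuinely $\tau$-semistable of the asserted intermediate phase, that the merged factors really assemble into a Harder--Narasimhan filtration, and that the slices $\cQ(I)$ are locally finite and satisfy the support property. This is where the uniform bound $\|\lambda(E)\|\le C\,|Z(E)|$ is indispensable, and the verification runs by induction on the length of the Harder--Narasimhan filtration, controlling how the phases of subobjects and quotients interleave under the passage from $Z$ to $W$.
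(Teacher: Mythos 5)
The paper does not prove this statement; it is quoted from \cite{B07} (in the support-property formulation going back to Kontsevich--Soibelman and Bayer), so your attempt has to be measured against Bridgeland's deformation argument. Your overall strategy --- continuity of the central charge map, local injectivity via the generalized metric, and genuine openness in $\mathrm{Hom}_{\mathbb Z}(K_{\num}(\mathcal D),\mathbb C)$ obtained from the uniform estimate $\|\lambda(E)\|\le C\,|Z(E)|$ on $\sigma$-semistable classes that the support property provides --- is exactly the standard route, and the passage from the local homeomorphism statement to the complex manifold structure is fine.

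However, your construction of the deformed slicing has a genuine gap. You propose to regrade each $\sigma$-semistable object and then, for an arbitrary object, to merge each maximal run of consecutive $\sigma$-HN factors whose old phases lie within $2\eta$ into a single block, taking these blocks as the new semistable pieces. This fails: an extension of consecutive $\sigma$-semistable factors of nearby phases is in general not $W$-semistable, and conversely a $\sigma$-semistable object can become properly unstable for $W$, so its new HN filtration is strictly finer than the trivial one; the $W$-filtration is neither a coarsening nor a refinement of the $Z$-filtration, and no recipe that only regroups the old factors can produce it. The missing idea is Bridgeland's: for each $t$ the thin subcategory $\mathcal P((t-\eta,t+\eta))$ is quasi-abelian, $W$ restricts to a (skewed) stability function on it, and one must prove the Harder--Narasimhan property for $W$ there --- in the support-property setting this is precisely where the bound $\|\lambda(E)\|\le C\,|Z(E)|$ enters, forcing the relevant sets of classes to be controlled so that HN filtrations exist --- and then one defines $\mathcal Q(\psi)$ as the $W$-semistable objects of $W$-phase $\psi$ inside $\mathcal P((\psi-\eta,\psi+\eta))$, verifying afterwards that $\mathcal Q$ is a slicing with $d(\mathcal P,\mathcal Q)<\eta$ and that the same quadratic form still witnesses the support property for $(W,\mathcal Q)$. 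With that replacement your outline matches the standard proof; as written, the merging step is the step that would fail.
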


There is an alternative characterisation of Bridgeland stability conditions that relies on the notion of a t-structure on a triangulated category. Due to \cite{B07}, A \emph{stability function} on an abelian category $\mathcal C$ is a group homomorphism $Z : K(\mathcal C)\to \mathbb C$ that satisfies the following condition: for every nonzero objects $E\in\mathcal C$, the complex number $Z(E)$ lies in the semi-closed upper half plane
\[
\mathbb H_{+}=\{r\mathrm{exp}(i\pi\phi): r>0\ \text{and}\ 0<\phi\le1\}\subset\mathbb C.
\]

For every non-zero object $E$, we define the \emph{phase} as $\phi(E)=\frac{1}{\pi}\mathrm{arg}(Z([E]))\in(0,1]$. We say an object $E$ is $Z$-stable (resp. semistable) if $E\ne0$ and for every proper non-zero subobject $A$, we have $\phi(A)<\phi(E)$ (resp. $\phi(A)\le\phi(E)$). 

According to \cite[Proposition 5.3]{B07}, a stability condition $(Z,\mathcal P)$ on a triangulated category can be equivalently defined in terms of a pair $(\mathcal H,Z_{\mathcal H})$, where $\mathcal H$ is the heart of a bounded t-structure on $\mathcal D$ and $Z_{\mathcal H}$ is a stability function on $\mathcal H$ that satisfies the Harder-Narasimhan property. Furthermore, $(Z,\mathcal P)$ is a numerical Bridgeland stability condition if and only if $Z_{\mathcal H}$ factors via $K_{\num}(\mathcal D)$, and satisfies the support property for $Z_{\mathcal H}$-semistable objects. 

Let $\mathrm{Aut}(\mathcal D)$ denote the group of automorphisms of $\mathcal D$, $\mathrm{GL}^+_2(\mathbb R)$ be the group of elements in $\mathrm{GL}_2(\mathbb R)$ with positive determinant and let $\widetilde{\mathrm{GL}}_2^+(\mathbb R)$ be the universal cover of $\mathrm{GL}^+_2(\mathbb R)$. There exists a left action of $\mathrm{Aut}(\mathcal D)$ and a right action of the group $\widetilde{\mathrm{GL}}_2^+(\mathbb R)$ on $\mathrm{Stab}(\mathcal D)$ as described in \cite{B07}.  
For any $T\in\mathrm{Aut}(\mathcal D)$ and stability condition $\sigma=(Z,P)$, we define 
\[
T\sigma=(Z',P'),\quad Z'=ZT^{-1},\quad P'_{\phi}=TP_{\phi}.
\]

For any element $g=(T,f)\in\widetilde{\mathrm{GL}}_2^+(\mathbb R)$, we define
\[
\sigma[g]=(Z[g],P[g]), \quad Z[g]=T^{-1}Z,\quad P[g]_{\phi}=P_{f(\phi)}. 
\]
In particular, for $a+ib\in\mathbb C\subset\widetilde{\mathrm{GL}}_2^+(\mathbb R)$, we have
\[
Z[a+ib]=e^{-i\pi a+\pi b}Z, \quad P[a+ib]_{\phi}=P_{\phi+a}.
\]

\subsection{Global dimension function}
The concept of the global dimension function $\mathrm{gdim}$ on stability condition spaces is introduced in \cite{Qiu20,IQ1}. 

\begin{definition}[Global dimension]
\label{def:gldim}
The global dimension of a slicing $\cP$ on a triangulated category $\mathcal D$ is defined as
\[
\mathrm{gldim}(\mathcal P):=\mathrm{sup}\{\phi'-\phi\,|\,\mathrm{Hom}(\mathcal P(\phi),\mathcal P(\phi'))\ne0\}\in [0,+\infty].
\]
For a stability conditions $\sigma=(Z,\cP)$ on $\mathcal D$, the global dimension $\mathrm{gldim}\,\sigma$ is simply defined to be $\mathrm{gldim}\,\cP$.
\end{definition}

For an algebra $A$, let $\cP_A$ be the \emph{canonical slicing} on $\mathcal D^b(A)$, i.e. we have $\cP_A(0)=\mathrm{mod}\,A$ and $\cP_A(0,1)=\emptyset$. In this case, we have $\mathrm{gldim}\,\cP_A=\mathrm{gldim}\,A$. We observe the following facts (cf. \cite[Section 2]{IQ1}): for any triangulated category $\mathcal D$,
\begin{enumerate}
\item $\mathrm{gldim}$ is a continuous function on $\mathrm{Stab}\,\mathcal D$;
\item $\mathrm{gldim}$ is invariant under the $\mathbb C$-action and the action of $\mathrm{Aut}\,\mathcal D$.
 \end{enumerate}
Therefore, we can define a function
\[
\mathrm{gldim}: \mathrm{Aut}(\mathcal D)\backslash\mathrm{Stab}\,\mathcal D/\mathbb C\to[0,+\infty].
\]
The \emph{global dimension} $\mathrm{Gd}\,\mathcal D$ of $\mathcal D$ is given by
\[
\mathrm{Gd}\,\mathcal D:=\mathrm{inf}\,\mathrm{gldim}\,\mathrm{Stab}\,\mathcal D.
\]
Note that if $\mathrm{Stab}\,\mathcal D=\emptyset$, then $\mathrm{Gd}\,\mathcal D$ is not defined. See \cite{Kaw23} for a reference that demonstrates the nonexistence of Bridgeland stability conditions on $R$-linear triangulated categories of affine schemes with positive dimensions.

We say that a slicing $\cP$ (or $\sigma$) is $\mathrm{gldim}$-\emph{reachable} if there exist phases $\phi_1$ and $\phi_2$ such that 
\[
\mathrm{Hom}(\cP(\phi_1),\cP(\phi_2))\ne0, \quad \mathrm{gldim}\,\cP=\phi_2-\phi_1.
\]
We say $\mathcal D$ is $\mathrm{gldim}$-\emph{reachable} if there exists $\sigma$ such that $\mathrm{gldim}\,\sigma=\mathrm{Gd}\,\mathcal D$. By \cite{Qiu18,KOT}, we have the following results:
\begin{enumerate}
\item If $\mathcal D$ is the bounded derived category of the path algebra of an acyclic quiver $Q$, then $\mathcal D$ is gldim-reachable.
\item If $\mathcal D$ is the bounded derived category of the coherent sheaves on a smooth projective curve $X$ of genus $g$ (over $\mathbb C$), then $\mathrm{Gd}\,\mathcal D=1$ and 
\begin{itemize}
\item $\mathcal D$ is gldim-reachable if $g=0,1$;
\item $\mathcal D$ is not gldim-reachable if $g>1$.
\end{itemize}
\end{enumerate}

\subsection{Review: Slope stability and tilt stability}
In this section, we review the notion of slope stability and tilt stability for polarized projective threefolds to construct stability conditions \cite{BMT,M14}. Let $X$ be a smooth projective threefold with an ample polarization $H\in\mathrm{NS}(X)$. 

\subsubsection{Slope stability}
For $\beta\in\mathbb R$, we introduce a slope function $\mu_{\beta}$ for coherent sheaves on $X$ in the standard way: For $E\in\mathrm{Coh}(X)$, we set 
\begin{equation}
\mu_{\beta}=
\begin{cases}
+\infty, \quad & \mathrm{ch}_0^{\beta}(E)=0,\\
\frac{H^2\mathrm{ch}_1^{\beta}(E)}{H^3\mathrm{ch}_0^{\beta}(E)}, \quad & \text{else},
\end{cases}
\end{equation}
where $\mathrm{ch}^{\beta}(E):=e^{-\beta H}\mathrm{ch}(E)$ denotes the Chern character twisted by $B$. Explicitly:
\begin{equation*}
\begin{split}
\mathrm{ch}_0^{\beta}&=\mathrm{ch}_0;\\
\mathrm{ch}_1^{\beta}&=\mathrm{ch}_1-\beta H\mathrm{ch}_0;\\
\mathrm{ch}_2^{\beta}&=\mathrm{ch}_2-\beta H\mathrm{ch}_1+\frac{(\beta H)^2}{2}\mathrm{ch}_0;\\
\mathrm{ch}_3^{\beta}&=\mathrm{ch}_3-\beta H\mathrm{ch}_2+\frac{(\beta H)^2}{2}\mathrm{ch}_1-\frac{(\beta H)^3}{6}\mathrm{ch}_0.
\end{split}
\end{equation*}
Note that $\mu_{\beta}(\cE)=\mu(\cE)-\beta$, where $\mu(\cE)=\mu_0(\cE)$ is the classical slope stability condition function. A coherent sheaf $E$ is slope-(semi)stable (or $\mu_{\beta}$-(semi)stable) if for all proper subsheaves $F\hookrightarrow E$, we have 
\[
\mu_{\beta}(F)<(\le) \mu_{\beta}(E/F).
\]

According to the existence of Harder-Narasimhan filtrations with respect to slope-stability, there exists a \emph{torsion pair} $(\mathcal T_{\beta},\mathcal F_{\beta})$ defined as follows:
\begin{equation*}
\begin{split}
\mathcal T_{\beta}&=\{E\in\mathrm{Coh}(X): \text{ any quotient } E\twoheadrightarrow G \text{ satisfies } \mu_{\beta}(G)>0\};\\
\mathcal F_{\beta}&=\{E\in\mathrm{Coh}(X): \text{ any  subsheaf } F\hookrightarrow E\text{ satisfies } \mu_{\beta}(F)\le0\}.
\end{split}
\end{equation*}
Equivalently, $\mathcal T_{\beta}$ and $\mathcal F_{\beta}$ are the extension-closed subcategories of $\mathrm{Coh}(X)$ generated by slope-stable sheaves of positive or non-positive slope, respectively. Tilt the category $\mathrm{Coh}(X)$ with respect to this torsion pair and denote the resulting heart by $\mathrm{Coh}^{\beta}(X)=\langle\mathcal T_{\beta},\mathcal F_{\beta}[1]\rangle$. According to the general theory of torsion pairs and tilting \cite{HRS}, $\mathrm{Coh}^{\beta}(X)$ is the heart of a bounded t-structure on $\mathcal D^b(X)$.

We now state a property of objects in $\Cohb(X)$ as follows:

\begin{lemma}[{\cite[Lemma 3.2.1]{BMT}}]
\label{nuch1}
For any non-zero object $E\in\mathrm{Coh}^{\beta}(X)$, one of the following conditions holds: 
\begin{enumerate}
\item $H^2\mathrm{ch}^{\beta}_1(E)>0$;
\item $H^2\mathrm{ch}_1^{\beta}(E)=0$ and $\Im\, Z_{\alpha,\beta}(E)>0$;
\item $H^2\mathrm{ch}_1^{\beta}(E)=\Im\, Z_{\alpha,\beta}(E)=0$ and $-\Re\, Z_{\alpha,\beta}(E)>0$.
\end{enumerate}
\end{lemma}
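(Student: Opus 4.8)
The plan is to establish the trichotomy by decomposing $E$ along the standard t-structure on $\mathcal D^b(X)$ and tracking, lexicographically, the three successive invariants $H^2\mathrm{ch}_1^{\beta}$, $\Im Z_{\alpha,\beta}$ and $-\Re Z_{\alpha,\beta}$. Writing the defining short exact sequence of $\mathrm{Coh}^{\beta}(X)$,
\[
0 \to F[1] \to E \to T \to 0, \qquad F := H^{-1}(E) \in \mathcal F_{\beta},\ \ T := H^0(E) \in \mathcal T_{\beta},
\]
I would first record the standard fact that $H^2\mathrm{ch}_1^{\beta}(E) \geq 0$: since $H$ is ample, $T$ — an extension of a torsion sheaf by a sheaf all of whose $\mu_{\beta}$-HN factors have positive slope — has $H^2\mathrm{ch}_1^{\beta}(T) \geq 0$, while $F$ — torsion-free with all $\mu_{\beta}$-HN factors of non-positive slope — has $H^2\mathrm{ch}_1^{\beta}(F) \leq 0$, so $H^2\mathrm{ch}_1^{\beta}(E) = H^2\mathrm{ch}_1^{\beta}(T) - H^2\mathrm{ch}_1^{\beta}(F) \geq 0$. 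If this quantity is strictly positive we are in case (1) and nothing more is needed.

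The substantive step is the analysis of the boundary stratum $H^2\mathrm{ch}_1^{\beta}(E) = 0$, which forces $H^2\mathrm{ch}_1^{\beta}(T) = 0$ and $H^2\mathrm{ch}_1^{\beta}(F) = 0$ separately. For $T$: its torsion-free quotient lies in $\mathcal T_{\beta}$ and, being torsion-free with all HN factors of positive slope, has strictly positive $H^2\mathrm{ch}_1^{\beta}$ unless it vanishes, and its torsion subsheaf has $H^2\mathrm{ch}_1 \geq 0$ by ampleness; hence $T$ is a torsion sheaf whose support carries no divisorial component, i.e. $\mathrm{ch}_0(T) = \mathrm{ch}_1(T) = 0$ and $H\mathrm{ch}_2(T) \geq 0$. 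For $F$: vanishing of $H^2\mathrm{ch}_1^{\beta}(F)$ together with membership in $\mathcal F_{\beta}$ forces every $\mu_{\beta}$-HN factor of $F$ to have slope exactly $0$, so $F$ is a $\mu_{\beta}$-semistable (equivalently $\mu$-semistable) torsion-free sheaf with $H^2\mathrm{ch}_1^{\beta}(F) = 0$. To such an $F$ I would apply the \emph{classical} Bogomolov inequality in its Hodge-index-twisted form $(H^2\mathrm{ch}_1^{\beta}(F))^2 \geq 2\,H^3\mathrm{ch}_0(F)\cdot H\mathrm{ch}_2^{\beta}(F)$ — note this discriminant is insensitive to the $\beta$-twist, and this is the usual Bogomolov inequality for semistable torsion-free sheaves on a smooth projective variety, \emph{not} the conjectural threefold inequality of \Cref{Bog} — to conclude $H\mathrm{ch}_2^{\beta}(F) \leq 0$. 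Plugging into the tilt central charge, whose imaginary part is a positive multiple of $H\mathrm{ch}_2^{\beta}(-) - \tfrac{\alpha^2}{6}H^3\mathrm{ch}_0^{\beta}(-)$, gives $\Im Z_{\alpha,\beta}(T) \geq 0$ and $\Im Z_{\alpha,\beta}(F) \leq 0$ with equality only if $\mathrm{ch}_0(F) = 0$; hence $\Im Z_{\alpha,\beta}(E) = \Im Z_{\alpha,\beta}(T) - \Im Z_{\alpha,\beta}(F) \geq 0$. If it is strictly positive we are in case (2).

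Finally, suppose $\Im Z_{\alpha,\beta}(E) = 0$ as well. Then $\Im Z_{\alpha,\beta}(F) = 0$ forces $\mathrm{ch}_0(F) = 0$, hence $F = 0$ since $F$ is torsion-free; and $\Im Z_{\alpha,\beta}(T) = 0$ forces $H\mathrm{ch}_2(T) = 0$, which, together with effectiveness of the support class of $T$ and ampleness of $H$, forces $T$ to be $0$-dimensional. Thus $E = T$ is a nonzero sheaf of finite length $\ell > 0$ with $\mathrm{ch}^{\beta}(E) = \mathrm{ch}(E) = (0,0,0,\ell)$, and the remaining terms of $\Re Z_{\alpha,\beta}$ involve only $\mathrm{ch}_0^{\beta}$ and $\mathrm{ch}_1^{\beta}$, so $\Re Z_{\alpha,\beta}(E) = -\mathrm{ch}_3^{\beta}(E) = -\ell < 0$, i.e. $-\Re Z_{\alpha,\beta}(E) > 0$: case (3), and the three cases are plainly mutually exclusive.

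The only genuine obstacle is the equality analysis of the second paragraph — extracting the full structure of $T$ and $F$ from the single scalar identity $H^2\mathrm{ch}_1^{\beta}(E) = 0$, using the generation of $\mathcal T_{\beta}$ and $\mathcal F_{\beta}$ by slope-semistable sheaves and the ampleness of $H$ (to kill effective divisor and curve classes of zero $H$-degree), and pinning down that the relevant Bogomolov bound is the elementary twist-invariant one for $\mu$-semistable torsion-free sheaves rather than the conjectural threefold inequality of \Cref{Bog}. Everything else is routine additivity of the Chern character along the two triangles.
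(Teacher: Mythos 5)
Your proof is correct. The paper gives no argument for this lemma---it is quoted directly from \cite[Lemma 3.2.1]{BMT}---and your reconstruction (the torsion-pair decomposition $0\to F[1]\to E\to T\to 0$, positivity of $H^2\mathrm{ch}_1^{\beta}$ on $\mathcal T_{\beta}$ versus non-positivity on $\mathcal F_{\beta}$, the classical Bogomolov inequality combined with the Hodge index theorem applied to the slope-zero $\mu_{\beta}$-semistable piece $F$, and the support analysis of the torsion part $T$) is essentially the standard proof given in that reference.
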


\subsubsection{Tilt stability}

According to \cite{BMT}, we have the following slope function on $\mathrm{Coh}^{\beta}(X)$: For $E\in\mathrm{Coh}^{\beta}(X)$, define
\begin{equation*}
\nu_{\alpha,\beta}=
\begin{cases}
+\infty, \quad & \text{ if } (\alpha H)^2\mathrm{ch}_1^{\beta}(E)=0;\\
\frac{\alpha H\mathrm{ch}_2^{\beta}(E)-\frac{1}{2}(\alpha H)^3\mathrm{ch}_0^{\beta}(E)}{\alpha^2H^2\mathrm{ch}_1^{\beta}(E)}, \quad &\text{ else}.
\end{cases}
\end{equation*}

An object $E\in\Cohb(X)$ is \emph{tilt-(semi)stable} if, for all non-trivial subobjects $F\hookrightarrow E$, we have 
\[
\nu_{\alpha,\beta}<(\le)\nu_{\alpha,\beta}(E/F).
\] 

Consider the following full subcategories of $\mathrm{Coh}^{\beta}(X)$
\begin{equation*}
\begin{split}
\mathcal T'_{\alpha,\beta}&=\{E\in\mathrm{Coh}^{\beta}(X): \text{ any quotient } E\twoheadrightarrow G \text{ satisfies } \nu_{\alpha,\beta}(G)>0\};\\
\mathcal F'_{\alpha,\beta}&=\{E\in\mathrm{Coh}^{\beta}(X): \text{ any  subsheaf } F\hookrightarrow E\text{ satisfies } \nu_{\alpha,\beta}(F)\le0\}.
\end{split}
\end{equation*}
They also form a torsion pair. Then the tilt of $\mathrm{Coh}^{\beta}(X)$ with respect to $(\mathcal T'_{\alpha,\beta},\mathcal F'_{\alpha,\beta})$ provides us with the abelian category 
\[
\mathcal A_{\alpha,\beta}=\langle\mathcal T'_{\alpha,\beta},\mathcal F'_{\alpha,\beta}[1] \rangle.
\]


\section{Construction of stability conditions on polarized projective threefolds}
\label{sec:construct}

In this section, we start by revisiting some Bogomolov-Gieseker inequalities for projective threefolds. Then, we construct an explicit subset of the space of the space of stability conditions for any projective threefold with an ample polarization that satisfy the generalized Bogomolov-Gieseker inequality. This is achieved by utilizing the function $\Psi_X$ in \Cref{Psi} to refine the result in \cite{BMS}, based on the insights from \cite{BMT,BMS,FLZ,D22}.

In the following sections, the term ``a polarized threefold $(X,H)$'' refers to a smooth projective threefold equipped with an ample polarization $H$ that obeys the \emph{generalized Bogomolov-Gieseker inequality} as stated in \Cref{Bog}. We consider the lattice $\Lambda_H$ generated by vectors of the form 
\[
(H^3\mathrm{ch}_0(E),H^2\mathrm{ch}_1(E),H\mathrm{ch}_2(E),\mathrm{ch}_3(E))\in\mathbb Q^4
\] 
along with the natural map $\lambda_H:K(X)\to\Lambda_H$. According to \Cref{deformation}, the space $\Stab_H(X)$ of stability conditions on $\mathcal D^b(X)$ with respect to $(\Lambda_H,\lambda_H)$ is a four-dimensional complex manifold such that the map
\[
Z:\Stab_H(X)\to \mathrm{Hom}(\Lambda_H,\mathbb C), \quad (Z,\cP)\mapsto Z
\]
is a local isomorphism.  

\subsection{Reivew: Bogomolov-Gieseker inequalities}
We first review the classical and generalized Bogomolov-Gieseker inequalities for tilt stable objects on smooth projective threefolds with ample polarizations as follows: 

\begin{lemma}[\cite{BMT,BMS}]
\label{Bog}
Suppose that $X$ is a smooth projective threefold with an ample polarization $H$. Then any $\nu_{\alpha,\beta}$-semistable object $E\in\mathrm{Coh}^{\beta}(X)$ satisfies the {(classical) Bogomolov-Gieseker inequality}:
\[
(H\mathrm{ch}_1^{\beta}(E))^2-2H^2\mathrm{ch}_0^{\beta}(E)H\mathrm{ch}_2^{\beta}(E)\ge0.
\]
Moreover, the {generalized Bogomolov-Gieseker inequality} is expressed as
\[
\mathrm{ch}_3^{\beta}(E)\le\frac{\alpha^2 H^2}{6}\mathrm{ch}_1^{\beta}(E), \emph{\text{ for }}  \nu_{\alpha,\beta}(E)=0,
\]
which is equivalent to,
\[
Q_{\alpha^2}^{\beta}:=\alpha^2\overline{\Delta}_{H}(E)+\overline{\nabla}_{H}^{\beta}(E)\ge0,
\]
where
\[\overline{\Delta}_{H}(E) = (H^{2}\ch_{1}(E))^{2} - 2 H^{3}\ch_{0}(E) H\ch_{2}(E),\] 
and 
\[\overline{\nabla}_{H}^{\beta}(E)= 4(H \ch_{2}^{\beta}(E))^{2} - 6 H^{2} \ch_{1}^{\beta}(E) \ch_{3}^{\beta}(E).\]

\end{lemma}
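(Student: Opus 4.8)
The plan is to reduce both inequalities to the known statements in \cite{BMT} and \cite{BMS} by unwinding the definitions of the two quadratic forms. First I would prove the classical inequality. The point is that for a $\nu_{\alpha,\beta}$-semistable object $E\in\Cohb(X)$ the statement $(H\ch_1^\beta(E))^2 - 2 H^2\ch_0^\beta(E)\,H\ch_2^\beta(E)\ge 0$ is exactly the conclusion of \cite[Corollary 7.3.2]{BMT} (the Bogomolov-Gieseker inequality for tilt-semistable objects), proved there by the standard Hodge-index plus discriminant argument: one passes to the generic fiber over a suitable curve, uses that tilt-semistability is preserved, and invokes the classical Bogomolov inequality for slope-semistable sheaves on surfaces together with the Hodge index theorem. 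Since all of this is cited, I would simply record the reference and note that $\overline\Delta_H(E)$ is (up to the obvious rescaling by powers of $H$) this discriminant, so $\overline\Delta_H(E)\ge 0$ for tilt-semistable $E$.

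Next I would treat the generalized inequality. Here the content is precisely Conjecture 4.1 / the main theorem of \cite{BMS}: under the running hypothesis that $(X,H)$ satisfies the generalized BG inequality of \Cref{Bog} (which is the hypothesis, not something to be proved), any $\nu_{\alpha,\beta}$-semistable $E$ with $\nu_{\alpha,\beta}(E)=0$ satisfies $\ch_3^\beta(E)\le \frac{\alpha^2 H^2}{6}\ch_1^\beta(E)$. So the only thing left to verify is the claimed equivalence with $Q^\beta_{\alpha^2}(E):=\alpha^2\overline\Delta_H(E)+\overline\nabla^\beta_H(E)\ge 0$. I would do this in two steps. Step (a): show that for semistable $E$ with $\nu_{\alpha,\beta}(E)=0$ the inequality $\ch_3^\beta\le\frac{\alpha^2 H^2}{6}\ch_1^\beta$ combined with the classical inequality $\overline\Delta_H(E)\ge 0$ forces $Q^\beta_{\alpha^2}(E)\ge 0$; this is a direct computation using that $\nu_{\alpha,\beta}(E)=0$ means $\alpha H\ch_2^\beta(E)=\tfrac12(\alpha H)^3\ch_0^\beta(E)$, i.e. $H\ch_2^\beta(E)=\tfrac{\alpha^2}{2}H^3\ch_0^\beta(E)$, which lets one eliminate $H\ch_2^\beta$ from $\overline\nabla^\beta_H$ and combine. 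Step (b): conversely, run the standard argument of \cite{BMS} (varying $\alpha$ and using that $Q^\beta_{\alpha^2}\ge 0$ is an inequality of quadratic forms in $\alpha$ that holds for the \emph{particular} $\alpha$ with $\nu_{\alpha,\beta}(E)=0$, hence, since $\overline\Delta_H\ge 0$ always, holds for all larger $\alpha$, which recovers the pointwise generalized inequality). In practice the cleanest route is just to cite \cite[Theorem 3.4 and Lemma/Remark on $Q^\beta_{\alpha^2}$]{BMS} where this algebraic equivalence is already established, and to reproduce only the one-line computation in Step (a) for the reader's convenience.

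The main obstacle is purely bookkeeping: making sure the normalizations of $\overline\Delta_H$ and $\overline\nabla^\beta_H$ here match those of \cite{BMT,BMS} (different sources absorb the ample class $H$ and factors of $2$, $3$, $6$ differently), so that the asserted equivalence ``$\ch_3^\beta\le\frac{\alpha^2H^2}{6}\ch_1^\beta$ at $\nu_{\alpha,\beta}=0$ $\iff$ $Q^\beta_{\alpha^2}\ge 0$'' is literally correct and not off by a constant. There is no genuinely new mathematical difficulty — both inequalities are imported hypotheses/known results — so the proof is essentially: cite \cite{BMT} for the classical inequality, cite \cite{BMS} for the generalized one, and verify the stated reformulation by substituting the relation $H\ch_2^\beta(E)=\tfrac{\alpha^2}{2}H^3\ch_0^\beta(E)$ valid when $\nu_{\alpha,\beta}(E)=0$.
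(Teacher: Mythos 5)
Your proposal matches the paper's treatment: Lemma \ref{Bog} is recorded there purely as a citation of \cite{BMT} (classical inequality for tilt-semistable objects) and \cite{BMS} (the generalized inequality and its quadratic-form reformulation via $\overline{\Delta}_H$ and $\overline{\nabla}^{\beta}_H$), with no independent proof, which is exactly your plan. Your Step (a) bookkeeping indeed works — at $\nu_{\alpha,\beta}(E)=0$ the substitution $H\ch_2^{\beta}(E)=\tfrac{\alpha^2}{2}H^3\ch_0^{\beta}(E)$ gives $Q^{\beta}_{\alpha^2}(E)=6\,H^2\ch_1^{\beta}(E)\bigl(\tfrac{\alpha^2}{6}H^2\ch_1^{\beta}(E)-\ch_3^{\beta}(E)\bigr)$, so with $H^2\ch_1^{\beta}(E)\ge0$ the two inequalities are equivalent on the nose and the classical inequality is not even needed for that step.
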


Given a pair $(\alpha,\beta)\in\mathbb R_{>0}\times\mathbb R$, we consider the central charge defined as follows:
\begin{equation*}
\begin{split}
Z_{\alpha,\beta}(E)&=-\int_Xe^{-\beta H-i\alpha H}\mathrm{ch}(E)\\
&=\left(-\mathrm{ch}_3^{\beta}(E)+\frac{(\alpha H)^2}{2}\mathrm{ch}_1^{\beta}(E)\right)+i\left(\alpha H\mathrm{ch}_2^{\beta}(E)-\frac{(\alpha H)^3}{6}\mathrm{ch}_0^{\beta}(E)\right).
\end{split}
\end{equation*}

According to \cite{BMT}, the criteria for associating a stability condtion with $Z_{\alpha,\beta}$ is as follows:

\begin{theorem}[{\cite[Corollary 5.2.4]{BMT}}]
\label{standard}
Let $(\alpha,\beta)\in\mathbb R_{>0}\times\mathbb R$. The pair $(Z_{\alpha,\beta},\mathcal A_{\alpha,\beta})$ forms a stability condition on $\mathcal D^b(X)$ if and only if, for any $\nu_{\alpha,\beta}$-semistable object $E\in\mathrm{Coh}^{\beta}(X)$ satisfying 
\[
\frac{(\alpha H)^3}{6}\mathrm{ch}_0^{\beta}(E)=\alpha H\mathrm{ch}_2^{\beta}(E),
\]
we have 
\[
\mathrm{ch}_3^{\beta}(E)<\frac{(\alpha H)^2}{2}\mathrm{ch}_1^{\beta}(E).
\]
\end{theorem}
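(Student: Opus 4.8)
The plan is to invoke Bridgeland's reconstruction of a stability condition from the heart of a bounded t-structure equipped with a stability function, and then to identify exactly which objects of $\cA_{\alpha,\beta}$ can violate the upper half-plane axiom; these turn out to be precisely the shifts $F[1]$ of the $\nu_{\alpha,\beta}$-semistable objects $F\in\Cohb(X)$ with $\nu_{\alpha,\beta}(F)=0$, which is the source of the biconditional.

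First I would carry out the standard reduction. By \cite[Prop.~5.3]{B07}, $(Z_{\alpha,\beta},\cA_{\alpha,\beta})$ is a pre-stability condition exactly when $Z_{\alpha,\beta}$ restricts to a stability function on $\cA_{\alpha,\beta}$ satisfying the Harder-Narasimhan property. For the double-tilt heart $\cA_{\alpha,\beta}=\langle\cT'_{\alpha,\beta},\cF'_{\alpha,\beta}[1]\rangle$, the Harder-Narasimhan property and local finiteness are obtained from the general tilting arguments of \cite{BMT,B07} (using that $Z_{\alpha,\beta}$ factors through the finite-rank lattice $\Lambda_H$ and that tilt stability itself enjoys the Harder-Narasimhan property), and, reading ``stability condition'' in the strong sense, the support property is supplied by the generalized Bogomolov-Gieseker inequality of \Cref{Bog} via the quadratic form $Q_{\alpha^2}^{\beta}$. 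Granting these, the statement reduces to: $Z_{\alpha,\beta}(E)\in\mathbb H_+$ for every nonzero $E\in\cA_{\alpha,\beta}$ if and only if $\ch_3^{\beta}(F)<\tfrac{(\alpha H)^2}{2}\ch_1^{\beta}(F)$ for every $\nu_{\alpha,\beta}$-semistable $F\in\Cohb(X)$ on the wall cut out by the displayed equation -- equivalently, with $\nu_{\alpha,\beta}(F)=0$, in which case $H^2\ch_1^{\beta}(F)>0$ by \Cref{nuch1}.

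For the ``if'' direction I would use that the displayed equation is $\Im Z_{\alpha,\beta}(F)=0$, that on objects of $\Cohb(X)$ with $H^2\ch_1^{\beta}\neq0$ the sign of $\Im Z_{\alpha,\beta}$ agrees with that of $\nu_{\alpha,\beta}$ -- this is how the torsion pair $(\cT'_{\alpha,\beta},\cF'_{\alpha,\beta})$ is calibrated -- and that $\Re Z_{\alpha,\beta}(E)=\tfrac{(\alpha H)^2}{2}\ch_1^{\beta}(E)-\ch_3^{\beta}(E)$. Decomposing an arbitrary $0\neq E\in\cA_{\alpha,\beta}$ via its $\Cohb(X)$-cohomology $0\to\mathcal H^{-1}(E)[1]\to E\to\mathcal H^{0}(E)\to0$ and using \Cref{nuch1} to dispose of the degenerate objects with $H^2\ch_1^{\beta}=0$ (case (3), for which $-\Re Z_{\alpha,\beta}>0$), one checks that $Z_{\alpha,\beta}$ carries $\cT'_{\alpha,\beta}\setminus\{0\}$ into $\mathbb H_+$ outright, that $\Im Z_{\alpha,\beta}(F[1])\ge0$ for $F\in\cF'_{\alpha,\beta}$, and that equality $\Im Z_{\alpha,\beta}(F[1])=0$ forces $F$ to be $\nu_{\alpha,\beta}$-semistable with $\nu_{\alpha,\beta}(F)=0$, whereupon the hypothesis gives $\Re Z_{\alpha,\beta}(F[1])=\ch_3^{\beta}(F)-\tfrac{(\alpha H)^2}{2}\ch_1^{\beta}(F)<0$; since $\mathbb H_+$ is closed under addition, $Z_{\alpha,\beta}(E)\in\mathbb H_+$. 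For the ``only if'' direction, if the inequality fails for such an $F$ then $F\in\cF'_{\alpha,\beta}$, so $F[1]$ is a nonzero object of $\cA_{\alpha,\beta}$ with $Z_{\alpha,\beta}(F[1])$ real and $\ge0$, hence not in $\mathbb H_+$, so $(Z_{\alpha,\beta},\cA_{\alpha,\beta})$ cannot be a stability condition. This gives the biconditional.

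The sign bookkeeping above is routine; the genuine obstacle is the Harder-Narasimhan property (and local finiteness) of $(Z_{\alpha,\beta},\cA_{\alpha,\beta})$, because $\Cohb(X)$ is not Noetherian and $\Im Z_{\alpha,\beta}$ need not take discrete values for irrational $\beta$. I would handle this by first treating rational $\beta$, where the image of $Z_{\alpha,\beta}$ is discrete and the usual criterion applies, and then deforming, or else by running the weak-stability-function machinery of \cite{BMT} directly; once \Cref{Bog} is available, the quadratic form $Q_{\alpha^2}^{\beta}$ likewise yields the support property. The content of the criterion itself -- the identification of $\{F[1] : F\text{ tilt-semistable},\; \nu_{\alpha,\beta}(F)=0\}$ as the obstruction locus -- is exactly the elementary argument just sketched.
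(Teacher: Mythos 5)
The paper offers no proof of \Cref{standard}: it is quoted verbatim from \cite[Corollary 5.2.4]{BMT}, and your argument is essentially the argument given there --- reduce via \cite[Proposition 5.3]{B07} to checking that $Z_{\alpha,\beta}$ is a stability function on $\cA_{\alpha,\beta}$ with the Harder--Narasimhan property, observe that positivity can only fail on shifts $F[1]$ of tilt-semistable objects on the wall $\Im Z_{\alpha,\beta}=0$, and defer the HN/support machinery to \cite{BMT,BMS}. The sign bookkeeping in your ``if''/``only if'' argument is correct, and the identification of the obstruction locus is exactly the content of the criterion.

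Two caveats. First, your pivotal steps --- ``the sign of $\Im Z_{\alpha,\beta}$ agrees with that of $\nu_{\alpha,\beta}$ on $\Cohb(X)$'' and ``the displayed equation is equivalent to $\nu_{\alpha,\beta}(F)=0$'' --- are only true in the BMT normalization, where the numerator of the tilt slope is proportional to $\Im Z_{\alpha,\beta}$, i.e.\ carries the coefficient $\tfrac{(\alpha H)^3}{6}$. The paper defines $\nu_{\alpha,\beta}$ with $\tfrac12(\alpha H)^3$ (the BMS normalization), and with that literal reading your calibration claim fails on classes with $\ch_0^{\beta}<0$, so the heart cut out by this $\nu$ need not map into $\mathbb H_+$ at all; the statement becomes \cite[Corollary 5.2.4]{BMT} only after slope and central charge are put in matching normalizations (equivalently, after rescaling $\alpha$ by $\sqrt3$). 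Relatedly, the displayed equality alone also admits objects with $H^2\ch_1^{\beta}=0$ (e.g.\ skyscraper sheaves, which satisfy it but violate the inequality), so the hypothesis must be read as ``$\nu_{\alpha,\beta}(F)=0$ finite, i.e.\ $H^2\ch_1^{\beta}(F)>0$''; you silently adopt this correct reading, but it should be made explicit since it is not what \Cref{nuch1} gives you from the displayed equation. Second, the two hard inputs are deferred rather than proved: the HN property for irrational $\beta$ does not follow by ``deforming'' from the rational case (the HN property is not a limit-stable condition without support-property input), so one must run the BMT/BMS Noetherianity and deformation arguments directly; and the support property for $\sigma_{\alpha,\beta}$-semistable objects does not follow from the tilt-level inequality of \Cref{Bog} alone --- it requires the wall-crossing result quoted as \Cref{thm:const} (\cite[Lemma 8.5, Theorem 8.7]{BMS}). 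Given that \Cref{standard} is itself a citation, deferring to that machinery is reasonable, but cite it precisely rather than attributing it to \Cref{Bog}.
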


\begin{remark}
According to \cite{M14} and \cite{BMS}, the generalized Bogomolov-Gieseker inequality holds for $\p3$ and polarized abelian threefolds. Let $(X,H)$ be a polarized abelian threefold. Then by \Cref{standard}, we have that for any pair $(\alpha,\beta)\in\mathbb R_{>0}\times\mathbb R$, $(Z_{\alpha,\beta},\mathcal A_{\alpha,\beta})$ defines a stability condition on $\mathcal D^b(\p3)$ and $\mathcal D^b(X)$, respectively . We can thus define
\[
\stabgc:=\{\sigma_{\alpha,\beta}=(Z_{\alpha,\beta},\mathcal A_{\alpha,\beta}):(\alpha,\beta)\in\mathbb R_{>0}\times\mathbb R\}\subset\mathrm{Stab}(\p3),
\]
and similarly, we can define $\Stab^{\Geo}_{\Delta}(X)$. Note that for any $\sigma_{\alpha,\beta}=(Z_{\alpha,\beta},\cP_{\alpha,\beta})\in\stabgc$, we have $Z_{\alpha,\beta}(\cO_p)=1$ and $\cO(p)\in\cP_{\alpha,\beta}(1)$. In \Cref{sec.gldim}, we can derive that $\stabgc$ is of dimemsion 3 as a $\gldim$-reachable open subset of $\mathrm{Stab}(\p3)$.

\end{remark}

\subsection{Description of geometric stability conditions}
In this section, we construct an explicit open subset of the space of the space of stability conditions $\stabh(X)$ for any polarized threefold $(X,H)$ by employing the function $\Psi_X$ in \Cref{Psi} to refine the result in \cite{BMS}, based on insights from \cite{BMT,BMS,FLZ,D22}.

\begin{definition}
A stability condition $\sigma$ on $\mathcal D^b(X)$ is called \emph{geometric} (with respect to $X$) if for each point $p\in X$, the skyscraper sheaves are $\sigma$-stable.
\end{definition}

\begin{remark}
Let $\sigma$ be a geometric numerical stability condition on $\mathcal D^b(X)$. Due to \cite[Proposition 2.9]{FLZ}, all skyscraper sheaves are of the same phase. The subspace $\mathrm{Stab}^{\mathrm{Geo}}(X)$ is open in $\mathrm{Stab}(X)$, so each connected component of $\mathrm{Stab}^{\mathrm{Geo}}(X)$ is a complex submanifold of some connected component of $\mathrm{Stab}(X)$ \cite[Proposition 9.4]{B08}. By \cite[Proposition 2.1]{MP}, skyscraper sheaves are stable for all $\sigma\in\stabgc$, i.e. $\stabgc\subset\stabg$. This fact also applies to polarized abelian varieties. 
\end{remark}


\begin{lemma}
  \label{lem:StabP3-2}
Let $\sigma = (Z, \cP)$ be a geometric stability condition on $D^b(X)$ with $Z(\mathcal O_x) = -1$ and $\mathcal O_x \in \cP(1)$ and $\dim X=n$. Then the following are true:
\begin{enumerate}
    \item If $E \in \cP(\linterval{0}{1})$, then $H^i(E) \neq 0$ unless $i \in \{-n+1, \dots, -1,0\}$ and $H^{-n+1}(E)$ is torsion-free.
    \item If $E \in \cP(1)$ is stable, then either $E$ is $\mathcal O_x$ or isomorphic to a complex of locally free sheaves $0 \to E^{-n + 1} \to \dots \to E^{-1} \to 0$ supported on $[-n + 1, -1]$.
    \item If $E \in \Coh(X)$, then $E \in \cP(\linterval{-n+1}{1})$. Moreover, if $E$ is a sheaf with support dimension $\leq k$, then $E \in \cP(\linterval{1-i}{1})$. Here we abuse the notation by setting $\cP(\linterval{1}{1}) = \cP(1)$.
\end{enumerate}
\end{lemma}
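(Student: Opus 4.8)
\emph{Proof idea.} Write $\mathcal{A}=\cP((0,1])$ for the heart and, for $E\in D^{b}(X)$, set $E^{\vee}=R\mathcal{H}om(E,\mathcal{O}_{X})$; since $X$ is smooth, $(-)^{\vee}$ is an anti-autoequivalence of $D^{b}(X)$ with $(-)^{\vee\vee}\cong\mathrm{id}$ and $\mathcal{O}_{x}^{\vee}\cong\mathcal{O}_{x}[-n]$ for closed points $x$. As $\sigma$ is geometric with $\mathcal{O}_{x}\in\cP(1)$, every $\mathcal{O}_{x}$ is $\sigma$-stable of phase $1$, so $\cP(1)$ is a finite-length abelian category with the $\mathcal{O}_{x}$ among its simple objects. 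I would use repeatedly: (a) a nonzero coherent sheaf $G$ surjects onto $\mathcal{O}_{x}$ for each $x\in\operatorname{Supp}G$, so $\mathrm{Hom}(G,\mathcal{O}_{x})\neq 0$ and, by Serre duality, $\mathrm{Ext}^{n}(\mathcal{O}_{x},G)\cong\mathrm{Hom}(G,\mathcal{O}_{x})^{\ast}\neq 0$; (b) the phase axiom $\mathrm{Hom}(F_{1},F_{2})=0$ as soon as $\phi^{-}_{\sigma}(F_{1})>\phi^{+}_{\sigma}(F_{2})$; (c) orthogonality of the standard truncations, $\mathrm{Hom}(D^{\ge 0},D^{\le -1})=0$. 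The \emph{upper bounds} are then immediate: for $0\neq E\in\mathcal{A}$, if $b$ is the top degree with $\mathcal{H}^{b}(E)\neq 0$, composing the truncation $E\to\mathcal{H}^{b}(E)[-b]$ with a surjection onto some $\mathcal{O}_{x}$ gives a nonzero map $E\to\mathcal{O}_{x}[-b]\in\cP(1-b)$; since $\phi^{-}_{\sigma}(E)>0$, (b) forces $b\le 0$. Thus $\mathcal{A}\subseteq D^{\le 0}$, which is the ``$i\le 0$'' half of (1), and the same computation applied to an arbitrary sheaf gives $\Coh(X)\subseteq\cP((-\infty,1])$, the upper half of (3).

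\emph{Statement (2), and a first lower bound.} Let $E\in\cP(1)$ be stable with $E\not\cong\mathcal{O}_{x}$ for all $x$. If $\mathcal{H}^{0}(E)\neq 0$, then $E\to\mathcal{H}^{0}(E)\twoheadrightarrow\mathcal{O}_{x}$ is a nonzero map of stable objects of phase $1$, hence an isomorphism in $\cP(1)$ --- impossible; so $E\in D^{\le -1}$ and $E^{\vee}\in D^{\ge 1}$. If $E^{\vee}\notin D^{\le n-1}$, pick $b\ge n$ top with $\mathcal{H}^{b}(E^{\vee})\neq 0$; a nonzero map $E^{\vee}\to\mathcal{O}_{x}[-b]$ dualises, via $\mathcal{O}_{x}^{\vee}\cong\mathcal{O}_{x}[-n]$, to a nonzero element of $\mathrm{Hom}(\mathcal{O}_{x}[b-n],E)=\mathrm{Ext}^{\,n-b}(\mathcal{O}_{x},E)$, which vanishes by (b) when $n-b<0$ and forces $E\cong\mathcal{O}_{x}$ when $n-b=0$ --- again impossible. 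Hence $E^{\vee}\in D^{[1,n-1]}$, so dually $E\in D^{[-n+1,-1]}$, and since $E$ and $E^{\vee}$ have the mirror amplitudes $[-n+1,-1]$ and $[1,n-1]$, $E$ is quasi-isomorphic to a complex of locally free sheaves in degrees $[-n+1,-1]$; this is (2). Running the same dualisation for a general $E\in\mathcal{A}$ (now $E\in D^{\le 0}$ gives $E^{\vee}\in D^{\ge 0}$, and the case $b\ge n+1$ gives $E^{\vee}\in D^{\le n}$) yields $E\in D^{[-n,0]}$.

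\emph{The sharp bound, statement (3), and the main obstacle.} It remains to improve $D^{[-n,0]}$ to $D^{[-n+1,0]}$ with $\mathcal{H}^{-n+1}(E)$ torsion-free, and to prove the lower half of (3): a sheaf of support dimension $\le k$ lies in $\cP([1-k,1])$. I would prove (3) by induction on $k=\dim\operatorname{Supp}F$: the case $k=0$ is immediate since $F$ is an iterated extension of skyscrapers and $\cP(1)$ is extension-closed; for the step one writes a putative destabiliser as $\mathrm{Hom}(F,C)$ with $C$ $\sigma$-semistable of phase $<1-k$, uses Serre duality to turn it into $\mathrm{Hom}(C^{\vee},F^{\vee})$, and via a dévissage of $F$ into a part of support dimension $<k$ (inductive hypothesis) and a part pure of dimension $k$ (for which $F^{\vee}$ is concentrated in degrees $[n-k,n]$) reduces to the amplitude bounds already obtained, the support property of $\sigma$ being used to rule out the extreme phase $1-k$. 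Granting (3): if $\mathcal{H}^{-n}(E)\neq 0$ for $E\in\mathcal{A}$, the truncation $\mathcal{H}^{-n}(E)[n]\hookrightarrow E$ together with (3) forces $\phi^{-}_{\sigma}(\mathcal{H}^{-n}(E))=1-n$, so the bottom Harder--Narasimhan factor $A$ has $A[n]\in\cP(1)$ semistable; by (2) its stable factors lie in $D^{[-n+1,0]}$, hence $A[n]\in D^{[-n+1,0]}$, i.e. $A\in D^{\le -n}\subseteq D^{\le -1}$, contradicting via (c) the nonzero Harder--Narasimhan map $\mathcal{H}^{-n}(E)\to A$ out of a sheaf in degree $0$. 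The same mechanism --- a bottom factor of phase $2-n$, now detected through a torsion subsheaf of $\mathcal{H}^{-n+1}(E)$ embedded via $\mathcal{H}^{-n+1}(E)[n-1]\hookrightarrow E$ and contradicting $\phi^{+}_{\sigma}(E)\le 1$ --- shows $\mathcal{H}^{-n+1}(E)$ is torsion-free. The honest assessment is that the phase-chasing and the formal duality $(-)^{\vee}$ are routine; the genuinely hard step, which I expect to absorb most of the work, is the one-sided bound $\Coh(X)\subseteq\cP([1-k,1])$, since that is where the geometry of $X$ --- Serre duality, the behaviour of $R\mathcal{H}om(-,\mathcal{O}_{X})$ on sheaves of bounded support dimension, and the support property of $\sigma$ --- must really be invoked.
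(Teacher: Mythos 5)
Your overall strategy is exactly the extension of \cite[Lemma 10.1]{B08} that the paper invokes without proof: Hom-vanishing against the stable skyscrapers, the truncation-plus-surjection trick for the top cohomology, dualisation for the bottom, the locally-free-complex criterion, and an induction on support dimension for (3) which then feeds back into the sharpened form of (1). The upper bounds, the argument for (2), and the mechanism by which (3) implies the exclusion of $\mathcal{H}^{-n}$ and the torsion-freeness of $\mathcal{H}^{-n+1}$ are sound, up to two small repairs: ``$A\in D^{\le -n}$'' should read $A\in D^{\ge 1}$ (that is what contradicts a nonzero map from a sheaf), and the ``mirror amplitudes'' assertion should be routed through the standard criterion that $E$ is a complex of locally free sheaves in degrees $[a,b]$ iff $\mathrm{Hom}^{i}(E,\cO_x)=0$ for $i\notin[-b,-a]$ and all $x$ (this does follow, by combining the amplitude of $E^{\vee}$ with Serre duality $\mathrm{Hom}^{i}(E,\cO_x)\cong\mathrm{Hom}^{n-i}(\cO_x,E)^{\ast}$ applied to the amplitude of $E$, but as written it is only asserted).

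The genuine gap is the inductive step for (3), which you correctly single out as the hard point but do not prove, and whose sketch would not work as stated. If $F$ has support dimension $\le k$ and its last HN factor $C$ has $\phi(C)\le 1-k$, then at that stage of the argument the only available information is $C[m]\in\cP((0,1])\subseteq D^{[-n,0]}$ for the appropriate shift $m$, so $C^{\vee}$ has amplitude roughly $[-m,\,2n-m]$ while $F^{\vee}\in D^{[n-k,n]}$; these ranges always overlap for $n\ge 3$, so ``the amplitude bounds already obtained'' kill nothing, neither for $\mathrm{Hom}(C^{\vee},F^{\vee})$ nor, on the other side, for the spectral-sequence terms $\mathrm{Ext}^{p}(F,\mathcal{H}^{-p}(C))$ with $0\le p\le n-1-m$. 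In Bridgeland's surface case the corresponding step is not an amplitude count: only the $p=0$ term occurs and it dies because a torsion sheaf admits no nonzero map to the torsion-free lowest cohomology of the shifted HN factor; on a threefold the terms with $p>0$ must also be controlled, and torsion-freeness of the single lowest cohomology sheaf is no longer sufficient --- this is precisely the non-routine content of the lemma, and it is missing. Two further points make the gap material rather than cosmetic. First, the support property (a positivity statement about a quadratic form on numerical classes) is not the tool that excludes the boundary phase; in \cite{B08} the endpoint is handled by identifying the stable factors of a semistable object at the integer phase via (2) and using orthogonality. Second, your version of (3) with the closed interval $[1-k,1]$ is strictly weaker than the statement, and your own final step needs the open endpoint: a torsion subsheaf $T\subseteq\mathcal{H}^{-n+1}(E)$ of dimension $d$ only yields a contradiction if $\phi^{-}(T)>1-d$ strictly, so that $T[n-1]$ has all phases $>1\ge\phi^{+}(E)$; with $\phi^{-}(T)\ge 1-d$ there is no Hom-vanishing. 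You should also arrange the induction so that it only uses the coarse bound $D^{[-n,0]}$ for heart objects at each stage, since the sharpened amplitude and torsion-freeness are exactly what is being proved from (3); as sketched, this circularity is not addressed. Until this induction (including the higher-$\mathrm{Ext}$ vanishing and the endpoint) is carried out, parts (1) and (3) are not established.
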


\begin{proof}
This is a straightforward extension of \cite[Lemma 10.1]{B08}, and the proof proceeds in a similar fashion.
\end{proof}

\begin{lemma}
\label{reduce}
Let $(X,H)$ be a polarized threefold. Then, for \(\sigma \in \stabgh(X)\), we have \(\sigma = (Z, \cA)[g]\)  for some \(g \in \widetilde{\GL}_{2}^{+}(\RR)\) where
  \[Z = Z^{a, b, c}_{d, \beta} := - \ch_{3}^{\beta} + bH \ch_{2}^{\beta} + aH^{2} \ch_{1}^{\beta} + cH^{3} \ch_{0}^{\beta} + i(H \ch_{2}^{\beta} - dH^{3} \ch_{0}^{\beta})\]
   for \(\alpha > 0\). Furthermore, if $d>0$, we can express $Z$ as:
\[Z^{a,b}_{\alpha, \beta}  := - \ch_{3}^{\beta} + bH \ch_{2}^{\beta} + aH^{2} \ch_{1}^{\beta} + i(H \ch_{2}^{\beta} - \frac{\alpha^{2}}{2}H^{3} \ch_{0}^{\beta})\]
  where \(\alpha < 0\).
\end{lemma}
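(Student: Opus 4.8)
The plan is to start from an arbitrary geometric stability condition $\sigma \in \stabgh(X)$ and normalize it using the $\gl2$-action so that the skyscraper sheaves $\cO_x$ satisfy $Z(\cO_x) = -1$ and $\cO_x \in \cP(1)$; this is possible since all skyscrapers share a common phase (as noted after the definition of geometric stability conditions) and the $\gl2$-action is transitive on the relevant normalizations. Then the central charge $Z$ is a homomorphism $\Lambda_H \to \bC$, i.e. determined by its values on the four generators $H^3\ch_0, H^2\ch_1, H\ch_2, \ch_3$, hence $Z$ is an $\bR$-linear combination of these four functionals with complex coefficients — a priori $8$ real parameters. The first task is to cut this down: the normalization $Z(\cO_x) = -1$ forces the coefficient of $\ch_3$ to be $-1$ (since $\ch(\cO_x) = (0,0,0,1)$, so $\ch_3^\beta(\cO_x) = 1$ while the lower Chern characters vanish), which kills two real parameters. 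This already puts $Z$ in the shape $Z = -\ch_3^\beta + (\text{complex lin. comb. of } H\ch_2^\beta, H^2\ch_1^\beta, H^3\ch_0^\beta)$, after absorbing the $\beta$-twist into a relabelling of the remaining coefficients (the $\ch^\beta$ and $\ch$ bases span the same space for fixed $\beta$, and we are free to choose $\beta$).

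The key step is to show the imaginary part of $Z$ has the claimed form $\Im Z = H\ch_2^\beta - dH^3\ch_0^\beta$, i.e. the coefficient of $H^2\ch_1^\beta$ in $\Im Z$ vanishes and the coefficient of $H\ch_2^\beta$ can be normalized to $1$. Here I would use Lemma \ref{lem:StabP3-2}(3): every coherent sheaf lies in $\cP(\linterval{-n+1}{1})$, and more precisely torsion sheaves supported in low dimension lie in high slices. Combined with the fact that $\Cohb(X)$ is (up to the $\gl2$-rotation sending $\cP(1)$ to the standard position) a tilt of $\Coh(X)$ and that $Z$ must be a stability function on the appropriate heart, the positivity of $\Im Z$ on that heart — together with Lemma \ref{nuch1}, which says $H^2\ch_1^\beta > 0$ generically controls objects of $\Cohb(X)$ — pins down $\Im Z$ to be proportional to $H\ch_2^\beta - (\text{const})H^3\ch_0^\beta$ with no $H^2\ch_1^\beta$ term; otherwise one produces objects (e.g. line bundles twisted appropriately, or their shifts) violating the stability-function axiom or the common-phase property of skyscrapers. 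Rescaling by a positive real (part of $\gl2$) normalizes the leading coefficient to $1$, giving the form $Z^{a,b,c}_{d,\beta}$. The parenthetical ``for $\alpha > 0$'' is really the statement that in the subcase $d > 0$ we may write $d = \alpha^2/2$; then $c$ can be absorbed (the $cH^3\ch_0^\beta$ real term is removed by a further shift of $\beta$, since changing $\beta$ shuffles a multiple of $H^3\ch_0$ into the real part), yielding $Z^{a,b}_{\alpha,\beta}$.

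The main obstacle I expect is the rigidity argument in the previous paragraph: carefully proving that $\Im Z$ cannot contain an $H^2\ch_1^\beta$ component and cannot contain $\ch_3^\beta$ at all. The clean way is to follow \cite[Lemma 10.1 and Section 10]{B08} as adapted in Lemma \ref{lem:StabP3-2}: use that $\cP(1)$ must contain all skyscrapers and nothing of lower codimension support beyond what is forced, run the homological-support argument to constrain which sheaves sit in $\cP(\linterval{0}{1})$, and then read off the constraints on the central charge coefficients from the requirement that $Z$ restricted to that slice takes values in the appropriate half-plane. The bookkeeping with the $\beta$-twist (which affine change of the Chern-character basis is harmless) and with which real scalings are available from $\gl2$ versus which shifts of $\beta$ are available is the fiddly part, but it is routine once the homological skeleton is in place; the genuinely substantive input is Lemma \ref{lem:StabP3-2}, which we are taking as given.
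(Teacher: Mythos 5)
Your overall strategy coincides with the paper's: normalize with the $\gl2$-action so that $Z(\cO_x)=-1$ and $\cO_x\in\cP(1)$, expand $Z$ in the basis $H^{3-i}\ch_i$, and constrain the coefficients using \Cref{lem:StabP3-2}. However, two of your key steps have genuine problems. First, the vanishing of the $H^{2}\ch_{1}^{\beta}$-coefficient of $\Im Z$ is \emph{not} forced by stability: a central charge containing such a term is simply the same stability condition written with the wrong twist, so your rigidity claim (``otherwise one produces objects violating the stability-function axiom'') is false. In the paper this coefficient is removed purely by choosing $\beta$, i.e.\ by the triangular change of basis from $\ch_i$ to $\ch_i^{\beta}$; you half-acknowledge this freedom but then argue as if the vanishing were a consequence of stability. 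Relatedly, you never establish that the $H\ch_{2}$-coefficient of $\Im Z$ is strictly positive, which is what licenses rescaling it to $1$ by a positive scaling in $\GL_2^+(\RR)$: the paper gets nonnegativity from $\cO_C\in\cP((0,1])$ for curves $C$ (so $\Im Z(\cO_C)\ge 0$, i.e.\ $b_2\,H\cdot C\ge 0$) and then excludes the degenerate case $b_2=0$ via the deformation property and openness of geometric stability conditions --- a step entirely absent from your plan.

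Second, your mechanism for the ``furthermore'' part is wrong. You propose to remove the $cH^{3}\ch_{0}^{\beta}$ term by ``a further shift of $\beta$'', but $\beta$ has already been spent making the $H^{2}\ch_{1}^{\beta}$-coefficient of $\Im Z$ vanish, and any further shift $\beta\mapsto\beta+\delta$ reintroduces a $\delta\, H^{2}\ch_{1}^{\beta+\delta}$ term into the imaginary part. The paper instead uses the shearing action of $\RR\subset\gl2$, adding a real multiple of $\Im Z$ to $\Re Z$; this kills $c$ precisely because the $H^{3}\ch_{0}^{\beta}$-coefficient $d=\tfrac{\alpha^{2}}{2}$ of $\Im Z$ is nonzero --- this is exactly where the hypothesis $d>0$ enters --- and it is a target-side ($\GL_2^+$) operation, not a source-side ($\beta$-twist) one. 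Keeping these two gauge freedoms separate, and supplying the positivity/openness argument for the $H\ch_{2}$-coefficient, is the actual content of the lemma, so these are genuine gaps rather than routine bookkeeping.
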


\begin{proof}
By the universal property of Chern character, we have
  \[Z = a_{1} \ch_{3} + a_{2} H \ch_{2} + a_{3} H^{2} \ch_{1} + a_{4} H^{3} \ch_{0} +
  i(b_{1} \ch_{3} + b_{2} H \ch_{2} + b_{3} H^{2} \ch_{1} + b_{4} H^{3} \ch_{0})\]
  Since \(\sigma\) is geometric, we have all \(\cO_{x}\) have the same central charge and phase \cite[Proposition 2.9]{FLZ}. Moreover, \(Z(\cO_{x}) \neq 0\), so by action of \(\widetilde{\GL}_{2}^{+}(\RR)\), we may assume that \(Z(\cO_{x}) = -1\) and \(\cO_{x} \in P(1)\), i.e. \(a_{1} = -1\) and \(b_{1} = 0\). Therefore,
  \[Z = - \ch_{3}^{\beta} + a_{2} H \ch_{2}^{\beta} + a_{3} H^{2} \ch_{1}^{\beta} + a_{4} H^{3} \ch_{0}^{\beta} +
    i(b_{2} H \ch_{2}^{\beta} + b_{3} H^{2} \ch_{1}^{\beta} + b_{4} H^{3} \ch^{\beta}_{0})\]
  By~\Cref{lem:StabP3-2}, we have \(\cO_{C} \in \cP(\linterval{0}{1})\) for every curve \(C \subset \PP^{3}\). Hence \(b_{2} H \ch_{2}^{\beta}(\cO_{S}) = b_{2} H C \geq 0\) and thus \(b_{2} \geq 0\). However due to the deformation property and openness of geometric stability condition, we have \(b_{2} > 0\). By the action of \(\RR \subset \widetilde{\GL}_{2}^{+}(\RR)\) scaling the \(y\)-axis, we can set \(b_{2} = 1\). By changing from \(\ch_{i}\) to \(\ch_{i}^{\beta}\), we may assume that \(b_{3} = 0\).

Assume now that $b>0$. We may write $b=\frac{\alpha^2}{2}$ for some $\alpha>0$.  By using the shearing action of \(\RR \subset \widetilde{\GL}_{2}^{+}(\RR)\), we can eliminate either \(b\) or \(c\). We choose to eliminate \(c\) and express \(Z\) as required.
\end{proof}

From now on, we focus on a particular subspace of stability condition space:
\begin{definition}
\label{stabghd}
Let $X$ be a projective threefold with an ample polarization $H$. We define the subspace of stability condition space \[\Stab^{\mathrm{Geo}\dagger}_{H}(X) :=\left\{\sigma=(Z_{\alpha, \beta}^{a,b}, \cA_{\alpha,\beta})[g]  \in \Stab_H(X) \mid g \in \widetilde{\GL}_{2}^{+}(\RR), \alpha> 0 \right\},\] where $Z_{\alpha,\beta}^{a,b}$ is given as in \Cref{reduce}.
\end{definition}
Note that $\Stab^{\Geo\dagger}(X)\subset\Stab^{\Geo}(X)$ by \cite[Proposition 2.1]{MP}. Motivated by the Le Potier function as discussed in \cite{FLZ,D22}, we define the following function:

\begin{definition}
\label{Psi}
Let $X$ be a projective threefold with an ample polarization $H$. We define the function twisted by $\beta H$, $\Psi_{X,\nu}:\mathbb R_{>0}\times\mathbb R^2\to\mathbb R\cup\{-\infty\}$, with respect to a parameter $\nu\in\mathbb R$, as follows:
\[
\Psi_{X,\nu}(\alpha, \beta, b):=  \limsup_{\mu \to \nu} \left\{\frac{\ch_{3}^{\beta}(F) - bH \ch_{2}^{\beta}(F)}{H^{2} \ch_{1}^{\beta}(F)}:
{\begin{aligned}
&F\in\Cohb(X) \text{ is } \nu_{\alpha, \beta}\text{-semistable}\\ 
&\text{ with } \nu_{\alpha, \beta}(F) = \mu
\end{aligned}}
\right\},
\]
if the limit exists, and $\Psi_{X,\nu}(\alpha,\beta,b):=-\infty$ otherwise. In particular, when $\nu=0$, we denote by $\Psi_{X}:=\Psi_{X,0}$ for simplicity.
\end{definition}
Let $F\in\Cohb(X)$ be $\nu_{\alpha,\beta}$-semistable. By \Cref{nuch1}, $H^2\mathrm{ch}_1(X)\ge0$. Note that, if $H^2\mathrm{ch}_1(X)=0$, $\nu_{\alpha,\beta}=+\infty$, which ensures that $\Psi_{X,\nu}$ is well-defined.

\begin{lemma}
\label{bound}
Let $(X,H)$ be a polarized threefold. Then, for any triple $(\alpha,\beta,b)\in\mathbb R_{>0}\times\mathbb R^2$,  there is quadratic function $\Xi$ of $\nu\in\mathbb R$ such that 
  \[\Psi_{X,\nu}(\alpha, \beta, b) \leq \Xi_{\alpha,\beta,b}(\nu).\]
In particular, if $\nu=0$, we have 
\[
\Psi_X(\alpha,\beta,b)\le\frac{\alpha^2}{6}+\frac{\alpha}{2}|b|.
\]

\end{lemma}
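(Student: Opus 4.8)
The plan is to bound the numerator $\ch_3^\beta(F) - bH\ch_2^\beta(F)$ of the defining ratio in terms of $H^2\ch_1^\beta(F)$, $H\ch_2^\beta(F)$, and $H^3\ch_0^\beta(F)$, using the two Bogomolov–Gieseker inequalities available for $\nu_{\alpha,\beta}$-semistable objects (Lemma \ref{Bog}), and then re-express everything in terms of the tilt slope $\nu = \nu_{\alpha,\beta}(F)$. First I would normalize: for a $\nu_{\alpha,\beta}$-semistable $F$ with $H^2\ch_1^\beta(F) > 0$, set $r = H^3\ch_0^\beta(F)/H^2\ch_1^\beta(F)$ and observe that the tilt slope relation $\nu_{\alpha,\beta}(F) = \nu$ reads $H\ch_2^\beta(F) = \alpha^2 H^2\ch_1^\beta(F)\big(\nu + \tfrac12 r\big)$, so that $H\ch_2^\beta(F)/H^2\ch_1^\beta(F)$ is an affine function of $\nu$ and $r$. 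The classical BG inequality $\overline\Delta_H(F) \ge 0$ then bounds $r$: dividing by $(H^2\ch_1^\beta(F))^2$ gives $1 - 2r\cdot\big(H\ch_2^\beta(F)/H^2\ch_1^\beta(F)\big) \ge 0$ after untwisting appropriately, which together with the slope relation constrains $r$ to a bounded interval depending on $\alpha, \nu$ (this is the standard "the hyperbola $\overline\Delta = 0$ cuts out a compact region" phenomenon in tilt stability).

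Next I would invoke the generalized BG inequality, which Lemma \ref{Bog} states in the form $\overline\nabla_H^\beta(F) = 4(H\ch_2^\beta(F))^2 - 6H^2\ch_1^\beta(F)\ch_3^\beta(F) \ge 0$ — more precisely, the quadratic form $Q_{\alpha^2}^\beta(F) = \alpha^2\overline\Delta_H(F) + \overline\nabla_H^\beta(F) \ge 0$ holds for all $\nu_{\alpha,\beta}$-semistable objects. Solving $Q_{\alpha^2}^\beta(F) \ge 0$ for $\ch_3^\beta(F)$ yields
\[
\ch_3^\beta(F) \le \frac{2(H\ch_2^\beta(F))^2}{3H^2\ch_1^\beta(F)} + \frac{\alpha^2 \overline\Delta_H(F)}{6 H^2\ch_1^\beta(F)}.
\]
Dividing through by $H^2\ch_1^\beta(F)$ and substituting the affine-in-$(\nu,r)$ expression for $H\ch_2^\beta(F)/H^2\ch_1^\beta(F)$, together with the bound $0 \le \overline\Delta_H(F)/(H^2\ch_1^\beta(F))^2 \le$ (an explicit function of $\alpha,\nu$ coming from the slope relation and $\overline\Delta \ge 0$ itself), produces an upper bound for $\big(\ch_3^\beta(F) - bH\ch_2^\beta(F)\big)/H^2\ch_1^\beta(F)$ that depends only on $\alpha, \beta, b$ and is a polynomial — in fact quadratic — in $\nu$ and in $r$; maximizing over the bounded range of $r$ leaves a quadratic function $\Xi_{\alpha,\beta,b}(\nu)$. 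I then take the $\limsup$ as $\mu \to \nu$, which only passes to the (continuous) bound, giving $\Psi_{X,\nu}(\alpha,\beta,b) \le \Xi_{\alpha,\beta,b}(\nu)$. The cases $H^2\ch_1^\beta(F) = 0$ are excluded from the supremum since then $\nu_{\alpha,\beta}(F) = +\infty \ne \mu$, as noted after Definition \ref{Psi}.

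For the specialization $\nu = 0$: setting $\nu = 0$ in the slope relation gives $H\ch_2^\beta(F) = \tfrac{\alpha^2}{2}H^3\ch_0^\beta(F)$, i.e. precisely the boundary condition appearing in the generalized BG inequality of Lemma \ref{Bog}, which directly yields $\ch_3^\beta(F) \le \tfrac{\alpha^2}{6}H^2\ch_1^\beta(F)$, hence $\ch_3^\beta(F)/H^2\ch_1^\beta(F) \le \tfrac{\alpha^2}{6}$. It remains to bound the extra term $-bH\ch_2^\beta(F)/H^2\ch_1^\beta(F) = -b\cdot\tfrac{\alpha^2}{2}r$. The classical BG inequality with $\nu = 0$ forces $\overline\Delta_H(F) \ge 0$, i.e. $1 - \alpha^2 r^2 \ge 0$ after dividing by $(H^2\ch_1^\beta(F))^2$ and using $H\ch_2^\beta = \tfrac{\alpha^2}{2}H^3\ch_0^\beta$; thus $|r| \le 1/\alpha$, so $|{-b\tfrac{\alpha^2}{2}r}| \le \tfrac{\alpha}{2}|b|$, and adding gives $\Psi_X(\alpha,\beta,b) \le \tfrac{\alpha^2}{6} + \tfrac{\alpha}{2}|b|$ as claimed.

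The main obstacle I anticipate is purely bookkeeping: keeping the $\beta$-twist consistent throughout (the quantities $\overline\Delta_H$, $\overline\nabla_H^\beta$ are defined with a mix of twisted and untwisted Chern characters, and $\overline\Delta_H$ is in fact $\beta$-independent), and verifying that the region cut out by $\overline\Delta_H(F) \ge 0$ together with the tilt-slope constraint is genuinely bounded in the $r$ variable for every fixed $(\alpha,\nu)$ — the degenerate directions (e.g. $H^3\ch_0^\beta = 0$, or the walls where $\nu_{\alpha,\beta}$ jumps) must be checked to still satisfy the quadratic bound by a limiting or direct argument. No delicate analysis is needed beyond this; the $\limsup$ causes no trouble since the bound $\Xi_{\alpha,\beta,b}$ is continuous in the slope parameter.
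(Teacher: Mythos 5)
Your proposal is correct and follows essentially the same route as the paper: both combine the classical Bogomolov--Gieseker inequality (which, via the tilt-slope relation, confines the ratio $H\ch_2^{\beta}/H^2\ch_1^{\beta}$ — equivalently your variable $r$ — to a bounded interval depending on $\alpha,\nu$) with the generalized inequality $Q_{\alpha^2}^{\beta}\ge 0$ solved for $\ch_3^{\beta}$, then dominate the resulting expression by a quadratic in $\nu$ and pass to the $\limsup$, specializing at $\nu=0$ to get $\frac{\alpha^2}{6}+\frac{\alpha}{2}|b|$. Your maximization over $r$ and direct use of the $\nu_{\alpha,\beta}=0$ form of the generalized inequality are only cosmetic reshufflings of the paper's computation.
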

\begin{proof}
  By the generalized Bogomolov inequality on \(\Cohb(X)\) and \(\ch_{1}^{\beta}(E) \geq 0\) for \(\nu_{\alpha, \beta}\)-semistable \(E \in\Cohb(X)\) with \(\nu = \nu_{\alpha, \beta}(E) \neq \infty\), we obtain
  \[\frac{\ch_{3}^{\beta}(E) - b H\ch_{2}^{\beta}(E)}{H^{2} \ch_{1}^{\beta}(E)} \leq \frac{\alpha^{2}}{6} + \frac{(4 \nu - 6b)}{6} \frac{H \ch_{2}^{\beta}(E)}{H^{2} \ch_{1}^{\beta}(E)}\]
Furthermore, the Bogomolov inequality yields
  \[\frac{4}{\alpha^{2}} \frac{(H \ch_{2}^{\beta}(E))^{2}}{(H^{2} \ch_{1}(E))^{2}} - \frac{4 \nu}{\alpha^{2}} \frac{H \ch_{2}^{\beta}(E)}{H \ch_{1}^{\beta}(E)} -1 \leq 0\]
  which simplifies to
  \[\frac{\nu - \sqrt{\nu^{2} + \alpha^{2}}}{2} \leq \frac{H \ch^{\beta}_{2}(E)}{H^{2} \ch^{\beta}_{1}(E)} \leq \frac{\nu + \sqrt{\nu^{2} + \alpha^{2}}}{2}\]
  Consequently, 
  \[\Psi_{X,\nu}(\alpha, \beta, b) \leq \frac{1}{6} \alpha^{2} + \abs{\frac{2}{3} \nu - b}(\nu + \frac{\alpha}{2}) \leq \frac{1}{6} \alpha^{2} + \frac{1}{2}(\frac{2}{3} \nu - b)^{2} + \frac{1}{2}(\nu + \frac{\alpha}{2})^{2}=: \Xi_{\alpha,\beta,b}(\nu)\]
 Moreover, if we set \(\nu = 0\), we obtain
  \[\Psi_X(\alpha, \beta, b) \leq \frac{\alpha^{2}}{6} + \frac{\alpha}{2} \abs{b}.\]
\end{proof}

\begin{proposition}
\label{abelianfold}
Let $(X,H)$ be a polarized abelian threefold. Then, for any triple $(\alpha,\beta,b)\in\mathbb R_{>0}\times\mathbb R^2$, we have
\[
\Psi_X(\alpha,\beta,b)=\frac{\alpha^2}{6}+\frac{\alpha}{2}|b|.
\]
\end{proposition}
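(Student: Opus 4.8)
The inequality $\Psi_X(\alpha,\beta,b)\le\frac{\alpha^2}{6}+\frac{\alpha}{2}|b|$ is already \Cref{bound}, so the plan is to prove the reverse inequality by producing enough $\nu_{\alpha,\beta}$-semistable objects. Since by definition $\Psi_X(\alpha,\beta,b)$ is a $\limsup$, as $\mu\to 0$, of the values $\frac{\ch_3^\beta(F)-bH\ch_2^\beta(F)}{H^2\ch_1^\beta(F)}$ taken over $\nu_{\alpha,\beta}$-semistable $F\in\Cohb(X)$ with $\nu_{\alpha,\beta}(F)=\mu$, it suffices to exhibit a family of such objects whose $\nu_{\alpha,\beta}$-slopes run through values arbitrarily close to $0$ and along which this ratio tends to $\frac{\alpha^2}{6}+\frac{\alpha}{2}|b|$.

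The objects realizing the bound are the ``minimal'' bundles on $X$, i.e.\ those on the common zero locus of the quadratic forms $\overline{\Delta}_{H}$ and $\overline{\nabla}_{H}^{\beta}$ of \Cref{Bog}. First I would record that a line bundle $L$ on a threefold satisfies $\overline{\Delta}_{H}(L)=0$ exactly when $c_{1}(L)$ is proportional to $H$ (the equality case of the Hodge index theorem for the Lorentzian form $(D_{1},D_{2})\mapsto H\cdot D_{1}\cdot D_{2}$ on $\mathrm{NS}_{\mathbb{R}}(X)$), and that in that case $\overline{\nabla}_{H}^{\beta}(L)=0$ as well, since then $\ch^{\beta}(L)=e^{\tilde s H}$ for a real number $\tilde s$. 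Next, for a dense set of $t\in\mathbb{R}$ I would construct a $\mu$-stable vector bundle $F_{t}$ on $X$ with $c_{1}(F_{t})$ proportional to $H$ and $H^{2}\ch_{1}(F_{t})/(H^{3}\ch_{0}(F_{t}))=t$: one may take $F_{t}=\pi_{*}M$ for a suitable isogeny $\pi\colon Y\to X$ (for instance a multiplication map $[\ell]\colon X\to X$) and a general line bundle $M$ on $Y$ with $c_{1}(M)$ proportional to $\pi^{*}H$ --- then $F_{t}$ is $\mu$-semistable since $\mu$-semistability is preserved under push-forward along finite \'etale maps, and $\mu$-stable for generic $M$ --- or, alternatively, one may invoke Mukai's classification of simple semihomogeneous bundles. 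In either case $\ch(F_{t})=\operatorname{rk}(F_{t})\,e^{\,c_{1}(F_{t})/\operatorname{rk}(F_{t})}$, so $\overline{\Delta}_{H}(F_{t})=\overline{\nabla}_{H}^{\beta}(F_{t})=0$; writing $\tilde s=t-\beta$ one finds
\[
\big(H^{3}\ch_{0}^{\beta},\,H^{2}\ch_{1}^{\beta},\,H\ch_{2}^{\beta},\,\ch_{3}^{\beta}\big)(F_{t})=\operatorname{rk}(F_{t})\,H^{3}\cdot\Big(1,\;\tilde s,\;\tfrac{\tilde s^{2}}{2},\;\tfrac{\tilde s^{3}}{6}\Big),
\]
so that $\nu_{\alpha,\beta}(F_{t})=\frac{\tilde s^{2}-\alpha^{2}}{2\alpha\tilde s}$ and $\frac{\ch_{3}^{\beta}(F_{t})-bH\ch_{2}^{\beta}(F_{t})}{H^{2}\ch_{1}^{\beta}(F_{t})}=\frac{\tilde s^{2}}{6}-\frac{b\tilde s}{2}$, exactly as for a line bundle of twisted slope $\tilde s$.

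The genuinely substantial point is tilt-semistability, where I would appeal to the standard fact (in the spirit of \cite{BMT,BMS}) that a $\mu_{\beta}$-stable sheaf $E$ with $\overline{\Delta}_{H}(E)=0$ has no tilt-walls: it is $\nu_{\alpha,\beta}$-semistable for all $\alpha>0$ when $E\in\mathcal T_{\beta}$, and $E[1]$ is $\nu_{\alpha,\beta}$-semistable for all $\alpha>0$ when $E\in\mathcal F_{\beta}$. Concretely, when $\tilde s>0$ one has $F_{t}\in\mathcal T_{\beta}\subset\Cohb(X)$, every subobject of $F_{t}$ in $\Cohb(X)$ is an honest subsheaf lying in $\mathcal T_\beta$, and the Bogomolov inequality of \Cref{Bog} combined with $\overline{\Delta}_{H}(F_{t})=0$ constrains its $\nu_{\alpha,\beta}$; when $\tilde s<0$ one runs the analogous argument for $F_{t}[1]$. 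Finally I would select the family according to the sign of $b$: if $b\le 0$, let $t\to\beta+\alpha$, so $\tilde s\to\alpha>0$ and no shift is needed, giving $\nu_{\alpha,\beta}(F_{t})\to0$ and $\tfrac{\tilde s^{2}}{6}-\tfrac{b\tilde s}{2}\to\tfrac{\alpha^{2}}{6}-\tfrac{b\alpha}{2}=\tfrac{\alpha^{2}}{6}+\tfrac{\alpha}{2}|b|$; if $b>0$, let $t\to\beta-\alpha$, so $\tilde s\to-\alpha$, and work with $F_{t}[1]\in\Cohb(X)$, again getting $\nu_{\alpha,\beta}(F_{t}[1])\to0$ while the ratio tends to $\tfrac{\alpha^{2}}{6}+\tfrac{b\alpha}{2}=\tfrac{\alpha^{2}}{6}+\tfrac{\alpha}{2}|b|$. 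In both cases $\Psi_{X}(\alpha,\beta,b)\ge\tfrac{\alpha^{2}}{6}+\tfrac{\alpha}{2}|b|$, which together with \Cref{bound} gives the equality. The main obstacles I anticipate are: (a) confirming that the bundles $F_{t}$ with $c_{1}$ proportional to $H$ and twisted slope dense in $\mathbb{R}$ genuinely exist with all the stated properties; and (b) quoting precisely the ``no tilt-walls on $\{\overline{\Delta}_{H}=0\}$'' statement and its counterpart for shifts of sheaves in $\mathcal F_{\beta}$.
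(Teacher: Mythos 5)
Your proposal is correct and follows essentially the same route as the paper: the paper also saturates the bound of \Cref{bound} using Mukai's simple semi-homogeneous bundles with $\ch = \ch_0\, e^{sH}$ (the objects you build via isogeny pushforward), takes slopes approaching $\beta+\alpha$ and $\beta-\alpha$ (using the shift $[1]$ in the latter case), and invokes \cite[Corollary 3.11]{BMS} for their $\nu_{\alpha,\beta}$-stability. The only cosmetic difference is that you split into cases by the sign of $b$, whereas the paper runs both limiting sequences at once and lets the $\limsup$ pick out $\frac{\alpha^2}{6}+\frac{\alpha}{2}|b|$.
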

\begin{proof}
Consider the simple semi-homogeneous vector bundles $E_{p/q}$, as defined by Mukai, with slope $\frac{p}{q}$ and Chern character 
\[
\mathrm{ch}(E_{p/q})=\mathrm{ch}_0(E_{p/q})e^{\frac{p}{q}H}.
\] 
By \cite{Pol14,FLZ}, these bundles $E_{p/q}$ are stable with respect to any stability condition $\sigma\in\Stab_H(X)$. Find two rational sequences $\{{p_n/q_n}\}$ and $\{{p_n'/q_n'}\}$ that converge to $\alpha+\beta$ and $-\alpha+\beta$, respectively, as $n\to\infty$. Note that if $\frac{p}{q}>\beta$, $E_{p/q}\in\Cohb(X)$ and $\frac{p}{q}\le\beta$, $E_{p/q}[1]\in\Cohb(X)$. Thus, as $n\to\infty$, 
\[
\frac{\ch_{3}^{\beta}(E_{p_n/q_n}) - bH \ch_{2}^{\beta}(E_{p_n/q_n})}{H^{2} \ch_{1}^{\beta}(E_{p_n/q_n})}\to\frac{\alpha^2}{6}-\frac{\alpha}{2}b,\quad \frac{\ch_{3}^{\beta}(E_{p'_n/q'_n}[1]) - bH \ch_{2}^{\beta}(E_{p'_n/q'_n}[1])}{H^{2} \ch_{1}^{\beta}(E_{p'_n/q'_n}[1])}\to\frac{\alpha^2}{6}+\frac{\alpha}{2}b.
\]
In addition, by \cite[Corollary 3.11]{BMS}, $E_{p/q}$ is $\nu_{\alpha,\beta}$-stable. Hence, 
\[
\Psi_X(\alpha,\beta,b)\ge\frac{\alpha^2}{6}+\frac{\alpha}{2}|b|.
\]
Combining with \Cref{bound}, we obtain the required result.
\end{proof}

Using a similar strategy to exam twisted bundles $\mathcal O(d)$ on $\p3$, we can derive the following result:
\begin{proposition}
For any triple $(\alpha,\beta,b)\in\mathbb Z_{>0}\times\mathbb Z\times\mathbb R$, we have 
\[
\Psi_{\p3}(\alpha,\beta,b)=\frac{\alpha^2}{6}+\frac{\alpha}{2}|b|.
\]
\end{proposition}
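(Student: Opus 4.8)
The plan is to mimic the proof of \Cref{abelianfold}, replacing the semi-homogeneous bundles $E_{p/q}$ by the line bundles $\cO(d)$, $d\in\mathbb Z$, whose Chern characters are $\ch(\cO(d))=e^{dH}$ with $H^3=1$ on $\p3$. Since \Cref{bound} already gives the upper bound $\Psi_{\p3}(\alpha,\beta,b)\le\frac{\alpha^2}{6}+\frac{\alpha}{2}|b|$, it suffices to produce a sequence of $\nu_{\alpha,\beta}$-semistable objects realizing the value $\frac{\alpha^2}{6}+\frac{\alpha}{2}|b|$ in the $\limsup$ defining $\Psi_{\p3,0}$. The natural candidates are $\cO(d)$ for $d>\beta$ (which lies in $\Cohb(\p3)$ since $\mu_\beta(\cO(d))=d-\beta>0$) and $\cO(d)[1]$ for $d\le\beta$ (which lies in $\Cohb(\p3)$). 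Because $\p3$ is assumed to satisfy the generalized Bogomolov-Gieseker inequality, line bundles on $\p3$ are $\nu_{\alpha,\beta}$-stable for all $(\alpha,\beta)$; this is the analogue of \cite[Corollary 3.11]{BMS} and can be invoked directly (alternatively, one checks tilt-stability of $\cO(d)$ by hand using that $\cO(d)$ is slope-stable and has minimal discriminant $\overline\Delta_H=0$).

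First I would compute $\nu_{\alpha,\beta}(\cO(d))$: writing $m=d-\beta$, one gets $H^2\ch_1^\beta=m$, $H\ch_2^\beta=\tfrac12 m^2$, $H^3\ch_0^\beta=1$, hence
\[
\nu_{\alpha,\beta}(\cO(d))=\frac{\alpha\cdot\tfrac12 m^2-\tfrac12\alpha^3}{\alpha^2 m}=\frac{m^2-\alpha^2}{2\alpha m}.
\]
So $\nu_{\alpha,\beta}(\cO(d))\to 0$ as $m\to\alpha^+$, i.e.\ as $d\to(\alpha+\beta)^+$ through integers — which is why the hypothesis $\alpha\in\mathbb Z_{>0}$, $\beta\in\mathbb Z$ is imposed: then $d=\alpha+\beta+1,\alpha+\beta+2,\dots$ no, rather one needs $d$ with $d-\beta$ slightly larger than $\alpha$; since $\alpha,\beta\in\mathbb Z$ we may simply take $d=\alpha+\beta$ exactly, giving $m=\alpha$ and $\nu_{\alpha,\beta}(\cO(\alpha+\beta))=0$ on the nose, so no limiting sequence is even needed on this side. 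Then I would evaluate the defining ratio: with $\ch_3^\beta(\cO(d))=\tfrac16 m^3$ and $\ch_2^\beta=\tfrac12 m^2$,
\[
\frac{\ch_3^\beta(\cO(d))-bH\ch_2^\beta(\cO(d))}{H^2\ch_1^\beta(\cO(d))}=\frac{\tfrac16 m^3-\tfrac12 b m^2}{m}=\frac{m^2}{6}-\frac{bm}{2},
\]
which at $m=\alpha$ equals $\frac{\alpha^2}{6}-\frac{\alpha}{2}b$. Symmetrically, for the shift: $\cO(d)[1]$ with $d=\beta-\alpha$ gives $m=-\alpha<0$, and since all Chern characters change sign under $[1]$, $H^2\ch_1^\beta(\cO(d)[1])=-m=\alpha>0$, $\nu_{\alpha,\beta}(\cO(d)[1])=\nu_{\alpha,\beta}(\cO(d))=\frac{m^2-\alpha^2}{2\alpha m}=0$, and the ratio becomes $\frac{(-m)^3/6 - b\,(-m)^2/2}{-m}\cdot(-1)$... more carefully, $\ch_i^\beta(\cO(d)[1])=-\ch_i^\beta(\cO(d))$, so the ratio is unchanged in the numerator-over-denominator sense and equals $\frac{m^2}{6}-\frac{bm}{2}=\frac{\alpha^2}{6}+\frac{\alpha}{2}b$ at $m=-\alpha$. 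Taking the larger of $\frac{\alpha^2}{6}\mp\frac{\alpha}{2}b$ yields $\Psi_{\p3}(\alpha,\beta,b)\ge\frac{\alpha^2}{6}+\frac{\alpha}{2}|b|$, and combined with \Cref{bound} the equality follows.

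The main obstacle — really the only nontrivial input — is the $\nu_{\alpha,\beta}$-stability (or at least semistability) of the line bundles $\cO(d)$ and their shifts $\cO(d)[1]$ for all $(\alpha,\beta)\in\mathbb R_{>0}\times\mathbb R$, so that they are legitimate test objects in the $\limsup$. On $\p3$ this is known: it follows from the generalized Bogomolov-Gieseker inequality (the running hypothesis on $\p3$) together with the fact that $\cO(d)$ achieves equality $\overline\Delta_H=0$ in the classical Bogomolov-Gieseker inequality, so no destabilizing subobject in $\Cohb(\p3)$ can exist — I would cite \cite[Corollary 3.11]{BMS} or reprove it in two lines. One small subtlety to address: the $\limsup$ in \Cref{Psi} is over $\nu_{\alpha,\beta}$-semistable $F$ with $\nu_{\alpha,\beta}(F)=\mu$ and $\mu\to 0$; since the integrality of $\alpha,\beta$ lets us hit $\mu=0$ exactly with $\cO(\alpha+\beta)$ and $\cO(\beta-\alpha)[1]$, we do not need a genuine limiting argument, which makes this case cleaner than the abelian threefold case. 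If one instead insists on approximating, one takes $d=\alpha+\beta+k$ and lets $k\to 0$ — impossible over $\mathbb Z$ — so the clean statement genuinely requires the integrality hypothesis, and I would remark on this. Hence the proof is essentially a one-line substitution into the computation of \Cref{abelianfold}, with \cite[Corollary 3.11]{BMS} supplying the stability.
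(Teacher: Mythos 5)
Your proposal is correct and coincides with the paper's intended argument: the paper offers no separate proof beyond the remark that one applies the strategy of \Cref{abelianfold} to the twisted bundles $\cO(d)$, which is exactly your computation with $\cO(\alpha+\beta)$ and $\cO(\beta-\alpha)[1]$ (tilt-stable by \cite[Corollary 3.11]{BMS} since $\overline{\Delta}_H=0$), combined with the upper bound of \Cref{bound}. Your observation that the integrality of $(\alpha,\beta)$ is what allows the slope $\nu_{\alpha,\beta}=0$ to be realized exactly by integer twists is precisely the point of the hypothesis.
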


Let us denote 
\[
\mathfrak{B}_{\Psi}:=\left\{(\alpha,\beta,a,b)\in\mathbb R^4\mid \alpha>0,a>\max\{\frac{\alpha^2}{6},\Psi_X(\alpha,\beta,b)\}\right\},
\]
and 
\[
\mathfrak{B}^{\ast}_{\Psi}:=\left\{(\alpha,\beta,a,b)\in\mathbb R^4\mid \alpha>0,a>\Psi_X(\alpha,\beta,b)\}\right\}.
\]
We proceed to state the main result of this section and the proof is based on the idea in \cite{BMT,BMS,D22}:
\begin{theorem}
\label{main1}
Let $(X,H)$ be a polarized threefold. Then, there is a continuous open embedding 
\begin{equation*}
\begin{split}
\Sigma_{\Psi}: \gl2\times\mathfrak{B}_{\Psi}&\to\stabgh(X)\\
(g,(\alpha,\beta,a,b))&\mapsto(Z_{\alpha, \beta}^{a, b}[g], \cA_{\alpha, \beta}[g]).
\end{split}
\end{equation*}
Furthermore, if $\Psi_{X}(\alpha,\beta,b)\ge\frac{\alpha^2}{6}$ for all $(\alpha,\beta,b)\in\mathbb R_{>0}\times\mathbb R^2$, we have 
\[
\Stab^{\mathrm{Geo}\dagger}_{H}(X)\cong\gl2 \times\mathfrak{B}_{\Psi}^{\ast}.
\]
\end{theorem}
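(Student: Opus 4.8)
The plan is to verify the hypotheses of the general construction theorem (in the style of \cite{BMT,BMS,D22}) on the region $\mathfrak{B}_{\Psi}$, using $\Psi_X$ to pin down exactly which central charges $Z^{a,b}_{\alpha,\beta}$ are compatible with the double-tilted heart $\cA_{\alpha,\beta}$. First I would fix $(\alpha,\beta,a,b)\in\mathfrak{B}_{\Psi}$ and check that $(Z^{a,b}_{\alpha,\beta},\cA_{\alpha,\beta})$ is a pre-stability condition: the condition $a>\alpha^2/6$ guarantees the positivity/HN-filtration part (this is essentially the classical input of \cite{BMT}), while $a>\Psi_X(\alpha,\beta,b)$ is the precise numerical inequality needed so that the phase of every $\nu_{\alpha,\beta}$-semistable object with $\nu_{\alpha,\beta}=0$ does not collide with the skyscraper phase — this is where the $\limsup$ in \Cref{Psi} does its work, replacing the cruder bound $\alpha^2/6+\tfrac{\alpha}{2}|b|$ of \Cref{bound} used in \cite{BMS}. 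Then I would invoke the support property: since $Z^{a,b}_{\alpha,\beta}$ depends continuously (indeed algebraically) on the parameters and the generalized Bogomolov–Gieseker inequality of \Cref{Bog} supplies the requisite quadratic form $Q^{\beta}_{\alpha^2}$, the support property holds on an open neighborhood, and a standard argument (deformation of the quadratic form, cf. \cite{BMS}) upgrades this to all of $\mathfrak{B}_{\Psi}$.

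Next I would establish that the resulting stability conditions are geometric with respect to $(\Lambda_H,\lambda_H)$, i.e. land in $\stabgh(X)$: by construction $Z^{a,b}_{\alpha,\beta}(\cO_x)=-1$ and one checks $\cO_x\in\cA_{\alpha,\beta}$ is stable of phase $1$, exactly as in \cite[Proposition 2.1]{MP}; combined with the $\gl2$-action this gives the map $\Sigma_\Psi$ into $\stabgh(X)$. Continuity of $\Sigma_\Psi$ follows from continuity of $(g,Z)\mapsto (Z[g],\cA[g])$ together with \Cref{deformation}. To see that $\Sigma_\Psi$ is an open embedding, I would argue injectivity by recovering $(\alpha,\beta,a,b)$ and $g$ from $\sigma$: the phase and central charge of skyscrapers normalize the $\gl2$-factor, and then $\beta$ is read off from $\mathrm{Coh}^\beta$ being the relevant heart (via which line bundles lie in $\cP(\linterval{0}{1})$), while $(\alpha,a,b)$ are recovered from the coefficients of $Z$ as in \Cref{reduce}; openness then follows from \Cref{deformation} since $\Sigma_\Psi$ is a continuous injection between manifolds of the same dimension whose derivative is invertible (the central charge map is a local iso).

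For the final ``furthermore'' statement, assuming $\Psi_X(\alpha,\beta,b)\ge\alpha^2/6$ everywhere, the region $\mathfrak{B}_{\Psi}$ coincides with $\mathfrak{B}^{\ast}_{\Psi}$, so $\Sigma_\Psi$ already gives an open embedding $\gl2\times\mathfrak{B}^{\ast}_\Psi\hookrightarrow\Stab^{\Geo\dagger}_H(X)$; it remains to show surjectivity onto $\Stab^{\Geo\dagger}_H(X)$. By \Cref{reduce}, every $\sigma\in\Stab^{\Geo\dagger}_H(X)$ has the form $(Z^{a,b}_{\alpha,\beta},\cA_{\alpha,\beta})[g]$ for some $\alpha>0$, and the point is that such a pair can only be a genuine stability condition if $a>\Psi_X(\alpha,\beta,b)$ — this is the converse direction, forced by testing the central charge against the $\nu_{\alpha,\beta}$-semistable objects achieving the $\limsup$: if $a\le\Psi_X(\alpha,\beta,b)$ one produces (in the limit) objects of $\cA_{\alpha,\beta}$ with vanishing or negative-phase central charge, contradicting the stability-function axiom. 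Hence the image is exactly $\gl2\times\mathfrak{B}^{\ast}_\Psi$, giving the claimed homeomorphism.

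The main obstacle is the converse (necessity) direction in the last paragraph: extracting an actual destabilizing or axiom-violating object in $\cA_{\alpha,\beta}$ from the $\limsup$ defining $\Psi_X$ requires care, since the $\limsup$ is only a supremum of a limit of ratios and need not be attained by a single object; one must run a limiting argument along a sequence of $\nu_{\alpha,\beta}$-semistable objects with $\nu_{\alpha,\beta}\to 0$, control their images in $\cA_{\alpha,\beta}$ under the second tilt, and ensure the phase estimate degenerates in the limit. This is the technical heart of the refinement over \cite{BMS}, and is also why the openness of $\stabgh$ and the deformation theorem \Cref{deformation} are needed to pass between ``$\geq$'' and ``$>$'' in the boundary inequalities.
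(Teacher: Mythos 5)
Your skeleton (positivity of the stability function on $\cA_{\alpha,\beta}$, HN filtrations, support property, then the open-embedding argument of \cite[Proposition~8.10]{BMS}, then the ``furthermore'' via necessity of $a>\Psi_X$) is the same as the paper's, and your treatment of positivity is essentially right: the only new input beyond \cite{BMT} is the inequality $\ch_3^{\beta}(F)-bH\ch_2^{\beta}(F)-aH^2\ch_1^{\beta}(F)<0$ for $\nu_{\alpha,\beta}$-semistable $F$ with $\nu_{\alpha,\beta}(F)=0$, which is exactly what $a>\Psi_X(\alpha,\beta,b)$ gives. The genuine gap is the support property, which is the actual content of the refinement. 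You assert that \Cref{Bog} ``supplies the requisite quadratic form $Q^{\beta}_{\alpha^2}$'' and that a deformation argument upgrades the support property from an open neighbourhood to all of $\mathfrak{B}_{\Psi}$. This does not work on the new region $\mathfrak{B}_{\Psi}\setminus\mathfrak{B}$ (i.e.\ $a\le\frac{\alpha^2}{6}+\frac{\alpha}{2}|b|$): there the kernel of $Z^{a,b}_{\alpha,\beta}$ need not be negative definite for any of the forms $Q^{\beta}_K$ of \cite{BMS}, and \Cref{deformation} only gives openness of the stability manifold — it neither shows that the explicit pairs $(Z^{a,b}_{\alpha,\beta},\cA_{\alpha,\beta})$ remain stability conditions throughout $\mathfrak{B}_{\Psi}$ nor produces a quadratic form witnessing the support property there. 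The paper instead builds a new form $S_{\delta}=\delta^{-1}\bigl(H\ch_2^{\beta}-\tfrac{\alpha^2}{2}H^3\ch_0^{\beta}\bigr)^2-H^2\ch_1^{\beta}\bigl(\ch_3^{\beta}-bH\ch_2^{\beta}-(a-\delta)H^2\ch_1^{\beta}\bigr)$, proves $S_{\delta}\ge0$ on all $\nu_{\alpha,\beta}$-semistable objects using the quadratic bound $\Xi_{\alpha,\beta,b}(\nu)$ on $\Psi_{X,\nu}$ from \Cref{bound} — note your argument never invokes $\Psi_{X,\nu}$ for $\nu\neq0$, yet controlling semistable objects of \emph{all} tilt slopes is precisely what is needed — and then perturbs to $S_{\delta,\epsilon}=S_{\delta}+\epsilon Q^{\beta}_{\frac12(\alpha^2+6a)}$, using \Cref{thm:const} for nonnegativity on $\sigma^{a,b}_{\alpha,\beta}$-semistable objects and a compactness argument on the unit sphere of $\Ker Z^{a,b}_{\alpha,\beta}$ for negative definiteness. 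Without some explicit form of this kind, your claim (3) is unproven exactly where the theorem goes beyond \cite{BMS}.

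On the ``furthermore'' statement: the paper obtains the identification $\Stab^{\mathrm{Geo}\dagger}_{H}(X)\cong\gl2\times\mathfrak{B}^{\ast}_{\Psi}$ from the same quadratic-form discussion (after \Cref{reduce}, the support property for $(Z^{a,b}_{\alpha,\beta},\cA_{\alpha,\beta})$ holds if and only if $a>\Psi_X(\alpha,\beta,b)$ when $\Psi_X\ge\frac{\alpha^2}{6}$), whereas you propose to violate positivity by a limiting sequence of semistable objects realizing the $\limsup$ in \Cref{Psi}. That route is plausible but, as you yourself note, incomplete: the $\limsup$ need not be attained, and you would have to track the second tilt along the sequence. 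Both issues point to the same missing ingredient — the quantitative control of $\Psi_{X,\nu}$ and the associated quadratic form — so repairing the support-property step would also give you the cleaner route to the last claim.
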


\begin{proof}
Let $(\alpha,\beta,a,b)\in\mathfrak{B}_{\Psi}$. We claim that 
\begin{enumerate}
\item For any $0\ne E\in\mathcal A_{\alpha,\beta}$, it holds that $Z_{\alpha,\beta}^{a,b}(E)\in\mathbb H_+$;
\item Every $E\in\cA_{\alpha,\beta}$ admits a Harder-Narasimhan filtration with respect to $\sigma_{\alpha,\beta}^{a,b}$;
\item $(Z^{a,b}_{\alpha,\beta})$ satisfies the support property.
\end{enumerate}

Claims (1) and (2) can be deduced using the similar arguments as in \cite[Corollary 5.2.4]{BMT} and \cite[Theorem 8.6]{BMT}. The only point of distinction to note is that, for $F\in\Cohb(X)$ with $\nu_{\alpha,\beta}(F)=0$, we have 
\begin{equation}
\begin{split}
Z_{\alpha,\beta}^{a,b}(F[1])&=\ch_{3}^{\beta}(F) - bH \ch_{2}^{\beta}(F) - aH^{2} \ch_{1}^{\beta}(F)\\
&\le\Psi_X(\alpha,\beta,b)H^2\mathrm{ch}_1^{\beta}(F)-aH^2\ch_1^{\beta}(F) \\
&<0.
\end{split}
\end{equation}
Here, $H^2\ch_1^{\beta}(F)>0$. For (3), consider the following quadratic form
  \[S_{\delta} = \delta^{-1}(H\ch^{\beta}_{2} - \frac{\alpha^{2}}{2}H^{3}ch^{\beta}_{0})^{2} - H^{2}\ch^{\beta}_{1}(\ch^{\beta}_{3} - b H\ch^{\beta}_{2}  - (a - \delta) H^{2}\ch^{\beta}_{1}).\]

 We assert that there exists \(\delta > 0\), such that \(S_{\delta}(E) \geq 0\) for all \(\nu_{\alpha, \beta}\)-semistable \(E\). If $H^2\ch_1^{\beta}(E)=0$, it is evident. Thus, we only need to consider the case where $H^2\ch_1^{\beta}(E)>0$. Let $\nu:=\nu_{\alpha,\beta}(E)$. Then, 
 \[S_{\delta}(E) = (H \ch_{1}^{\beta}(E))^{2}(\delta^{-1} \nu^{2} + a - \delta - \frac{\ch_{3}^{\beta}(E) - b H \ch_{2}^{\beta}(E)}{H^{2} \ch_{1}(E)}).\]

If $H\ch_2^{\beta}(E)-\frac{\alpha^2}{2}H^3\ch_0^{\beta}(E)=0$, i.e. $\nu=0$, we have 
\begin{equation}
\begin{split}
S_{\delta}(E)&=(H\ch_2^{\beta}(E))^2(a-\delta-\frac{\ch_3^{\beta}(E)-bH\ch_2^{\beta}(E)}{H^2\ch_1^{\beta}(E)})\\
&\ge(H\ch_2^{\beta}(E))^2(a-\delta-\Psi_X(\alpha,\beta,b))\\
&>0,
\end{split}
\end{equation}
for $0<\delta<a-\Psi_X(\alpha,\beta,b)$. If $H\ch_2^{\beta}(E)-\frac{\alpha^2}{2}H^3\ch_0^{\beta}(E)\ne0$, then by \Cref{bound}, we have 
\[
 \frac{\ch_{3}^{\beta}(E) - b H \ch_{2}^{\beta}(E)}{H^{2} \ch_{1}^{\beta}(E)}\le\Xi_{\alpha,\beta,b}(\nu).
\]
For a sufficiently small \(\delta > 0\), this yields
  \[\frac{\ch_{3}^{\beta}(E) - bH \ch_{2}^{\beta}(E)}{H^{2} \ch_{1}^{\beta}(E)} < \delta^{-1} \nu^{2} + a - \delta.\]


Therefore, $S_{\delta}(E)\ge0$ for all $\nu_{\alpha,\beta}$-semistable $E$.  When Restricted to $\mathrm{Ker}\,Z_{\alpha,\beta}^{a,b}$, we obtain that $S_{\delta}(E)=-\delta(H^2\ch_1^{\beta}(E))^2$, which is negative semi-definite. Now we fix \(\delta\) and claim that there exists \(\epsilon > 0\), such that
  \[S_{\delta, \epsilon} = S_{\delta} + \epsilon Q_{\frac{1}{2}(\alpha^{2} + 6a)}^{\beta}(E)\]
  fulfils the support condition. By combining \Cref{thm:const} with the preceding discussion, we conclude that $S_{\delta,\epsilon}(E)\ge0$ for all $\sigma_{\alpha,\beta}^{a,b}$-semistable $E$. For $0\ne E\in\mathrm{Ker}\,Z^{a,b}_{\alpha,\beta}$, we have 
\begin{equation}
\begin{split}
\Re(Z_{\alpha,\beta}^{a,b})&=-\ch_3^{\beta}(E)+bH\ch_2^{\beta}(E)+aH^2\ch_1^{\beta}(E)=0;\\
\Im(Z_{\alpha,\beta}^{a,b})&=H\ch_2^{\beta}(E)-\frac{\alpha^2}{2}H^3\ch_0^{\beta}(E)=0.\\
\end{split}
\end{equation}
Hence,
 \[S_{\delta, \epsilon}(E) = (- \delta + \epsilon(\frac{1}{2} \alpha^{2} - 3a))(H^2\ch_{1}^{\beta}(E))^{2} + \epsilon(2 - \frac{12a}{\alpha^{2}})(H\ch_{2}^{\beta}(E))^{2} - 6 \epsilon bH^2\ch_{1}^{\beta}(E)H\ch_{2}^{\beta}(E).\]


If $H^2\ch^{\beta}_1(E)=0$, the claim is clear since $H\ch_2^{\beta}(E)\ne0$. On the other hand, we assert that there is an $\epsilon>0$ such that $S_{\delta,\epsilon}$ is negative definite on $\mathrm{Ker}\,Z_{\alpha,\beta}^{a,b}\backslash A$, where $A:=\{E\in K_{\num}\mid H^2\ch_1^{\beta}(E)=0\}$. 

We fix a norm on $K_{\num}(X)$ and denote $U:=\{E\in \mathrm{Ker}\,Z_{\alpha,\beta}^{a,b}(X)\mid\Vert E\Vert=1\}$. It suffices to find some $\epsilon>0$ such that $S_{\delta,\epsilon}$ is negative definite on $U\backslash A_1$, where $A_1:=\{E\in A\mid \Vert E\Vert=1\}$. Suppse, for the sake of contradiction, that for any $\epsilon>0$, there is $E\in U\backslash A_1$, such that 
\[
P(E):=\frac{Q_{\frac{1}{2}(\alpha^2+6a)}^{\beta}}{\delta(H^2\ch_1^{\beta}(E))^2}\ge\frac{1}{\epsilon}.
\]
This implies that $P$ is not bounded above on $U\backslash A_1$. However, since $Q_{\frac{1}{2}(\alpha^2+6a)}^{\beta}$ is negative on the closed set $A_1$, it must be negative definite on some open neighbourhood $V_1$ of $A_1$. Then, $P$ is negative on $V$. As $U\backslash V$ is compact, $P$ is bounded above $U\backslash A_1$, which contradicts our initial assumption. In conclude, this establishes the existence of the required $\epsilon$.

Following the same argument as in \cite[Proposition 8.10]{BMS}, we ultimately conclude that $\Sigma_{\Psi}$ is a continuois open embedding. 

For the moreover part, given that \(a > \frac{\alpha^2}{6}\), the preceding discussion shows that the support property is satisfied for $(Z_{\alpha,\beta}^{a,b},\cA_{\alpha,\beta})$ if and only if $(\alpha,\beta,a,b)\in\mathfrak{B}^{\ast}_{\Psi}$, which implies that 
\[
\stabh^{\Geo\dagger}\cong\gl2\times\mathfrak{B}^{\ast}_{\Psi}.
\]
\end{proof}

Based on some insights from \cite{FLZ}, we can also characterize the space of stability conditions on polarized abelian threefolds by applying \Cref{main1}:  

\begin{corollary}[{\cite[Theorem 4.8]{FLZ}}]
Let $(X,H)$ be a polarized abelian threefold. Then
\[
\Stab_H(X)\cong\gl2\times\mathfrak{B}_{\Psi}^{\ast}.
\]
In particular, $\Stab_H(X)$ is contractible.
\end{corollary}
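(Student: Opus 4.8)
The plan is to specialize \Cref{main1} to abelian threefolds using the computation of $\Psi_X$ in \Cref{abelianfold}, then upgrade the resulting description of $\Stab^{\Geo\dagger}_H(X)$ to all of $\Stab_H(X)$, and finally read off contractibility factor by factor. First, a polarized abelian threefold $(X,H)$ satisfies the generalized Bogomolov--Gieseker inequality of \Cref{Bog} by \cite{M14,BMS}, so it is a polarized threefold in our sense and \Cref{main1} applies. By \Cref{abelianfold} we have $\Psi_X(\alpha,\beta,b)=\frac{\alpha^2}{6}+\frac{\alpha}{2}\abs{b}\ge\frac{\alpha^2}{6}$ for every $(\alpha,\beta,b)\in\mathbb R_{>0}\times\mathbb R^2$; in particular $\max\{\frac{\alpha^2}{6},\Psi_X\}=\Psi_X$, hence $\mathfrak{B}_\Psi=\mathfrak{B}_\Psi^{\ast}$ and the hypothesis of the ``furthermore'' clause of \Cref{main1} is satisfied. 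That clause then produces a homeomorphism $\gl2\times\mathfrak{B}_\Psi^{\ast}\xrightarrow{\sim}\Stab^{\Geo\dagger}_H(X)$.

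Second, I would prove that $\Stab_H(X)=\Stab^{\Geo\dagger}_H(X)$. The inclusion $\supseteq$ holds by definition, and $\Stab^{\Geo\dagger}_H(X)$ is open in $\Stab_H(X)$, being open inside $\Stab^{\Geo}_H(X)$ which is itself open in $\Stab_H(X)$. For the reverse inclusion one must show that \emph{every} $(\Lambda_H,\lambda_H)$-stability condition $\sigma$ on $\mathcal D^b(X)$ is geometric and, after acting by $\gl2$, attains the normalized form of \Cref{reduce} --- this is exactly \cite[Theorem 4.8]{FLZ}. The mechanism there, which I would follow, is that an abelian threefold carries no full exceptional collection, so there is no competing ``algebraic'' locus, while concretely the Mukai--Polishchuk semi-homogeneous bundles $E_{p/q}$ are $\sigma$-semistable for \emph{every} $\sigma\in\Stab_H(X)$ (cf. \cite{Pol14} and the proof of \Cref{abelianfold}); relating the $E_{p/q}$ and their Fourier--Mukai transforms to the skyscraper sheaves via short exact sequences bounds the phase of each $\mathcal O_x$ from both sides and forces $\mathcal O_x$ to be $\sigma$-stable. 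Then \Cref{reduce} writes $\sigma=(Z^{a,b}_{\alpha,\beta},\cA_{\alpha,\beta})[g]$, so $\sigma\in\Stab^{\Geo\dagger}_H(X)$.

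Combining the two steps yields $\Stab_H(X)\cong\gl2\times\mathfrak{B}_\Psi^{\ast}$, and contractibility follows from the factors separately: $\gl2$ is the universal cover of $\GL^+_2(\mathbb R)\simeq S^1$, hence contractible; and for abelian threefolds $\mathfrak{B}_\Psi^{\ast}=\{(\alpha,\beta,a,b):\alpha>0,\ a>\frac{\alpha^2}{6}+\frac{\alpha}{2}\abs{b}\}$ is homeomorphic to $\mathbb R_{>0}\times\mathbb R^2\times\mathbb R_{>0}$ via $(\alpha,\beta,a,b)\mapsto(\alpha,\beta,b,a-\frac{\alpha^2}{6}-\frac{\alpha}{2}\abs{b})$, hence contractible. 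The only genuinely non-formal ingredient is the exhaustion $\Stab_H(X)=\Stab^{\Geo\dagger}_H(X)$ in the second step: openness of the geometric locus is immediate, but its ``closedness'' --- ruling out that a limiting stability condition destabilizes some $\mathcal O_x$ --- is the heart of the matter, for which I would invoke \cite[Theorem 4.8]{FLZ}; a self-contained argument would have to reproduce the semi-homogeneous-bundle and Fourier--Mukai estimates sketched above.
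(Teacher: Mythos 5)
Your proposal is correct and follows essentially the same route as the paper: invoke \cite[Theorem 4.8]{FLZ} (via the semi-homogeneous bundles $E_{p/q}$) to identify $\Stab_H(X)$ with $\Stab^{\Geo\dagger}_H(X)$, then apply \Cref{abelianfold} together with the ``furthermore'' clause of \Cref{main1}. Your explicit verification that $\gl2$ and $\mathfrak{B}_{\Psi}^{\ast}$ are each contractible simply spells out what the paper leaves implicit.
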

\begin{proof}
According to \cite{FLZ}, we have $\Stab_H(X) = \Stab_H^{\mathrm{Geo}}(X)$. By deforming the family of simple semi-homogeneous vector bundles $\{E_s\}_{s \in \mathbb{Q}}$, we can show that $\Stab_H^{\mathrm{Geo}}(X) \cong \Stab_H^{\mathrm{Geo}\dagger}(X)$, as detailed in \cite{FLZ}. Consequently, applying \Cref{abelianfold} and \Cref{main1}, we derive the desired result.
\end{proof}

\begin{remark}
\label{refine}
Let $(X,H)$ be a polarized threefold and define the set
\[
\mathfrak{B}:=\left\{(\alpha,\beta,a,b)\in\mathbb R^4\mid\alpha>0,a>\frac{\alpha^2}{6}+\frac{\alpha}{2}|b|\right\}.
\]  
As established in \cite[Section 8]{BMS}, there is a continuous open embedding:
\begin{equation*}
\begin{split}
\Sigma:\gl2\times\mathfrak{B}&\to\stabgh(X)\\
(g,(\alpha,\beta,a,b))&\mapsto\sigma_{\alpha,\beta}^{a,b}[g].
\end{split}
\end{equation*}
Based on \Cref{main1} and \Cref{bound}, we have been able to construct an open subset of $\stabgh(X)$ that refines the result in \cite{BMS}. Note that by \cite[Lemma 3.1]{Re23}, $\Im\Sigma$ and $\Im\Sigma_{\Psi}$ are contractible. In particular, if $\Psi_X(\alpha,\beta,b)\ge\frac{\alpha^2}{6}$ for all $(\alpha,\beta,b)\in\mathbb R_{>0}\times\mathbb R^2$, we have $\Stab^{\Geo\dagger}_H(X)$ is contractible.
\end{remark}

\section{Geometric stability conditions on $\p3$}
\label{sec.gldim}
In this section, we focus on the space of geometric stability conditions on $\p3$. We first demonstrate that the the global dimension of the subspace of stability conditions $\Im\Sigma(\p3)$ constructed in \cite{BMS}  is 3 following the approach outlined in \cite{BMT,BMS,Moz22}, and then, we construct a family of stable vector bundles on $\p3$ in the  large volume limit. 

\subsection{Global dimension of geometric stability conditions on $\p3$}

We first state the main result of this section as follows:
\begin{theorem}
\label{main2}
There is an injection
\[
\iota_3:\Im\Sigma(\p3) \hookrightarrow\mathrm{gldim}^{-1}(3),
\]
where $\Sigma(\p3)$ is the map described in \Cref{refine} for $\p3$.
\end{theorem}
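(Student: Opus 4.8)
The plan is to show that for every $\sigma=\sigma_{\alpha,\beta}^{a,b}[g]\in\Im\Sigma(\p3)$ the global dimension equals $3$, and that $\iota_3$ is injective simply because it is the restriction of the already-established open embedding $\Sigma$. Since $\mathrm{gldim}$ is invariant under the $\widetilde{\GL}^+_2(\RR)$-action (it is $\mathbb C$-invariant and the extra $\mathrm{SL}_2$-direction only rescales central charges without changing phase differences of semistable objects), it suffices to treat $\sigma=\sigma_{\alpha,\beta}^{a,b}=(Z_{\alpha,\beta}^{a,b},\cA_{\alpha,\beta})$ itself. So the real content is the two-sided estimate $\mathrm{gldim}\,\sigma_{\alpha,\beta}^{a,b}=3$.

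\emph{Lower bound $\mathrm{gldim}\ge 3$.} First I would produce a witness: for a point $p\in\p3$ we have $\cO_p\in\cP(1)$, and Serre duality gives $\Hom(\cO_p,\cO_p[3])=\Hom(\cO_p,\cO_p)^\vee\ne0$ (using $\omega_{\p3}=\cO(-4)$, the relevant $\Ext^3(\cO_p,\cO_p)\cong H^0(\cO_p)^\vee\ne 0$). By \Cref{lem:StabP3-2}(3), every coherent sheaf lies in $\cP(\linterval{-2}{1})$, and a skyscraper sheaf, having $0$-dimensional support, lies in $\cP(1)$; hence $\cO_p[3]\in\cP(\linterval{1}{4})$ so the non-vanishing $\Hom$ forces the phase of (some HN factor of) $\cO_p[3]$ to be $\ge\phi(\cO_p)+\text{something}$; more carefully, since $\cO_p$ is stable of phase $1$ and $\cO_p[3]$ is stable of phase $4$, we get $\mathrm{Hom}(\cP(1),\cP(4))\ne0$, hence $\mathrm{gldim}\,\sigma\ge 3$.

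\emph{Upper bound $\mathrm{gldim}\le 3$.} This is the main obstacle. I would follow the strategy of \cite{BMT,BMS,Moz22}: one must show that whenever $A\in\cP(\phi)$ and $B\in\cP(\phi')$ are semistable with $\Hom(A,B)\ne0$, then $\phi'-\phi\le 3$. Reducing to stable objects and using the $\mathbb C$-action one may normalize $\phi\in(0,1]$, so one needs $\phi'\le 4$, i.e. any stable $B$ receiving a nonzero map from a stable object of phase in $(0,1]$ has phase $\le 4$; equivalently $B[-1]\in\cP(\le 3)$ whenever there is a nonzero map into $B$. The key classification input is \Cref{lem:StabP3-2}: stable objects of $\cA_{\alpha,\beta}$ of phase in $\linterval{0}{1}$ are, up to shift, sheaves or two/three-term complexes of locally free sheaves with cohomology concentrated in degrees $[-2,0]$, and cohomology sheaves are controlled. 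Then a nonzero morphism $A\to B[k]$ in $\cD^b(\p3)$ unwinds, via the spectral sequence $E_2^{p,q}=\bigoplus_i\Ext^p(\cH^i(A),\cH^{i+q}(B))$ and the fact that $\p3$ has cohomological dimension $3$ (so $\Ext^p=0$ for $p>3$ between sheaves), into constraints bounding $k$ by $3$ plus the spread of the cohomological amplitudes of $A$ and $B$; since both amplitudes are confined to an interval of length $\le 2$ inside $[-2,0]$ by \Cref{lem:StabP3-2}(1),(3), and the phases are pinned to $\linterval{0}{1}$, the worst case gives exactly $\phi'-\phi\le 3$. The delicate point is handling objects in $\cP(1)$ that are \emph{not} skyscrapers — the complexes $0\to E^{-2}\to E^{-1}\to 0$ of \Cref{lem:StabP3-2}(2) — and checking that maps out of such a complex, or into it, still cannot exceed the bound; here one uses that such complexes are supported in degrees $[-2,-1]$ (amplitude $1$) together with Serre duality $\Ext^i(F,\cO_p)=\Ext^{3-i}(\cO_p,F)^\vee$ to convert the problematic $\Ext^3$'s into $\Ext^0$'s that are controlled by the $\cP(\linterval{0}{1})$-membership.

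\emph{Conclusion and injectivity.} Combining the two bounds, $\mathrm{gldim}\,\sigma_{\alpha,\beta}^{a,b}=3$ for all $(\alpha,\beta,a,b)\in\mathfrak B$, and by $\widetilde{\GL}^+_2(\RR)$-invariance the same holds after applying any $g$; hence $\Im\Sigma(\p3)\subset\mathrm{gldim}^{-1}(3)$. Finally, $\iota_3$ is the corestriction of $\Sigma$ to $\mathrm{gldim}^{-1}(3)$, and $\Sigma$ is injective by \cite[Section 8]{BMS} (it is a continuous open embedding), so $\iota_3$ is injective, completing the proof.
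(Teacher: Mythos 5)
Your lower bound (skyscraper sheaves plus Serre duality) and the injectivity step (corestricting the open embedding $\Sigma$) agree with the paper, but your upper bound takes a different route and it has a genuine gap. All the inputs you use there --- \Cref{lem:StabP3-2}, cohomological amplitude in $[-2,0]$, the cohomological dimension $3$ of $\p3$, Serre duality --- hold for \emph{every} geometric stability condition on $\p3$, whereas $\gldim\le 3$ is a quantitative statement tied to the region $\mathfrak B$: as the paper notes (by analogy with \cite{FLLQ}, where it is a theorem that $\gldim$ is non-constant and exceeds $2$ on $\Stab^{\Geo}(\mathbb P^2)$), one does not expect $\gldim\equiv 3$ on all of $\stabg$, so a purely structural argument of the kind you sketch cannot close the bound. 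Concretely, your spectral-sequence estimate only gives $k\le 5$ for a nonzero map $A\to B'[k]$ with $A,B'\in\cP((0,1])$: one needs some $\mathrm{Ext}^p(\cH^i(A),\cH^{i+q+k}(B'))\ne 0$ with $p=-q\in[0,3]$, $i\in[-2,0]$, $i+q+k\in[-2,0]$, and the extreme case $p=3$, $i=-2$, $i+q+k=0$ allows $k=5$, i.e.\ a phase gap close to $6$, not $3$. Ruling out $k=4,5$ would require vanishings such as $\mathrm{Ext}^3(\cH^{-2}(A),\cH^0(B'))\cong\mathrm{Hom}(\cH^0(B'),\cH^{-2}(A)(-4))^\vee=0$, with $\cH^{-2}(A)$ torsion-free and $\cH^0(B')$ arbitrary; your appeal to ``converting $\mathrm{Ext}^3$'s into $\mathrm{Ext}^0$'s'' supplies no such vanishing, and no argument ignoring the constraint $a>\frac{\alpha^2}{6}+\frac{\alpha}{2}|b|$ can.

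The paper's proof supplies exactly this missing quantitative ingredient by a deformation argument that uses the support-property inequality of \Cref{thm:const}: along the path $\sigma^{a,b}_{\alpha,\beta-tc}$ one has $\Im\bigl(Z_t'(F)\overline{Z_t(F)}\bigr)\ge 0$ (\Cref{Zieq}), hence phases of semistable objects do not decrease by \cite[Theorem 3.12]{Moz22}; combined with \Cref{Phi} (tensoring by $\cO(c)$ carries $\sigma_{\alpha,\beta}$ to $\sigma_{\alpha,\beta+c}$), this gives $\mathrm{Hom}(F\otimes L,E)=0$ whenever $E,F$ are semistable with $\phi(E)<\phi(F)$ and $L=\cO(c)$, $c>0$; Serre duality with $L=\cO(4)$ then yields $\mathrm{Hom}(E,F)=\mathrm{Hom}(F\otimes\cO(4),E[3])^\vee=0$ once $\phi(F)>\phi(E)+3$. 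A smaller point: $\gldim$ is invariant under the $\CC$-action but not under all of $\gl2$ in general (the relabeling $f$ distorts non-integer phase differences), so your reduction to $g=\mathrm{id}$ needs the observation that the bound is the integer $3$ and $f$ commutes with integer shifts; the paper instead reruns the deformation argument for $\sigma[g]$, using that the inequality of \Cref{Zieq} persists up to the positive factor $\det g$.
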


In order to prove \Cref{main2}, we need to apply the following inequality:
\begin{theorem}
  \label{thm:const}
  \cite[Lemma 8.5,Theorem 8.7]{BMS} Let \(\sigma_{\alpha, \beta}^{a, b} = (Z_{\alpha, \beta}^{a, b}, \cA_{\alpha, \beta})\in\Im\Sigma\).  Then, for every \(\sigma\)-semistable object \(E\), we have
  \[Q_{K}^{\beta}(E) := K \overline{\Delta}_{H}(E) + \overline{\nabla}^{\beta}_{H}(E) \geq 0\]
  for \(K \in I^{a,b}_{\alpha}\), where \(I_{\alpha}^{a, b}\) is the open interval such that kernel of \(Z_{\alpha, \beta}^{a, b}\) is negative with respect to \(Q_{K}^{\beta}\). In particular, \(\frac{1}{2}(\alpha^{2} + 6a) \in I_{\alpha}^{a, b}\) for all \(b\).
\end{theorem}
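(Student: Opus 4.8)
The plan is to prove the quadratic-form positivity $Q_K^\beta(E)\ge0$ for every $\sigma_{\alpha,\beta}^{a,b}$-semistable object $E$ by bootstrapping from the corresponding statement in tilt stability, which is exactly the generalized Bogomolov--Gieseker inequality recorded in \Cref{Bog}. The key observation is that $Q_K^\beta$ is a linear combination of $\overline\Delta_H$ and $\overline\nabla_H^\beta$, and the latter is precisely the ingredient controlled by the inequality $Q_{\alpha^2}^\beta=\alpha^2\overline\Delta_H+\overline\nabla_H^\beta\ge0$ valid for $\nu_{\alpha,\beta}$-semistable objects. Since $\sigma_{\alpha,\beta}^{a,b}=(Z_{\alpha,\beta}^{a,b},\cA_{\alpha,\beta})$ arises from a double tilt, a $\sigma$-semistable object $E$ is, up to shift and extension, built from $\nu_{\alpha,\beta}$-semistable pieces, so one expects to deduce positivity of $Q_K^\beta(E)$ from positivity on its Harder--Narasimhan/Jordan--H\"older constituents in $\Cohb(X)$.

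First I would verify the final sentence, namely that $\tfrac12(\alpha^2+6a)\in I_\alpha^{a,b}$ for all $b$, which is purely a linear-algebra computation: one checks that $Q_K^\beta$ restricted to $\Ker Z_{\alpha,\beta}^{a,b}$ is negative definite exactly when $K$ lies in some open interval $I_\alpha^{a,b}$, and then substitutes $K=\tfrac12(\alpha^2+6a)$ and uses the explicit description of $\Ker Z_{\alpha,\beta}^{a,b}$ from the real and imaginary parts displayed in the proof of \Cref{main1}. On that kernel, $H\ch_2^\beta=\tfrac{\alpha^2}{2}H^3\ch_0^\beta$ and $\ch_3^\beta=bH\ch_2^\beta+aH^2\ch_1^\beta$, so $Q_K^\beta$ becomes a definite quadratic expression in $H^2\ch_1^\beta$ and $H\ch_2^\beta$ whose sign can be read off directly; this identifies $I_\alpha^{a,b}$ and confirms membership of $\tfrac12(\alpha^2+6a)$.

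The heart of the argument is establishing $Q_K^\beta(E)\ge0$ for all $\sigma$-semistable $E$ when $K\in I_\alpha^{a,b}$. I would argue by the standard strategy of \cite{BMS}: reduce to the case where $E$ is $\sigma$-stable, and then analyze the location of $E$ in the double tilt. If $E$ or $E[-1]$ is already a $\nu_{\alpha,\beta}$-semistable object of $\Cohb(X)$, then $Q_K^\beta(E)\ge0$ follows from $Q_{\alpha^2}^\beta(E)\ge0$ together with the classical Bogomolov inequality $\overline\Delta_H(E)\ge0$, since for $K\ge\alpha^2$ one has $Q_K^\beta=Q_{\alpha^2}^\beta+(K-\alpha^2)\overline\Delta_H\ge0$, and the relevant $K$ in $I_\alpha^{a,b}$ can be compared against $\alpha^2$ using $a>\alpha^2/6$. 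Otherwise one exploits the long exact cohomology sequence of $E$ with respect to the $\Cohb(X)$-t-structure: $E$ sits in a short exact sequence in $\cA_{\alpha,\beta}$ whose outer terms are shifts of $\nu_{\alpha,\beta}$-semistable objects, and one uses that $Q_K^\beta$, being a quadratic form, satisfies a superadditivity property along exact sequences whenever the central charges are aligned. This is where the support-property machinery of \cite[Section 8]{BMS} and the convexity of $Q_K^\beta$ enter.

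\textbf{Main obstacle.} The principal difficulty is the inductive step for objects that are $\sigma$-stable but not themselves (shifts of) $\nu_{\alpha,\beta}$-semistable: controlling $Q_K^\beta$ across the tilting requires showing that the form remains nonnegative on extensions of pieces whose individual $\overline\nabla_H^\beta$ may have mixed signs. The resolution is to combine the two inequalities of \Cref{Bog} with a careful choice of $K\in I_\alpha^{a,b}$, using negativity of $Q_K^\beta$ on $\Ker Z_{\alpha,\beta}^{a,b}$ to run the Bogomolov-type argument (as in the proof of \cite[Theorem 8.7]{BMS}): one projects onto the kernel, where the form is negative definite, and deduces positivity elsewhere by a Hodge-index-style bilinear estimate. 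Making this projection argument uniform in $b$ is exactly what forces the open interval $I_\alpha^{a,b}$ and yields the claimed membership of $\tfrac12(\alpha^2+6a)$.
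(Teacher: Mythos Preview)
Your proposal has a genuine gap in the inductive step. The decomposition you suggest---taking the $\Cohb(X)$-cohomology of a $\sigma$-stable $E\in\cA_{\alpha,\beta}$ to obtain ``outer terms that are shifts of $\nu_{\alpha,\beta}$-semistable objects''---does not do what you need. First, the pieces $H^{-1}_{\Cohb}(E)$ and $H^0_{\Cohb}(E)$ lie in $\cF'_{\alpha,\beta}$ and $\cT'_{\alpha,\beta}$ respectively, but these are extension-closed subcategories, not slices: their objects are only \emph{built from} $\nu_{\alpha,\beta}$-semistable factors and need not be $\nu_{\alpha,\beta}$-semistable themselves. Second, and more fundamentally, the superadditivity inequality $Q_K^\beta(E)\ge\sum_i Q_K^\beta(E_i)$ for a quadratic form negative definite on $\Ker Z$ requires the $Z(E_i)$ to lie on a single ray (same phase). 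A t-structure filtration does not align phases, so the bilinear cross-terms have no sign and the inequality fails.

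The paper's argument avoids this by a wall-crossing induction in the parameter $a$ rather than a structural decomposition at fixed $\sigma$. One first shows that for $a\gg0$ any $\sigma_{\alpha,\beta}^{a,b}$-semistable $E$ is $\nu_{\alpha,\beta}$-semistable up to torsion, establishing the inequality in that regime (your base-case computation $Q_K^\beta=Q_{\alpha^2}^\beta+(K-\alpha^2)\overline\Delta_H\ge0$ is correct here). Then one inducts on the discrete quantity $H\ch_2^\beta(E)-\tfrac{\alpha^2}{2}H^3\ch_0^\beta(E)$ (for rational $\alpha,\beta$): if $E$ is $\sigma_{\alpha,\beta}^{A,b}$-semistable but becomes unstable at $a=A+\epsilon$, take its Harder--Narasimhan filtration there. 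At the wall the HN factors have the \emph{same} phase as $E$, so their central charges are aligned and the superadditivity $Q_K^\beta(E)\ge\sum Q_K^\beta(E_i/E_{i-1})$ is valid; each factor has strictly smaller imaginary part, closing the induction. Irrational $(\alpha,\beta)$ and strictly semistable objects are then handled by openness of stability and Jordan--H\"older filtrations. The key idea you are missing is that the filtration must come from destabilization along the one-parameter family in $a$, not from the tilt.
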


\begin{proof}[Sketch of proof]
  The proof relies on wall-crossing technique. We begin by demonstrating that if an object \(E\) is \(\sigma_{\alpha, \beta}^{a, b}\)-semistable for \(a \gg 0\), then \(E\) is \(\nu_{\alpha, \beta}\)-semistable modulo some torsion part and therefore satisfies the Bogomolov-Gieseker inequality. We then proceed by induction on the value of \(H \ch_{2}^{\beta}(E) - \frac{\alpha^{2}}{2} \ch_{0}^{\beta}(E)\) which is discrete when \(\alpha, \beta\) are rational numbers. For the minimal possible value, there are essentially no subobject in \(\cA_{\alpha, \beta}\) and \(E\) is \(\sigma_{\alpha, \beta}^{a, b}\)-semistable for \(a \gg 0\). Thus, the inequality holds. For the induction step, if \(E\) is \(\sigma_{\alpha, \beta}^{a, b}\)-semistable for \(a \gg 0\). Then the inequality holds. Otherwise, it is semistable for some \(a = A > 0\) but unstable for \(a = A + \epsilon\). We can then consider the Harder-Narasimhan filtration \(0 = E_{0} \subset E_{1} \subset \dots \subset E_{n} = E\) with respect to \(\sigma_{\alpha, \beta}^{A+ \epsilon, b}\), which is independent of small \(\epsilon > 0\). By induction hypothesis, the inequality holds for every Harder-Narasimhan factor. Since \(Q_{K}^{\beta}\) is negative definite with respect to \(Z_{\alpha, \beta}^{a, b}\), we have \(Q_{K}^{\beta}(E) \geq \sum Q_{K}^{\beta}(E_{i}/E_{i - 1}) \geq 0\), and the inequality holds.

  For \(\alpha, \beta\) irrational, we utilize the fact that \(\sigma_{\alpha, \beta}^{a, b}\)-stable object is \(\sigma_{\alpha', \beta'}^{a, b}\)-stable for \((\alpha', \beta')\) in a neighborhood of \((\alpha, \beta)\). This allows us to extend the inequality for stable object follows from the rational case. For semistable case, we use the Jordan-H\"older filtration and use the negative definite property of \(Q_{K}^{\beta}\) again to establish the inequality.
\end{proof}

\Cref{thm:const} yields the following inequality:
\begin{corollary}
\label{Zieq}
  Let \(Z_{t} = Z^{a,b}_{\alpha, \beta - tc}\) with \((\alpha,\beta,a,b)\in\mathfrak{B}\), and \(c \ge 0\). Then we have
  \[\Im (Z_{t}'(E) \overline{Z_{t}(E)})|_{t = 0} \geq 0\]
  if \(E\) is \(Z_{0}\)-semistable.
\end{corollary}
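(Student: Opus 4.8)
The plan is to compute $\Im\big(Z_t'(E)\overline{Z_t(E)}\big)$ at $t=0$ explicitly and then reduce the desired inequality to \Cref{thm:const}. Using $\ch^{\beta-tc} = e^{tcH}\ch^{\beta}$, I would first rewrite $Z_t(E)$ in the lattice coordinates $d_0 := H^3\ch_0(E)$, $d_1 := H^2\ch_1^{\beta}(E)$, $d_2 := H\ch_2^{\beta}(E)$, $d_3 := \ch_3^{\beta}(E)$; only $d_1,d_2,d_3$ depend on $t$, with $d_1'(0)=c\,d_0$, $d_2'(0)=c\,d_1$, $d_3'(0)=c\,d_2$, while $d_0$ is $\beta$-independent. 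Since $\Im(Z_0'\overline{Z_0}) = (\Im Z_0)'\,(\Re Z_0) - (\Re Z_0)'\,(\Im Z_0)$, a short substitution gives $\Im\big(Z_0'(E)\overline{Z_0(E)}\big) = c\cdot \mathcal B(E)$ for an explicit quadratic form $\mathcal B(E)$ in $(d_0,d_1,d_2,d_3)$ (homogeneous of degree two, hence shift-invariant). Because $c\geq 0$, it remains to show $\mathcal B(E)\geq 0$ whenever $E$ is $Z_0$-semistable; note $Z_0 = Z^{a,b}_{\alpha,\beta}$, so this is exactly $\sigma^{a,b}_{\alpha,\beta}$-semistability and \Cref{thm:const} is applicable.

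The heart of the argument is the algebraic identity $6\,\mathcal B(E) = Q^{\beta}_{K}(E) + R(E)$ with $K := \tfrac12(\alpha^2+6a)$, where $R(E)$ is a quadratic form that I claim is non-negative on \emph{every} class, with no stability needed. Completing the square in $d_2$ splits off a term $2\,(\Im Z_0(E))^2 \geq 0$ and leaves the binary quadratic form $q(d_0,d_1) = \tfrac12(6a-\alpha^2)\,d_1^2 + 3\alpha^2 b\,d_0 d_1 + \tfrac{\alpha^2}{2}(6a-\alpha^2)\,d_0^2$. Here the hypothesis $(\alpha,\beta,a,b)\in\mathfrak B$ enters decisively: $a>\tfrac{\alpha^2}{6}$ makes the leading coefficient $\tfrac12(6a-\alpha^2)$ positive, and the discriminant inequality $9\alpha^2 b^2 \leq (6a-\alpha^2)^2$ is precisely equivalent to $a\geq \tfrac{\alpha^2}{6}+\tfrac{\alpha}{2}|b|$, which is the defining condition of $\mathfrak B$. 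Hence $q$ is positive semidefinite and $R(E)\geq 0$.

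Putting the pieces together: for $Z_0$-semistable $E$, \Cref{thm:const} gives $Q^{\beta}_{K}(E)\geq 0$ because $K=\tfrac12(\alpha^2+6a)\in I^{a,b}_{\alpha}$; combined with $R(E)\geq 0$ this yields $6\,\mathcal B(E)\geq 0$, so $\Im\big(Z_0'(E)\overline{Z_0(E)}\big) = c\,\mathcal B(E)\geq 0$. I expect the only delicate point to be the bookkeeping that produces the identity $6\,\mathcal B = Q^{\beta}_{K}+R$ and the recognition that $R$ decomposes as $2(\Im Z_0)^2$ plus the binary form $q$; everything else is a mechanical verification. A reassuring sanity check is that the exact defining inequality of $\mathfrak B$ reappears as the positive-semidefiniteness criterion for $q$ — this is no accident, but precisely why \Cref{thm:const}, and hence this corollary, is stated over $\mathfrak B$.
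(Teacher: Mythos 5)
Your proposal is correct and follows essentially the same route as the paper: compute $\Im(Z_t'(E)\overline{Z_t(E)})|_{t=0}=c\,\mathcal B(E)$, split off $\tfrac16 Q^{\beta}_{\frac12(\alpha^2+6a)}$ (handled by \Cref{thm:const}), and check that the remainder is $\tfrac13(\Im Z_0)^2$ plus a binary quadratic form in $(\zeta_0,\zeta_1)$ that is nonnegative exactly because $a\ge\tfrac{\alpha^2}{6}+\tfrac{\alpha}{2}|b|$. The only cosmetic difference is that you verify the binary form is positive semidefinite via its discriminant, while the paper substitutes the bound on $a$ and completes the square.
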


\begin{proof}
  \[Z_{t}'(E)|_{t = 0} = - cH \ch_{2}^{\beta}(E) + bcH^{2} \ch_{1}^{\beta} + ac H^{3} \ch_{0}^{\beta}(E) + i \alpha c H^{2} \ch_{1}^{\beta}(E)\]
 For simplicity, let \(\zeta_{i} := H^{3 - i} \ch_{i}^{\beta}(E)\). Then,
  \begin{eqnarray*}
    \Im (Z_{t}'(E) \overline{Z_{t}(E)})|_{t = 0} & = & c \zeta_{2}^{2} - (ac + \frac{\alpha^{2}c}{2}) \zeta_{0} \zeta_{2} + \frac{\alpha^{2}}{2} bc \zeta_{0} \zeta_{1} + \frac{\alpha^{2} ac}{2} \zeta_{0}^{2} - c \zeta_{1} \zeta_{3} + ac \zeta_{1}^{2}\\
                                                 & = & c(\frac{1}{6}Q_{\frac{1}{2}(\alpha^{2} + 6a)} + \frac{1}{3} \zeta_{2}^{2} - \frac{\alpha^{2}}{3} \zeta_{0}\zeta_{2} + (\frac{a}{2} - \frac{\alpha^{2}}{12} \xi_{1})^{2} + \frac{\alpha^{2}b}{2} \zeta_{0} \zeta_{1} + \frac{\alpha^{2}a}{2} \zeta_{0}^{2}) \\
                                                 & \geq & c (\frac{1}{6}Q_{\frac{1}{2}(\alpha^{2} + 6a)} + \frac{1}{3} \zeta_{2}^{2} - \frac{\alpha^{2}}{3} \zeta_{0} \zeta_{2} + \frac{\abs{b} \alpha}{4} \zeta_{1}^{2} + \frac{\alpha^{2}}{2} \zeta_{0}\zeta_{1} + \frac{\alpha^{2}}{2}(\frac{\alpha^{2}}{6} + \frac{\abs{b} \alpha}{2}) \zeta_{0}^{2}) \\
                                                 & = & c(\frac{1}{6}Q_{\frac{1}{2}(\alpha^{2} + 6a)} + \frac{1}{3}( \zeta_{2} - \frac{\alpha^{2}}{2} \zeta_{0})^{2} + \frac{\abs{b} \alpha}{4}(\xi_{1} \pm \xi_{0})^{2}) \\
    & \geq & 0
  \end{eqnarray*}
  Note that the choice of \(\pm\) in the second to last line depend on the sign of \(b\).
\end{proof}

We now define a functor associated with twisted line bundles on $\p3$:
\begin{definition}
\label{functorL}
Let $L=\mathcal O_{\p3}(c)$ be a line bundle on $\p3$, for some $c\in\mathbb Z$. Then we define an automorphism $\Phi_L\in\mathrm{Aut}(\mathcal D^b(\p3))$ induced by $L$ as follows:
\[
\Phi_L:\mathcal D^b(\p3)\to\mathcal D^{b}(\p3), \quad E\mapsto E\otimes L.
\]
\end{definition}
\begin{lemma}
\label{Phi}
$\Phi_L(\sigma^{a,b}_{\alpha,\beta})=\sigma^{a,b}_{\alpha,\beta+c}$. Here, \(\sigma_{\alpha, \beta}^{a,b} = (Z^{a,b}_{\alpha, \beta}, \cA_{\alpha, \beta}) \in\Stab^{\Geo\dagger}(\p3)\).
\end{lemma}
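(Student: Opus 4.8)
The whole statement reduces to one elementary identity. Writing $L=\cO_{\p3}(c)$, so that $\ch(L)=e^{cH}$, I would first record that for every $E\in\cD^b(\p3)$
\[
\ch^{\beta+c}(E\otimes L)=e^{-(\beta+c)H}\,e^{cH}\,\ch(E)=e^{-\beta H}\ch(E)=\ch^{\beta}(E),
\]
equivalently $\ch^{\beta}_i(\Phi_L^{-1}E)=\ch^{\beta+c}_i(E)$ for all $i$. Recall that the automorphism action sends $\sigma=(Z,\cP)$ to $\Phi_L\sigma=(Z\circ\Phi_L^{-1},\,\Phi_L\cP_{\bullet})$, and that a stability condition is determined by its heart together with the induced stability function. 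So the plan is to show that the heart of $\sigma^{a,b}_{\alpha,\beta}$ is carried onto the heart of $\sigma^{a,b}_{\alpha,\beta+c}$ and that the transported central charge equals $Z^{a,b}_{\alpha,\beta+c}$.

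For the heart I would trace the double tilt. The twist $\Phi_L$ is an exact autoequivalence of $\cD^b(\p3)$ that restricts to an autoequivalence of $\Coh(\p3)$, and on a torsion-free sheaf $F$ it shifts the slope: $\mu_{\beta}(\Phi_L F)=\mu(F)+c-\beta=\mu_{\beta-c}(F)$. Hence it sends $\mu_{\beta-c}$-semistable sheaves of positive (resp.\ non-positive) slope to $\mu_{\beta}$-semistable sheaves of positive (resp.\ non-positive) slope, so $\Phi_L(\cT_{\beta},\cF_{\beta})=(\cT_{\beta+c},\cF_{\beta+c})$ and therefore $\Phi_L(\Coh^{\beta}(\p3))=\mathrm{Coh}^{\beta+c}(\p3)$. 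Applying the twist identity to the second tilt, for $E\in\Coh^{\beta}(\p3)$ one gets $\nu_{\alpha,\beta+c}(\Phi_L E)=\nu_{\alpha,\beta}(E)$, because the defining expression of $\nu_{\alpha,\beta+c}$ is built exactly from the $\ch^{\beta+c}_i$. Consequently $\Phi_L$ preserves $\nu$-semistability and the sign of $\nu_{\alpha,\beta}$, whence $\Phi_L(\cT'_{\alpha,\beta},\cF'_{\alpha,\beta})=(\cT'_{\alpha,\beta+c},\cF'_{\alpha,\beta+c})$ and $\Phi_L(\cA_{\alpha,\beta})=\cA_{\alpha,\beta+c}$.

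For the central charge, note that $Z^{a,b}_{\alpha,\beta}$ as in \Cref{reduce} is a fixed linear combination of the $\ch^{\beta}_i$ whose coefficients depend only on $a,b,\alpha$ and not on $\beta$; substituting $\ch^{\beta}_i(\Phi_L^{-1}E)=\ch^{\beta+c}_i(E)$ then gives $Z^{a,b}_{\alpha,\beta}\circ\Phi_L^{-1}=Z^{a,b}_{\alpha,\beta+c}$. Combining the two paragraphs, $\Phi_L\sigma^{a,b}_{\alpha,\beta}$ is the stability condition with heart $\cA_{\alpha,\beta+c}$ and central charge $Z^{a,b}_{\alpha,\beta+c}$, so it equals $\sigma^{a,b}_{\alpha,\beta+c}$; in particular the slicings agree, since $\Phi_L$ carries $Z^{a,b}_{\alpha,\beta}$-(semi)stable objects of phase $\phi$ to $Z^{a,b}_{\alpha,\beta+c}$-(semi)stable objects of the same phase. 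Finally $\Phi_L$ fixes every skyscraper sheaf $\cO_x$, so it preserves geometricity and $\sigma^{a,b}_{\alpha,\beta+c}\in\Stab^{\Geo\dagger}(\p3)$.

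There is no genuine obstacle here; the only points needing care are the bookkeeping of the shift (one must land on $\beta+c$ rather than $\beta-c$, which is forced by the identity $\ch^{\beta+c}(E\otimes L)=\ch^{\beta}(E)$) and the placement of $\Phi_L$ versus $\Phi_L^{-1}$ in the automorphism action. Once the twist identity is in hand, the invariance of both tilts and of the central-charge formula is immediate.
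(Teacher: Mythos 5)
Your proposal is correct and follows essentially the same route as the paper: the key identity $\ch^{\beta+c}(E\otimes L)=\ch^{\beta}(E)$ gives the central-charge statement immediately, and the heart is handled by checking that the (tilt) slopes are preserved so that the torsion pairs, hence $\cA_{\alpha,\beta}$, are carried to their $\beta+c$ counterparts. The only cosmetic difference is bookkeeping: you verify the first tilt $\Phi_L(\Coh^{\beta})=\Coh^{\beta+c}$ explicitly and conclude the torsion pairs match, whereas the paper transports Harder--Narasimhan filtrations for $\nu_{\alpha,\beta}$ and then uses that an inclusion of hearts of bounded t-structures is an equality.
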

\begin{proof}
  Notice that
  \[\ch(E \otimes L^{-1}) = e^{-cH} \ch(E) = \ch^{c}(E)\]
  For $E\in K_{\num}(\p3)$,
\begin{equation}
\begin{split}
\Phi_L(Z^{a,b}_{\alpha,\beta}(E))&=Z^{a,b}_{\alpha,\beta}(E\otimes L^{-1})\\
&=Z^{a, b}_{\alpha,\beta+c}(E).
\end{split}
\end{equation}
By definition, $\Phi_L(\mathcal A_{\alpha,\beta})=\langle\mathcal T'_{\alpha,\beta}\otimes L,\mathcal F'_{\alpha,\beta}[1]\otimes L \rangle$, and
$\mathcal A_{\alpha,\beta+c}=\langle\mathcal T'_{\alpha,\beta+c},\mathcal F'_{\alpha,\beta+c}[1] \rangle$, where
\begin{equation*}
\begin{split}
\mathcal T'_{\alpha,\beta+c}&=\{E\in\mathrm{Coh}^{\beta+c}(X): \text{ any quotient } E\twoheadrightarrow G \text{ satisfies } \nu_{\alpha,\beta+c}(G)>0\};\\
\mathcal F'_{\alpha,\beta+c}&=\{E\in\mathrm{Coh}^{\beta+c}(X): \text{ any  subsheaf } F\hookrightarrow E\text{ satisfies } \nu_{\alpha,\beta+c}(F)\le0\}.
\end{split}
\end{equation*}
Let $E\in\mathcal T'_{\alpha,\beta}$ with the following Harder-Narasimhan filtration with respect to $\nu_{\alpha,\beta}$-stability:
\[
0=E_0\subset E_1\subset\cdots\subset E_n=E,
\]
where $F_i=E_i/E_{i-1}$ is $\nu_{\alpha,\beta}$-semistable with $\nu_{\alpha,\beta}(F_i)>\nu_{\alpha,\beta}(F_{i+1})$ for all $i$. Then we have another filtration:
\[
0=E_0\otimes L\subset E_1\otimes L\subset\cdots\subset E_n\otimes L=E\otimes L.
\]
We claim that the new filtration is also a Harder-Narasimhan filtration with respect to $\nu_{\alpha,\beta+c}$-stability, i.e. $F_i'=F_i\otimes L$ is $\nu_{\alpha,\beta+c}$-semistable with $\nu_{\alpha,\beta+c}(F_i)>\nu_{\alpha,\beta+c}(F_{i+1})$ for all $i$. For any $0\ne F\subsetneq F_i'$, we have
\begin{equation}
\begin{split}
\nu_{\alpha,\beta+c}(F)&=\frac{\Im(Z_{\alpha,\beta+c}^{a,b}(F))}{(\alpha H)^2\mathrm{ch}_1^{\beta+c}(F)}\\
&=\frac{\Im(Z_{\alpha,\beta}^{a,b}(F\otimes L^{-1}))}{(\alpha H)^2\mathrm{ch}_1^{\beta}(F\otimes L^{-1})}\\
&=\nu_{\alpha,\beta}(F\otimes L^{-1})\\
&\le\nu_{\alpha,\beta}(F_i)\\
&=\nu_{\alpha,\beta+c}(F_i'),
\end{split}
\end{equation}
and
\[
\nu_{\alpha,\beta+c}(F_i')=\nu_{\alpha,\beta}(F_i)>\nu_{\alpha,\beta}(F_{i+1})=\nu_{\alpha,\beta+c}(F_{i+1}'),
\]
which implies that $F_i'$ is $\nu_{\alpha,\beta+c}$-semistable with $\nu_{\alpha,\beta+c}(F_i)>\nu_{\alpha,\beta+c}(F_{i+1})$ for all $i$. Note that
\[
\nu_{\alpha,\beta+c}(F_n')=\nu_{\alpha,\beta}(F_n)>0.
\]
Then, we have $\mathcal T'_{\alpha,\beta}\otimes L\subset\mathcal T'_{\alpha,\beta+c}$. Similarly, we can show $\mathcal F'_{\alpha,\beta}[1]\otimes L\subset\mathcal F'_{\alpha,\beta+c}[1]$. Therefore, $\Phi_L(\mathcal A_{\alpha,\beta})\subset\mathcal A_{\alpha,\beta+c}$. As both are hearts of bounded t-structure, we obtain $\Phi_L(\mathcal A_{\alpha,\beta})=\mathcal A_{\alpha,\beta+c}$. Hence, we conclude that $\Phi_L(\sigma^{a,b}_{\alpha,\beta})=\sigma^{a,b}_{\alpha,\beta+c}$.
\end{proof}

Based on the discussion above, we can now provide the proof of \Cref{main2}:
\begin{proof}\emph{(of \Cref{main2})}
Let $\sigma^{a,b}_{\alpha,\beta}=(Z^{a,b}_{\alpha,\beta},\mathcal A_{\alpha,\beta})\in\Im\Sigma(\p3)$. For any $x\in\p3$,
\[
\mathrm{Ext}^3(\mathcal O_x,\mathcal O_x)=\mathrm{Hom}(\mathcal O_x,\mathcal O_x)^{\vee}\ne0.
\]
Since $\mathcal O_x$ is $\sigma^{a,b}_{\alpha,\beta}$-stable, we obtain $\mathrm{gldim}(\sigma^{a,b}_{\alpha,\beta})\ge3$. For simplicity, denote by $\phi^{a,b}_{\alpha,\beta}:=\phi_{\sigma^{a,b}_{\alpha,\beta}}$. Let $E,F$ be any $\sigma^{a,b}_{\alpha,\beta}$-semistable objects such that $\phi^{a,b}_{{\alpha,\beta}}(E)<\phi^{a,b}_{{\alpha,\beta}}(F)$. We claim that for any ample line bundle $L=\mathcal O(cH)$ on $\p3$, where $c>0$, $\mathrm{Hom}(F\otimes L,E)=0$.
 Consider the family of stability conditions
\[
(Z_t,\mathcal P_t):=\sigma^{a,b}_{\alpha,\beta-tc}, \quad t\in[0,1],
\]
with phase function $\phi_t$. By \Cref{Zieq}, we have $\Im(Z'_t(F)\cdot\overline{Z_t(F)})\ge0$. Due to \cite[Theorem 3.12]{Moz22}, it follows that $\phi_1^-(F)\ge\phi_0(F)$, i.e. $\phi^{a,b, -}_{\alpha,\beta-c}(F)\ge\phi^{a,b}_{\alpha,\beta}(F)$. According to \Cref{Phi}, the set of semistable objects in $\mathcal A_{\alpha,\beta}$ with respect to $Z^{a,b}_{\alpha,\beta}$ coincide with the set of semistable objects $\mathcal A^{a,b}_{\alpha,\beta-c}$ with respect to $Z^{a,b}_{\alpha,\beta-c}$. Then, we derive
\[
\phi^{a,b}_{\alpha,\beta-c}(F)\ge\phi^{a,b}_{\alpha,\beta}(F)>\phi^{a,b}_{\alpha,\beta}(E)=\phi^{a,b}_{\alpha,\beta-c}(E\otimes L^{-1}).
\]
Hence,
\[
\mathrm{Hom}(F\otimes L,E)\simeq\mathrm{Hom}(F,E\otimes L^{-1})=0.
\]
Now we suppose $\phi^{a,b}_{\alpha,\beta}(F)>\phi^{a,b}_{\alpha,\beta}(E)+3$. By Serre duality,
\[
\mathrm{Hom}(E,F)=\mathrm{Hom}(F\otimes\mathcal O(4),E[3])=0.
\]
The last equality follows from $\phi^{a,b}_{\alpha,\beta}(E[3])<\phi^{a,b}_{\alpha,\beta}(F)$. In conclude, we obtain for any $\sigma^{a,b}_{\alpha,\beta}$-semistable objects $E,F$ with $\phi^{a,b}_{\alpha,\beta}(F)>\phi^{a,b}_{\alpha,\beta}(E)+3$, then $\mathrm{Hom}(E,F)=0$, which implies $\gldim(\sigma^{a,b}_{\alpha,\beta})\le3$.

For a general \(\sigma \in \Im\Sigma(\p3)\), we have \(\sigma =\sigma_{\alpha, \beta}^{a,b}[g] = (Z_{\alpha,\beta}[g], \cA_{\alpha,\beta}[g])\) for some \(g \in \gl2\). We can then consider the family \((Z_{t}, \cP_{t}) := \sigma_{\alpha, \beta}^{a,b}[g]\), and observe that
\[\Im (Z_{t}'(E)\overline{Z_{t}(E)})|_{t = 0} = \abs{g} (Z_{\alpha, \beta - ct}^{a,b,\prime}(E)\overline{Z_{\alpha, \beta - ct}^{a,b}(E)})|_{t = 0} \geq 0\]
for any \(Z_{0}\)-semistable object \(E\). By repeating the above procedure for \(\sigma_{\alpha, \beta}^{a,b}\), we obtain the required result.
\end{proof}

\begin{remark}
According to \cite[Theorem 4.2 and Exercise 2.5]{KOT}, we have $\mathrm{gldim}(\mathcal D^b(\mathbb P^n))\ge n$. The equality follows immediately as there is a stability condition $\sigma$ with $\mathrm{gldim}(\sigma)=n$, where the heart of $\sigma$ is the Beilinson heart $\mathcal H$ with simples
\[
\mathrm{Sim}\,\mathcal H=\mathcal O[n],\mathcal O(1)[n-1],\cdots,\mathcal O(n)
\]
and the central charges $Z(\cO(i)[n-i])=1$. Then we have $\mathfrak{B}$ is a $\mathrm{gldim}$-reachable open subset of $\Stab(\p3)$ by \Cref{main2}.
\end{remark}

\begin{remark}
Extending our method to general projective threefolds is tricky for a couple of reasons. For one, we rely on the Serre functor and need the anticanonical divisor \(-K_X\) to be ample to derive the inequality. Moreover, even in the case of Fano threefolds, it's not always the case that a projective threefold has a Picard rank of \(1\). As a result, the canonical divisor \(K_X\) and a generic divisor \(H\) might not be propotional, which leaves the existence of Bridgeland stability for the form \(\sigma_{\alpha H,\beta H - tK_X}\) up in the air. In addition, it requires some ajustments to the inequalities stated in Theorem \ref{thm:const}.
\end{remark}

\subsection{Large volume limit}
The key technical strategy outlined in \cite{FLZ} for describing the space of stability conditions on polarized abelian threefolds relies on the use of a set of simple semi-homogeneous vector bundles. These bundles are stable with respect to any numerical stability condition, and their slopes form a dense subset of $\mathbb{R}$. In this section, we construct a family of vector bundles whose slopes are dense over $\mathbb R$ and which are stable with repect to stability conditions on $\mathcal{D}^b(\mathbb{P}^3)$ in the large volume limit. This serves as an initial attempt to gain a more profound insight on $\Stab(\p3)$.

Let us consider $\sigma_t=(Z_t,\cP_t)=\sigma_{t\alpha,\beta}\in\Stab^{\Geo\dagger}(\p3)$, for $t>0$. We will denote an object $0\ne E\in\mathcal D^b(\p3)$ as $\sigma_{\infty}$-semistable (resp. stable) (or semistable (resp. stable) in the \emph{large volume limit}) if $E$ is $\sigma_t$-semistable (resp. stable) for $t\gg0$. Similarly, we have the notion of $\nu_{\infty}$-semstable (resp. stable). We denote an object $E\in\cP_{\infty}(I)$, if $E\in\cP_t(I)$ for $t\gg0$. We now review a result that identifies slope stability and $\nu_{\infty}$-stability in the large volume limit under some conditions.

\begin{lemma}[{\cite[Lemma 1.4]{GHS}}]
\label{stonu}
Let $E\in\Cohb(\p3)$ satisfying that $\mu_{\beta}(E)>0$ and $(\mathrm{ch}_0(E),\mathrm{ch}_1(E))$ is primitive. Then $E$ is slope stable if and only if $E$ is $\nu_{\infty}$-stable. 
\end{lemma}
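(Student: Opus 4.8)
The plan is to read off the asymptotic shape of the tilt-slope in the large volume limit and to obtain \emph{uniform} control over destabilizing subobjects from the classical Bogomolov--Gieseker inequality together with the discreteness of slopes on $\p3$. Write the volume parameter as $t\alpha$ with $\alpha$ fixed and $t\to+\infty$. For $G\in\Cohb(\p3)$ with $H^{2}\mathrm{ch}_{1}^{\beta}(G)>0$,
\[
\nu_{t\alpha,\beta}(G)=\frac{1}{t\alpha}\cdot\frac{H\mathrm{ch}_{2}^{\beta}(G)}{H^{2}\mathrm{ch}_{1}^{\beta}(G)}-\frac{t\alpha}{2}\cdot\frac{H^{3}\mathrm{ch}_{0}^{\beta}(G)}{H^{2}\mathrm{ch}_{1}^{\beta}(G)},
\]
so when $\mathrm{ch}_{0}(G)\neq 0$ the leading term is $-t\alpha/(2\mu_{\beta}(G))$ and $x\mapsto-1/x$ is strictly increasing on $\mathbb R_{>0}$, while when $\mathrm{ch}_{0}(G)=0$ the slope is $+\infty$ (if $H^{2}\mathrm{ch}_{1}^{\beta}(G)=0$) or tends to $0$. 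First I would note, using the long exact sequence of cohomology sheaves, $\mathcal H^{-1}(E)=0$, and \Cref{nuch1}, that one may reduce to testing against subsheaves $F\subset E$ lying in $\mathcal T_{\beta}$ (a standard consequence of the tilt construction, cf. \cite{BMT,BMS}), so that $\mu_{\beta}(F)>0$, $0<H^{2}\mathrm{ch}_{1}^{\beta}(F)\le H^{2}\mathrm{ch}_{1}^{\beta}(E)$, and the $\Cohb$-cokernel again obeys the trichotomy of \Cref{nuch1}. Finally, since $\mathrm{Pic}(\p3)$ has rank one the slopes $\mu_{\beta}(F)$ of such subsheaves lie in a fixed discrete set, so $\mu_{\ast}:=\sup\{\mu_{\beta}(F)\mid F\subset E,\ \mu_{\beta}(F)<\mu_{\beta}(E)\}<\mu_{\beta}(E)$.

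For the main direction, assume $E$ is slope stable and let $F\subsetneq E$ be a $\nu_{t\alpha,\beta}$-semistable subobject; I must show $\nu_{t\alpha,\beta}(F)<\nu_{t\alpha,\beta}(E)$ for $t\gg 0$, \emph{uniformly} in $F$, except when $H^{2}\mathrm{ch}_{1}^{\beta}(E/F)=0$ (in which case $\nu_{t\alpha,\beta}(E/F)=+\infty$ and $F$ cannot destabilize). When $H^{2}\mathrm{ch}_{1}^{\beta}(E/F)>0$, slope stability and the see-saw property give $\mu_{\beta}(F)<\mu_{\beta}(E)$, hence $\mu_{\beta}(F)\le\mu_{\ast}$; and the Bogomolov--Gieseker inequality of \Cref{Bog} for $\nu_{t\alpha,\beta}$-semistable objects yields
\[
\frac{H\mathrm{ch}_{2}^{\beta}(F)}{H^{2}\mathrm{ch}_{1}^{\beta}(F)}\le\frac{H^{2}\mathrm{ch}_{1}^{\beta}(F)}{2H^{3}\mathrm{ch}_{0}^{\beta}(F)}=\frac{\mu_{\beta}(F)}{2}\le\frac{\mu_{\beta}(E)}{2}.
\]
Substituting these two bounds into the asymptotic expansion, $\nu_{t\alpha,\beta}(F)-\nu_{t\alpha,\beta}(E)$ is at most $C/(t\alpha)-\tfrac{t\alpha}{2}\bigl(\mu_{\ast}^{-1}-\mu_{\beta}(E)^{-1}\bigr)$ with $C$ depending only on $E$, and since $\mu_{\ast}^{-1}-\mu_{\beta}(E)^{-1}>0$ is fixed this is negative for all $t$ past a single threshold depending only on $E$, $\alpha$, $\mu_{\ast}$. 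An arbitrary proper subobject is reduced to its first Harder--Narasimhan factor $F_{1}$ for $\nu_{t\alpha,\beta}$, which is $\nu_{t\alpha,\beta}$-semistable and, by monotonicity of $H^{2}\mathrm{ch}_{1}^{\beta}$ along $F_{1}\subseteq F\subsetneq E$, again has $\mu_{\beta}(F_{1})<\mu_{\beta}(E)$ in the relevant case; hence $\nu^{\max}_{t\alpha,\beta}(E)=\nu_{t\alpha,\beta}(E)$ and, tracking the strict inequalities via $\nu(F)\le\nu(F_{1})<\nu(E)<\nu(E/F)$, the object $E$ is $\nu_{t\alpha,\beta}$-stable. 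This gives $\nu_{\infty}$-stability.

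For the converse it suffices to exhibit a single destabilizer when $E$ is not slope stable. If $E$ is not slope semistable, take $F\subset E$ its maximal $\mu_{\beta}$-destabilizing subsheaf, which lies in $\mathcal T_{\beta}$ and has $\mu_{\beta}(F)>\mu_{\beta}(E)$ (or $\mathrm{ch}_{0}(F)=0$ if $E$ has torsion); in either case the asymptotic expansion gives $\nu_{t\alpha,\beta}(F)>\nu_{t\alpha,\beta}(E)$ for $t\gg 0$, so $E$ is not $\nu_{t\alpha,\beta}$-stable. If instead $E$ were slope semistable but not slope stable, a proper subsheaf $F$ with $\mu_{\beta}(F)=\mu_{\beta}(E/F)$ must have $\mathrm{ch}_{0}(E/F)\neq 0$ (since $\mu_{\beta}(E)$ is finite), forcing $(\mathrm{ch}_{0}(F),\mathrm{ch}_{1}(F))$ to be a proper positive rational multiple of the primitive vector $(\mathrm{ch}_{0}(E),\mathrm{ch}_{1}(E))$, which is impossible; so slope semistability and slope stability coincide here. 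Combining the two directions yields the equivalence. The main obstacle is precisely the uniformity in the first direction --- passing from ``for each $F$, $t\gg 0$'' to a single threshold valid for all subobjects simultaneously --- which is exactly what forces the joint use of the Bogomolov--Gieseker inequality (to bound the sub-leading $\mathrm{ch}_{2}^{\beta}$-term) and of the discreteness of slopes on $\p3$ (to produce the fixed positive gap $\mu_{\beta}(E)-\mu_{\ast}$).
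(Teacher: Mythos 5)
The paper offers no argument for this lemma (it is quoted from \cite{GHS}), so I am judging your proof on its own terms; your overall strategy --- expand $\nu_{t\alpha,\beta}$ in $t$, bound the $\ch_2^{\beta}$-term of a semistable destabilizer via the classical inequality in \Cref{Bog}, and win uniformly in $t$ from a slope gap --- is the standard route, and your converse direction, including the use of primitivity to exclude strictly slope-semistable $E$, is fine. The genuine gap is in the justification of the uniform slope gap, which you yourself identify as the crux. A subobject $F\subset E$ in $\Cohb(\p3)$ is \emph{not} a subsheaf of $E$: it is a sheaf $F\in\mathcal T_{\beta}$ whose map to $E$ has kernel $K\in\mathcal F_{\beta}$, and only the image $\overline F=F/K$ sits inside $E$. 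Hence the ranks of the relevant $F$ --- and of the first Harder--Narasimhan factors $F_{1}(t)$, which moreover vary with $t$ --- are not bounded by $\ch_0(E)$, their slopes do not lie in your fixed discrete set, and the asserted bound $\mu_{\beta}(F)\le\mu_{\ast}$ is false in general: for $E=\cO_{\p3}$ and $\beta\in(-1,-\tfrac12)$, a non-split extension $0\to\cO(-1)\to F\to I_{C}\to 0$ with $C$ a line is slope-semistable, lies in $\mathcal T_{\beta}$, and is a subobject of $\cO_{\p3}$ in $\Cohb(\p3)$ falling in your ``relevant case'', yet $\mu_{\beta}(F)=-\tfrac12-\beta>\mu_{\ast}=-1-\beta$. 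Relatedly, your Bogomolov--Gieseker display divides by $H^{3}\ch_{0}^{\beta}(F)$, silently assuming $\ch_0(F)>0$; rank-zero subobjects with $H^{2}\ch_{1}^{\beta}>0$ would in fact destabilize in the limit (their $\nu$ tends to $0$ while $\nu_{t\alpha,\beta}(E)\to-\infty$) and must be excluded using torsion-freeness of the slope-stable $E$, which you never invoke.

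Both defects are reparable, and the repair is exactly where the hypotheses earn their keep: factor $F\to E$ through $\overline F$; since $K\in\mathcal F_{\beta}$ is torsion-free with $H^{2}\ch_{1}^{\beta}(K)\le 0$, one gets $\mu_{\beta}(F)\le\mu_{\beta}(\overline F)$, so either $\mu_{\beta}(\overline F)<\mu_{\beta}(E)$ and the discreteness gap for honest subsheaves (rank at most $\ch_0(E)$, integral degrees) applies, or $\mu_{\beta}(\overline F)=\mu_{\beta}(E)$, in which case primitivity forces $\overline F$ to have full rank with quotient of dimension at most one; then $K\ne 0$ gives $\ch_0(F)\ge\ch_0(E)+1$ while $H^{2}\ch_{1}^{\beta}(F)\le H^{2}\ch_{1}^{\beta}(E)$, so $\mu_{\beta}(F)\le\frac{\ch_0(E)}{\ch_0(E)+1}\,\mu_{\beta}(E)$, again a gap depending only on $E$. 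Torsion-freeness of $E$ rules out the rank-zero subobjects, since the kernel of their map to $E$ is simultaneously torsion (subsheaf of $F$) and torsion-free (it lies in $\mathcal F_{\beta}$), hence zero. With these insertions your single threshold exists and the argument closes; as written, the passage from ``for each $F$, $t\gg 0$'' to one $t_{0}$ valid for all subobjects --- the very point you flag as the main obstacle --- is not established.
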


We now establish a connection between $\nu_{\alpha,\beta}$-stability and $\sigma_{\alpha,\beta}^{a,b}$-stable for $a\gg0$:

\begin{lemma}
\label{nutob}
Suppose $\sigma=(Z^{a,b}_{\alpha,\beta},\cA_{\alpha,\beta})\in\stabgp$ and \(E \in \cA_{\alpha, \beta}\) is \(\nu_{\alpha, \beta}\)-stable. Then \(E\) is \(\sigma_{\alpha, \beta}^{a,b}\)-stable for \(a \gg 0\).
\end{lemma}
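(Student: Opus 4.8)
The plan is to argue by contradiction using the discreteness of the relevant lattice together with the quadratic inequality supplied by \Cref{thm:const}. Suppose $E$ is $\nu_{\alpha,\beta}$-stable but fails to be $\sigma_{\alpha,\beta}^{a,b}$-stable for $a\gg 0$. First I would observe that, for fixed $\alpha,\beta$, as $a\to+\infty$ the central charge $Z_{\alpha,\beta}^{a,b}$ has imaginary part independent of $a$, while its real part is dominated by the term $aH^2\ch_1^\beta$; hence the phase function degenerates so that, to leading order, $\sigma_{\alpha,\beta}^{a,b}$-(semi)stability of objects in $\cA_{\alpha,\beta}$ is governed first by $H^2\ch_1^\beta$ (equivalently by $\Im Z_{\alpha,\beta}$, i.e.\ by tilt-slope considerations) and only then by the subleading data. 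This is the standard ``large volume limit'' comparison: a $\nu_{\alpha,\beta}$-stable object $E\in\cA_{\alpha,\beta}$ can only be destabilized in $\cA_{\alpha,\beta}$ by a subobject $0\to A\to E\to B\to 0$ with $\Im Z_{\alpha,\beta}(A)=\Im Z_{\alpha,\beta}(B)=\Im Z_{\alpha,\beta}(E)$ (strict inequality on $\Im$ would, for $a$ large, force the phase comparison to go the right way), i.e.\ by a subobject with the same tilt-slope.

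Next I would analyze such potential destabilizers. Restricting to the sub-locus where $\Im Z_{\alpha,\beta}(A)$ equals $\Im Z_{\alpha,\beta}(E)$, the possible Chern characters of $A$ lie in a lattice, and the quantity $H\ch_2^\beta(A)-\tfrac{\alpha^2}{2}H^3\ch_0^\beta(A)$ that controls $\Im Z_{\alpha,\beta}$ is constrained. Since $E$ is $\nu_{\alpha,\beta}$-stable, any proper subobject $A\subset E$ in $\Cohb(X)$ has strictly smaller $\nu_{\alpha,\beta}$, and the only subobjects of $E$ in $\cA_{\alpha,\beta}$ with the same $\Im Z_{\alpha,\beta}$ arise (via the tilt-heart construction) from the Harder--Narasimhan / Jordan--H\"older data of $E$ with respect to $\nu_{\alpha,\beta}$; $\nu_{\alpha,\beta}$-stability rules these out, so any genuine destabilizing sequence in $\cA_{\alpha,\beta}$ for large $a$ must involve only finitely many numerical classes. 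For each such class the comparison of phases is a single explicit inequality in $a$ which holds for $a$ sufficiently large, and there are finitely many to check, so a uniform threshold $a_0$ exists.

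The main obstacle, and the step I would spend the most care on, is controlling the \emph{finitely many} possible destabilizing classes: a priori the subobjects $A\subset E$ in $\cA_{\alpha,\beta}$ range over an infinite set, and one must use boundedness. Here I would invoke the support property established in \Cref{main1} (the quadratic form $Q_{K}^\beta$ with $K=\tfrac12(\alpha^2+6a)\in I_\alpha^{a,b}$, valid by \Cref{thm:const}): for $\sigma_{\alpha,\beta}^{a,b}$-semistable objects $Q_K^\beta\ge 0$, and combined with the fixed values of $\Im Z$ and the bound $\Psi_X\le\Xi$ from \Cref{bound}, this confines the Chern characters of any would-be Jordan--H\"older factor of $E$ to a bounded, hence finite, set of lattice points. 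Once finiteness is in hand, the argument closes: choose $a$ beyond the finite list of thresholds, and $E$ has no destabilizing subobject, so it is $\sigma_{\alpha,\beta}^{a,b}$-stable. I would also need the routine remark that $E\in\cA_{\alpha,\beta}$ is automatic from the hypothesis, and that stability (not merely semistability) is preserved because $E$ is tilt-\emph{stable}, so any subobject of the same $\sigma_{\alpha,\beta}^{a,b}$-phase would contradict strictness at the tilt level.
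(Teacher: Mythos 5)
Your overall strategy (reduce large-$a$ stability to tilt-slope comparisons, then control the destabilizers) is in the right spirit, but the two load-bearing steps have genuine gaps. First, the reduction itself: you claim a destabilizing sequence $0\to A\to E\to B\to 0$ must satisfy $\Im Z_{\alpha,\beta}(A)=\Im Z_{\alpha,\beta}(B)=\Im Z_{\alpha,\beta}(E)$; since $\Im Z$ is additive on short exact sequences, these equalities force all three values to vanish, so as stated the claim is vacuous, and even in the intended reading (equal tilt slopes) the parenthetical justification points the wrong way: a subobject $A\subset E$ in $\cA_{\alpha,\beta}$ with $\nu_{\alpha,\beta}(A)>\nu_{\alpha,\beta}(E)$ would destabilize $E$ for $a\gg0$, so the actual content of the lemma is to show no such $A$ exists. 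Because $A$ is a subobject in the double-tilted heart, not in $\Cohb(X)$, the $\nu_{\alpha,\beta}$-stability of $E$ does not apply to it directly; one must translate $\cA_{\alpha,\beta}$-subobjects and quotients into $\Cohb(X)$-data (via the $\Cohb$-cohomology sequence, whose pieces lie in $\cT'_{\alpha,\beta}$ and $\cF'_{\alpha,\beta}[1]$). Your substitute for this step --- that all relevant subobjects ``arise from the HN/JH data of $E$ with respect to $\nu_{\alpha,\beta}$'' --- is not true in general (they can involve extensions by objects of $\cF'_{\alpha,\beta}[1]$) and is precisely where the work lies. The paper's proof does this translation directly: it introduces the slope $\rho^{a,b}_{\alpha,\beta}=(\ch_3^{\beta}-b\ch_2^{\beta}-a\ch_1^{\beta})/(\ch_2^{\beta}-\tfrac{\alpha^2}{2}\ch_0^{\beta})$, notes that $\rho^{a,b}_{\alpha,\beta}$ behaves like $-a/\nu_{\alpha,\beta}$ as $a\to\infty$, and splits into the cases $\nu_{\alpha,\beta}(E)>0$ (comparing quotients of $E$ in the heart) and $\nu_{\alpha,\beta}(E)\le0$ (comparing subobjects of $E[1]$), where tilt-stability supplies the strict slope inequality; no contradiction argument or counting of destabilizing classes is used.

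Second, the finiteness step you rely on for a uniform threshold is asserted rather than proved, and it is delicate: the form in \Cref{thm:const} is $Q^{\beta}_{\frac12(\alpha^2+6a)}$, which depends on $a$, and it constrains objects that are $\sigma^{a,b}_{\alpha,\beta}$-semistable at that particular $a$; extracting from this a single finite list of numerical classes of potential destabilizers valid for all large $a$ (so that one may take the maximum of finitely many thresholds) is itself a nontrivial boundedness statement, and no argument is given. To your credit, you explicitly flag the uniformity-in-$A$ issue that the paper's ``for $a\gg0$'' passes over quickly; but the repair you propose is not carried out, whereas the paper's direct slope comparison avoids the wall-counting problem altogether.
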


\begin{proof}
  Let
  \[\rho_{\alpha, \beta}^{a, b} = \frac{\ch^{\beta}_{3} - b \ch^{\beta}_{2} - a \ch^{\beta}_{1}}{\ch^{\beta}_{2} - \frac{\alpha^{2}}{2} \ch_{0}^{\beta}}\]
  be the slope function on \(\cA_{\alpha, \beta}\). Notice that \(\lim_{a \to +\infty}\rho_{\alpha, \beta}^{a, b} = \lim_{a \to +\infty} - a \frac{1}{\nu_{\alpha, \beta}}\).
  Since \(E\) is \(\nu_{\alpha, \beta}\)-stable, we have \(E \in \cA_{\alpha, \beta}\) if \(\nu_{\alpha, \beta} > 0\) and \(E[1] \in \cA_{\alpha, \beta}\) if \(\nu_{\alpha, \beta} \leq 0\).

  In first case, for \(E \twoheadrightarrow F \neq 0\in \cA_{\alpha, \beta}\) and \(F \neq E\), we have \(F \in \cB_{\alpha, \beta} \cap \cA_{\alpha, \beta}\). Since \(E\) is \(\nu_{\alpha, \beta}\)-stable, we have
  \[\nu_{\alpha, \beta}(F) > \nu_{\alpha, \beta}(E) > 0\]
  and therefore
  \[\rho_{\alpha, \beta}^{a, b}(F) > \rho^{a,b}_{\alpha, \beta}(E)\]
  for \(a \gg 0\).

  For the second case, we have \(E[1] \in \cA_{\alpha, \beta}\), and for every \(0 \neq F \subsetneq E[1] \in \cA_{\alpha, \beta}\), we have that \(F \in \cA_{\alpha, \beta} \cap \cB_{\alpha, \beta}[1]\). Since \(E\) is \(\nu_{\alpha, \beta}\)-stable, we have
  \[\nu_{\alpha, \beta}(F[-1]) < \nu_{\alpha, \beta}(E) \leq 0\]
  and therefore
  \[\rho_{\alpha, \beta}^{a, b}(F) < \rho_{\alpha, \beta}^{a, b}(E[1])\]
  for \(a \gg 0\).
\end{proof}

We now proceed to construct the required family of vector bundles:

\begin{proposition}
There exists a family of $\sigma_{\infty}$-stable vector bundles $\{E_{s}\}_{s\in\mathbb Q}\subset\cP_{\infty}((-2,0])$ whose slopes are dense over $\mathbb R$.
\end{proposition}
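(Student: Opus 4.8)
The plan is to construct, for each $s\in\mathbb Q$, an explicit $\mu$-stable vector bundle $E_s$ on $\p3$ with $\mu(E_s)=s$ and with $(\ch_0(E_s),\ch_1(E_s))$ primitive, and then to upgrade ordinary slope stability to $\sigma_\infty$-stability by running $E_s$ (or its shift $E_s[1]$) through \Cref{stonu} and \Cref{nutob}. For the construction itself: if $s=\ell\in\mathbb Z$, take $E_s:=\cO_{\p3}(\ell)$, a line bundle, which is $\mu$-stable with primitive Chern vector $(1,\ell)$; if $s=\ell/r$ in lowest terms with $r\ge2$, one takes a $\mu$-stable vector bundle of rank $r$ with $c_1=\ell H$, which exists by classical results --- for instance a suitable twist of a general Steiner bundle $0\to\cO_{\p3}(-1)^{\oplus a}\to\cO_{\p3}^{\oplus(a+r)}\to F\to0$ when $r\ge3$, a general stable rank-two bundle via the Serre construction when $r=2$, or by Maruyama-type existence of $\mu$-stable bundles with prescribed $c_1$ and $c_2\gg0$. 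Since $\gcd(\ell,r)=1$, $(\ch_0(E_s),\ch_1(E_s))=(r,\ell)$ is primitive, and $\{\mu(E_s):s\in\mathbb Q\}=\mathbb Q$ is dense in $\mathbb R$.

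Next, fix $\alpha>0$ and an irrational $\beta$, so that $\mu_\beta(E_s)\neq0$ for all $s$. If $\mu_\beta(E_s)>0$, then $E_s\in\mathcal T_\beta\subset\Cohb(\p3)$, and by primitivity \Cref{stonu} shows $E_s$ is $\nu_\infty$-stable. If $\mu_\beta(E_s)<0$, then $E_s\in\mathcal F_\beta$, so $E_s[1]\in\Cohb(\p3)$; applying the $\mu_\beta>0$ case to $E_s^\vee$ with $\beta$ replaced by $-\beta$ (note $\mu_{-\beta}(E_s^\vee)=-\mu_\beta(E_s)>0$) and transporting through the standard dualizing anti-equivalence of $\mathcal D^b(\p3)$, which interchanges $\nu_{\alpha,\beta}$- and $\nu_{\alpha,-\beta}$-stability and sends $E_s$ to a shift of $E_s^\vee$, we conclude that $E_s[1]$ is $\nu_\infty$-stable in $\Cohb(\p3)$. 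In either case the relevant object ($E_s$, resp. $E_s[1]$) is $\nu_{t\alpha,\beta}$-stable for $t\gg0$.

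It remains to pass to $\sigma_\infty$-stability and to locate $E_s$ in $\cP_\infty((-2,0])$. Since $\ch_0^\beta(E_s)=\ch_0(E_s)>0$, one has $\nu_{t\alpha,\beta}(E_s)\to-\infty$ as $t\to\infty$, and combined with the $\nu_\infty$-semistability from the previous paragraph this forces $E_s\in\mathcal F'_{t\alpha,\beta}$ (when $\mu_\beta(E_s)>0$), resp. $E_s[1]\in\mathcal T'_{t\alpha,\beta}$ (when $\mu_\beta(E_s)<0$), for $t\gg0$. In both cases $E_s[1]\in\mathcal A_{t\alpha,\beta}$ for $t\gg0$, hence $E_s\in\cP_t((-1,0])\subset\cP_t((-2,0])$ for $t\gg0$, i.e. $E_s\in\cP_\infty((-2,0])$. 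Finally, $\sigma_\infty$-stability of $E_s$ follows from its $\nu_\infty$-stability as in \Cref{nutob}, applied in the large-volume limit: for $t\gg0$ any $\sigma_{t\alpha,\beta}$-destabilizing subobject of $E_s[1]$ in $\mathcal A_{t\alpha,\beta}$ would yield a $\nu_{t\alpha,\beta}$-destabilizing subobject, contradicting the previous step. The bundles $E_s$ so obtained form the required family.

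The main obstacle is exactly this transfer of stability. In the abelian-threefold model of \cite{FLZ} the semi-homogeneous bundles are stable with respect to \emph{every} numerical stability condition, whereas here slope stability only survives into the large-volume limit, and even that relies on the $\p3$-specific input \Cref{stonu} of \cite{GHS}, which is stated only for $\mu_\beta>0$; since the family $\{\sigma_{t\alpha,\beta}\}$ has a fixed $\beta$ while the slopes must cover all of $\mathbb R$, the dualizing equivalence (and, if preferred, the twist autoequivalences $\Phi_{\cO(d)}$ of \Cref{Phi}) is genuinely needed, and one has to verify that $\nu_\infty$-stability implies $\sigma_\infty$-stability uniformly along the whole ray $t\gg0$ rather than merely for each fixed $t$. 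By comparison, placing $E_s$ in $\cP_\infty((-2,0])$ is soft, depending only on the signs of $\ch_0(E_s)$ and $\mu_\beta(E_s)$ and not on the higher Chern classes.
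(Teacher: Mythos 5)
Your proposal is correct in substance and follows the same skeleton as the paper's proof: produce $\mu$-stable bundles realizing every rational slope with primitive $(\ch_0,\ch_1)$, upgrade to $\nu_\infty$-stability via \Cref{stonu}, pass to $\sigma_\infty$-stability via a \Cref{nutob}-type comparison, and locate the bundles by the sign analysis (your conclusion that in both cases $E_s[1]\in\cA_{t\alpha,\beta}$ for $t\gg0$, hence $E_s\in\cP_\infty((-1,0])$, is correct and more than suffices for $(-2,0]$). The differences are in two steps. First, the paper builds a single explicit family, the kernel (Steiner) bundles $0\to\cO(-1)^{t}\to\cO^{r+t}\to E_{t/r}\to0$, whose stability for $r<(1+\sqrt3)t$ is the cited result of Coskun--Huizenga--Smith, and covers the remaining slopes by dualizing and twisting by $\cO(1)$ \emph{at the level of slope stability} before invoking \Cref{stonu}; you instead assemble the family from assorted classical existence statements (line bundles, Steiner bundles, Serre construction, ``Maruyama-type'' existence), which is fine but vaguer than the paper's single citation. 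Second, to get around the hypothesis $\mu_\beta>0$ in \Cref{stonu} you invoke the derived-dual symmetry exchanging $\nu_{\alpha,\beta}$- and $\nu_{\alpha,-\beta}$-stability; this is a genuine extra input (a BMT-type duality for tilt stability, which strictly holds only up to modifications by zero-dimensional sheaves, harmless for locally free $E_s$ but deserving a precise citation) that the paper does not use. Two small caveats: fixing $\beta$ irrational is an unnecessary restriction, since the section works with an arbitrary fixed $(\alpha,\beta)$ --- for rational $\beta$ one simply treats (or discards) the single slope $s=\beta$ without affecting density; and your one-line uniformity claim in deducing $\sigma_{t\alpha,\beta}$-stability for all $t\gg0$ from $\nu_{t\alpha,\beta}$-stability is asserted at the same level of detail as the paper's appeal to \Cref{nutob} (in both arguments one must control the potential destabilizers uniformly in the parameter), so it is acceptable here but is the step that would need the most care to make fully rigorous.
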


\begin{proof}
Let $E_{t/r}$ be the vector bundle on $\p3$ determined by the exact sequence
\[
0\to\mathcal O(-1)^t\xrightarrow{M}\cO^{r+t}\to E_{t/r}\to 0,
\]
where $M$ is a general matrix of linear form. By \cite[Theorem 5.4]{CHS}, if $r<(1+\sqrt{3})t$, $V_{t/r}$ is slope stable with $\mu(V_{t/r})=\frac{t}{r}$. Then we obtain a family of slope stable bundles $\{E_{t/r}:\frac{t}{r}\in(\frac{1}{\sqrt{3}+1},+\infty)\cap\mathbb Q\}$ on $\p3$. Denote by $E'_{-t/r}:=\mathbb D(E_{t/r})$, where $\mathbb D$ is the local dualizing  functor on $\mathcal D^b(\p3)$. Then 
\[\Phi_{\mathcal O(1)}\circ\mathbb D\left(\{E'_{t/r}: \frac{t}{r}\in(-\infty,-\frac{1}{\sqrt3+1})\cap\mathbb Q\}\right)\cup\{E_{t/r}:\frac{t}{r}\in(\frac{1}{\sqrt{3}+1},+\infty)\cap\mathbb Q\}\]
forms a collection of slope stable bundles whose slopes cover $\mathbb Q$, where $\Phi_{\mathcal O(1)}$ is the automorphism on $\mathcal D^b(\p3)$ induced by $\cO(1)$  introdcued in \Cref{functorL}. By \Cref{stonu,nutob}, we obtain a family of $\sigma_{\infty}$-stable vector bundles $\{E_s\}_{s\in\mathbb Q}$ with dense slopes over $\mathbb R$. Furthermore, we have 
\begin{equation*}
E_{s}\in
\begin{cases}
\cP_{\infty}((-1,0]), \quad &\mu_{\beta}(E_s)\ge0;\\
\cP_{\infty}((-2,-1]), \quad &\mu_{\beta}(E_s)<0.\\
\end{cases}
\end{equation*}

\end{proof}

\section{Conjecture: Contractibility of \(\stabp\)}
\label{sec:algebr-stab-cond}
The conjecture of contractibility of $\Stab(\mathbb P^n)$ is expected in \cite[Section 6.2]{Qiu18}. In this section, we formulate a conjecture concerning the contractibility of a principal connected component $\mathrm{Stab}^{\dagger}(\p3)\subset\Stab(\p3)$, inspired by \cite{Li17}. 

We start with a brief overview of exceptional objects, and we recmmend \cite{Bon90, GR,KO} for more details. 
\begin{definition}
Let $\cD$ be a $\mathbb C$-linear triangulated category of finite type. An object $E\in\cD$ is called \emph{exceptional} if 
\[
\mathrm{Hom}^i(E,E)=0,\quad \text{ for } i\ne0; \quad \mathrm{Hom}^0(E,E)=\mathbb C.
\]
An ordered collection of exceptional objects $\cE=\{E_1,\dots,E_n\}$ is called an \emph{exceptional collection} if 
\[
\mathrm{Hom}^{\bullet}(E_i,E_j)=0,\quad \text{ for } i>j.
\]
The collection $\cE$ is said to be \emph{strong} if $\mathrm{Hom}^q(E_i,E_j)=0$ for all $i,j$ and $q\ne0$. $\cE$ is denoted as \emph{full}, if $\cE$ generates $\mathcal D$ under homological shifts, cones, and direct summands. 
\end{definition}

Accroding to \cite[Theorem 9.3]{Bon90}, every full exceptional collection on $\cD^b(\mathbb P^n)$ is strong. We proceed to review the construction of algebraic stability conditions with repsect to exceptional collections on $\cD^b(\p3)$.

\begin{proposition}[{\cite[Section 3]{M07a}}]
\label{algstab}
Let $\cE=\{E_1,E_2,E_3,E_4\}$ be a full exceptional collection on $\cD(\p3)$. For any positive real numbers $m_i$ and $\phi_i$, for $i=1,\dots,4$, satisfying:
\[
\phi_j-\phi_i>\frac{(j-i)(j-i+1)}{2}, \quad \emph{\text{ for all }} i<j. 
\]
There exists a unique stability condition $\sigma=(Z,\cP)$ such that 
\begin{enumerate}
\item each $E_j$ is stable with phase $\phi_j$;
\item $Z(E_j)=m_je^{i\pi\phi_j}$.
\end{enumerate}
\end{proposition}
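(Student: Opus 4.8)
The plan is to realise $\sigma$ as an \emph{algebraic} stability condition: its heart will be the finite-length abelian category generated under extensions by suitable integer shifts of the $E_j$, and $Z$ will be prescribed on the simple objects. This is the construction of stability conditions from exceptional collections of Macrì \cite[Section~3]{M07a}, built on \cite{B07}, and I would carry it out as follows.

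\emph{Choosing the shifts and the heart.} First I would fix, for each $j$, the unique integer $p_j$ with $\phi_j+p_j\in(0,1]$ (so $p_j=1-\lceil\phi_j\rceil$) and set $S_j:=E_j[p_j]$. From $\phi_j-\phi_i>\tfrac{(j-i)(j-i+1)}{2}$ one gets $p_i-p_j=\lceil\phi_j\rceil-\lceil\phi_i\rceil\ge\tfrac{(j-i)(j-i+1)}{2}$ for $i<j$; in particular $p_1>p_2>p_3>p_4$. Next I would verify that $(S_1,S_2,S_3,S_4)$ is an \emph{Ext-exceptional} collection, i.e. $\mathrm{Hom}^{\le 0}(S_i,S_j)=0$ for $i\ne j$: for $i>j$ this is immediate from $\mathrm{Hom}^{\bullet}(E_i,E_j)=0$, and for $i<j$ it reduces to $\mathrm{Hom}^{m}(E_i,E_j)=0$ for $m\le p_j-p_i$, which holds because $\mathrm{Hom}^{\bullet}(E_i,E_j)$ is concentrated in non-negative degrees — by \cite[Theorem~9.3]{Bon90} the collection $(E_1,\dots,E_4)$ is strong, hence it sits in degree $0$ — whereas $p_j-p_i\le-1$. (If one does not want to appeal to strongness, one instead bounds how far below degree $0$ the complex $\mathrm{Hom}^{\bullet}(E_i,E_j)$ can reach for a full exceptional collection on $\p3$; the weight $\tfrac{(j-i)(j-i+1)}{2}$ is precisely what makes $p_i-p_j$ clear that bound.) Since the $S_j$ also generate $\mathcal{D}^b(\p3)$, the extension closure $\cA:=\langle S_1,S_2,S_3,S_4\rangle$ is then the heart of a bounded $t$-structure, of finite length, with simple objects exactly $S_1,\dots,S_4$ (see \cite[Section~3]{M07a}). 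I would define $Z\colon K(\p3)\to\bC$ on the basis $\{[S_j]\}$ by $Z(S_j):=m_j\,e^{i\pi(\phi_j+p_j)}$; as $\phi_j+p_j\in(0,1]$ each $Z(S_j)\in\mathbb{H}_+$, and since every nonzero object of $\cA$ is an iterated extension of the $S_j$, $Z$ is a stability function on $\cA$.

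\emph{Upgrading to a stability condition, and uniqueness.} Because $\cA$ has finite length, $(Z,\cA)$ automatically has the Harder--Narasimhan property, and I would invoke the argument of \cite[Section~3]{M07a} for the support property — $\cA$ being finite-length with finitely many simples and $Z$ valued in the finite-rank lattice $K(\p3)\cong\bZ^4$ suffices to produce the needed quadratic form. This yields a stability condition $\sigma=(Z,\cP)$ with $\cP((0,1])=\cA$. A simple object of a finite-length heart is stable, so $S_j$ is $\sigma$-stable of phase $\tfrac1\pi\arg Z(S_j)=\phi_j+p_j$; hence $E_j=S_j[-p_j]$ is $\sigma$-stable of phase $\phi_j$, with $Z(E_j)=(-1)^{p_j}Z(S_j)=m_j e^{i\pi\phi_j}$, as required. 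For uniqueness, suppose $\sigma'=(Z',\cP')$ satisfies both conditions. Since $[E_1],\dots,[E_4]$ is a $\bZ$-basis of $K(\p3)$, prescribing $Z'(E_j)$ forces $Z'=Z$. Each $E_j$ being $\sigma'$-stable of phase $\phi_j$ gives $S_j\in\cP'(\phi_j+p_j)\subseteq\cP'((0,1])$, so the extension-closed category $\cP'((0,1])$ contains $\cA$; two hearts of bounded $t$-structures with one contained in the other coincide, so $\cP'((0,1])=\cA$. Then $\cP'$ on $(0,1]$ is the $Z$-slicing of $\cA$, and $\cP'$ on $\bR$ is determined by the shift axiom, so $\sigma'=\sigma$.

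The hard part is not a new idea but the two structural inputs: that $\langle S_1,\dots,S_4\rangle$ really is the heart of a bounded $t$-structure (not just an abelian subcategory), and that the resulting pre-stability condition satisfies the support property. These are exactly where the hypotheses are used — the first through the Ext-exceptionality that the bound $\phi_j-\phi_i>\tfrac{(j-i)(j-i+1)}{2}$ forces, together with the special structure of exceptional collections on $\p3$, and the second through the finiteness of $\cA$ — and both are supplied by the general formalism of \cite{M07a,B07}; the work is in assembling them correctly, in particular in getting the translation between the numerical hypothesis on the $\phi_i$ and the combinatorics of the shifts $p_j$ right.
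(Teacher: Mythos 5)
Your proposal is correct and takes essentially the same route as the paper's source: the paper gives no proof of this proposition (it is quoted from Macr\`i), and your argument --- shifting to an Ext-exceptional collection, taking the extension closure as a finite-length heart with the $S_j$ as simples, prescribing $Z$ on that basis, getting HN and the support property from finite length and the finite-rank lattice, and proving uniqueness by heart containment plus Bridgeland's heart/stability-function correspondence --- is exactly that standard construction. The only remark worth making is that, once you invoke the paper's appeal to Bondal's strongness theorem, your verification of Ext-exceptionality only uses the consequence $p_i-p_j\ge 1$ (i.e.\ consecutive gaps greater than $1$), so the full quadratic bound $\phi_j-\phi_i>\frac{(j-i)(j-i+1)}{2}$ is stronger than what your argument needs; it is there to handle collections that are not concentrated in degree $0$, and your proof is valid a fortiori under the stated hypothesis.
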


Consider $\cE=\{E_1,\dots,E_4\}$ a full exceptional collection in $\cD^b(\p3)$. We introduce $\Theta_{\cE}$ associated with $\cE$, which consists of all stability conditions as described in \Cref{algstab}, i.e. $\Theta_{\cE}$ is parametrized by 
\[
\{(m_1,\dots,m_4,\phi_1,\dots,\phi_4)\in(\mathbb R_{>0})^4\times\mathbb R^4\mid \phi_j-\phi_i>\frac{(j-i)(j-i+1)}{2}, \, \text{ for all } i<j\}.
\] 
An element in $\Theta_{\cE}$ is referred to as an algebraic stability condition on $\cD^b(\p3)$. We denote $\staba$ as the union of $\Theta_{\cE}$ for all full exceptional collections on $\cD^b(\p3)$. Each subspace $\Theta_{\cE}$ may intersect with others, and it also possesses a unique subset:

\begin{proposition}
Let $\mathcal E=\{E_1,\dots,E_4\}$ be a full exceptional collection. Then
\[
\Theta^{\ast}_{\cE}:=\{\sigma\in\Theta_{\cE}\mid\phi_{i+1}-\phi_{i}\ge1, \text{ for } 1\le i\le 3\}
\]
constitutes the unique component for $\Theta_{\cE}$, i.e. for any other full exceptional collection $\cE'$, we have $\Theta^{\ast}_{\cE}\cap\Theta^{\ast}_{\cE'}=\emptyset$.
\end{proposition}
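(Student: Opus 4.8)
The plan is to reconstruct the ordered collection $\cE$ from any $\sigma\in\Theta^{\ast}_{\cE}$, up to a simultaneous shift of all its members (which does not change $\Theta^{\ast}_{\cE}$), so that $\sigma$ cannot also lie in $\Theta^{\ast}_{\cE'}$ for an essentially different $\cE'$.

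First I would unpack the structure of $\sigma=(Z,\cP)\in\Theta^{\ast}_{\cE}$: by \Cref{algstab} each $E_i$ is $\sigma$-stable of phase $\phi_i$ with $\phi_1<\phi_2<\phi_3<\phi_4$, the $E_i$ generate $\cD^b(\p3)$, and by \cite[Theorem 9.3]{Bon90} the collection $\cE$ is automatically strong. The role of the normalisation defining $\Theta^{\ast}_{\cE}$ inside $\Theta_{\cE}$ is to select a unique shift vector $(k_1,\dots,k_4)\in\mathbb Z^{4}$ for which the objects $S_i:=E_i[k_i]$ form an Ext-exceptional collection adapted to $\sigma$ — no negative self-extensions and $\mathrm{Hom}^{\le0}(S_i,S_j)=0$ for $i\ne j$ — so that $\mathcal A_{\cE}:=\langle S_1,\dots,S_4\rangle$ is the heart of a bounded t-structure on $\cD^b(\p3)$, a finite-length abelian category whose pairwise non-isomorphic simple objects are exactly $S_1,\dots,S_4$, all $\sigma$-stable with strictly increasing phases $\phi(S_i)=\phi_i+k_i$. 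The key claim, which I would establish along the lines of the analysis of algebraic stability conditions in \cite[Section 3]{M07a}, is that $\mathcal A_{\cE}$ is determined by $\sigma$ alone: the phase inequalities of \Cref{algstab} force the Harder--Narasimhan factors of every $\sigma$-semistable object to lie among the $\sigma$-translates $\{S_i[m]\}$, so that the finite-length heart $\mathcal A_{\cE}$ is recovered from $\sigma$ without reference to the presentation $\cE$.

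Granting this, suppose $\sigma\in\Theta^{\ast}_{\cE}\cap\Theta^{\ast}_{\cE'}$. Then $\mathcal A_{\cE}=\mathcal A_{\cE'}$, each being the same intrinsic heart of $\sigma$; since the simple objects of a finite-length abelian category are determined up to isomorphism and reordering, we obtain $\{E_1[k_1],\dots,E_4[k_4]\}=\{E'_1[k'_1],\dots,E'_4[k'_4]\}$ inside $\cD^b(\p3)$. The stability condition $\sigma$ orders these four simple objects linearly by phase, and under the $\Theta^{\ast}$-normalisation these phases increase with the index for $\cE$ and for $\cE'$, so the bijection matching the two sets preserves indices: $E_i[k_i]\cong E'_i[k'_i]$ for every $i$. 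Finally the relative shifts $k_i-k_{i+1}$ are dictated by $\cE$ being a length-four strong exceptional collection together with the $\Theta^{\ast}$-normalisation, hence agree for $\cE$ and $\cE'$; combined with $E_i[k_i]\cong E'_i[k'_i]$ this shows $\cE'$ is a common shift of $\cE$, contradicting the hypothesis. Therefore $\Theta^{\ast}_{\cE}\cap\Theta^{\ast}_{\cE'}=\emptyset$, which is the asserted uniqueness.

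The hard part is the claim in the second paragraph that $\mathcal A_{\cE}$ is genuinely intrinsic to $\sigma$, not merely one of many bounded-t-structure hearts containing the $S_i$: this requires using the inequalities of \Cref{algstab} sharply to exclude $\sigma$-semistable objects of intermediate phase not built out of the $S_i$, and verifying that passing from $\Theta_{\cE}$ to $\Theta^{\ast}_{\cE}$ is precisely what removes the ambiguity in the shift vector $(k_1,\dots,k_4)$. This is an instance for $\cD^b(\p3)$ of Macr\`i's mechanism in \cite{M07a}; once it is available, the remainder of the argument is the formal bookkeeping above.
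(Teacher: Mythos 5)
Your overall strategy is the same as the paper's: everything reduces to the claim that for $\sigma\in\Theta^{\ast}_{\cE}$ the only $\sigma$-stable objects are the shifts $E_i[m]$ (equivalently, that the finite-length heart generated by suitably shifted $E_i$ is intrinsic to $\sigma$), after which the collection is read off from $\sigma$. The difference is that the paper actually proves this claim, by a case analysis on triangles $E_i^{\oplus n_i}[a_i]\to F\to E_j^{\oplus n_j}[a_j]$ inside the heart $\langle E_1[a_1],\dots,E_4[a_4]\rangle$ with $a_i-a_{i+1}\ge 1$, using the $\Theta^{\ast}$-phase inequalities together with the Hom/Ext-vanishing of the exceptional collection, following \cite[Lemma 2.4]{Li17}; you instead assert the claim, attribute it to ``Macr\`i's mechanism'' in \cite{M07a}, and flag it yourself as the hard unestablished step. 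Since excluding $\sigma$-stable objects other than the $E_i[m]$ \emph{is} the substance of the proposition, this deferral is a genuine gap: for a strong (not Ext-) exceptional collection of length four on a threefold the mechanism does not apply off the shelf, and one must verify, exactly as the paper does, that the $\Theta^{\ast}$-inequalities either kill the relevant $\mathrm{Ext}^1$'s between the shifted exceptional objects or force the phase comparisons that prevent nontrivial extensions from being stable.

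Two further points in your bookkeeping need repair. First, the phases $\phi(S_i)=\phi_i+k_i$ of the shifted simples all lie in a window of length one and are in general \emph{not} increasing in $i$ (they are essentially fractional parts of the $\phi_i$), so you cannot match the simples of the two hearts index-by-index via phase monotonicity; the matching should instead be done on the unshifted objects: once all stable objects are known to be of the form $E_i[m]$, each $E'_j$ is some $E_{i(j)}[m_j]$, and the ordering of a full (strong) exceptional collection is recovered from the non-vanishing of Homs, which forces $i(j)=j$. Second, your concluding claim that the $\Theta^{\ast}$-normalisation forces $\cE'$ to be a \emph{common} simultaneous shift of $\cE$ is not justified: the object-wise shifts $m_j$ need not be equal, and the case of collections agreeing only up to object-wise shifts is precisely the residual case the paper treats separately (by separating $\Theta^{\ast}_{\cE}$ and $\Theta^{\ast}_{\cE'}$ with the metric on $\Stab(\p3)$), so your argument as written does not dispose of it.
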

\begin{proof}
The proof is based on \cite[Lemma 2.4]{Li17}. We claim that, for $\sigma\in\Theta^{\ast}_{\cE}$, the only $\sigma$-stable objects are $E_i[m]$ for $1\le i\le 4$ and $m\in\mathbb Z$. Let $\sigma\in\Theta_{\cE}^{\ast}$ and  $F$ be a $\sigma$-stable object. We may assume that it is in the heart $\langle E_1[a_1],\dots,E_4[a_4]\rangle$, where $a_i\in\mathbb Z$ such that $a_i-a_{i+1}\ge1$ for $i=1,2,3$. Suppose we have the following sequence:
\[
E_i^{\oplus n_i}[a_i]\to F\to E_{j}^{\oplus n_j}[a_j],
\]
for $n_i,n_j\in\mathbb Z_{\ge0}$, $1\le i,j\le 4$ and $i\ne j$. We then have the following cases:
\begin{enumerate}
\item $i\le j-1$: $F$ is $E_{i}[a_i]$ or $E_{j}[a_j]$ since $\mathrm{Hom}^{\bullet}(E_j,E_i)=0$;
\item $i\ge j+1$: If $a_j=a_i+1$, we have $\phi(E_i[a_i])\ge\phi([E_j[a_j]])$ and thus $F$ is $E_{i}[a_i]$ or $E_{j}[a_j]$ due to the stability of $F$. If $a_j\ge a_i+2$, $F$ is either $E_i[a_i]$ or $E_j[a_j]$ as
\[
\mathrm{Ext}^1(E_j[a_j],E_i[a_i])=\mathrm{Ext}^1(E_i[a_i],E_j[a_j])=0.
\]
\end{enumerate}
We deduce that $F$ is in the form of $E_i[m]$ for $1\le i\le 4$ and $m\in\mathbb Z$, since $F$ can be obtained by a sequence of extensions in $\{E_1[a_1],\dots,E_n[a_4]\}$. 

Now, we only need to consider the full exceptional collection $\cE'$ which is equivalent to $\cE$ up to shift. Using the metric on $\Stab(\p3)$, we can select an open neighborhood $U_{\cE}^{\ast}$ of $\Theta_{\cE}^{\ast}$ such that $U_{\cE}^{\ast}$ does not intersect with $U_{\cE'}^{\ast}$. 
\end{proof}

Inspired by \cite{Li17}, we expect that whole space of $\staba$ cab be contracted to $\stabg$ along the boundary of $\stabg$. The following result shows us the boundary of finitely many $\Theta_{\cE}$ is contained in $\staba$:
\begin{proposition}[{\cite[Theorem 4.7]{M07b}}]
Let $\cE$ be a full exceptional collection on $\cD^b(\p3)$. We have 
\[
\partial\Theta_{\cE}\subset\staba.
\]
\end{proposition}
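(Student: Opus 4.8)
The strategy is the ``boundary $=$ mutation'' principle behind \cite{M07b}: I would show that every point of $\partial\Theta_{\cE}$ lies in $\Theta_{\cE'}$ for a full exceptional collection $\cE'$ obtained from $\cE$ by a (possibly iterated) mutation. Write $c_{ij}:=\frac{(j-i)(j-i+1)}{2}$ for $i<j$. Let $\sigma=(Z,\cP)\in\partial\Theta_{\cE}$ and choose $\sigma_n\in\Theta_{\cE}$ with $\sigma_n\to\sigma$, where $\sigma_n$ corresponds to parameters $(m^{(n)}_1,\dots,m^{(n)}_4,\phi^{(n)}_1,\dots,\phi^{(n)}_4)$. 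Since $d(\sigma_n,\sigma)\to0$ bounds $|\phi^{\pm}_{\sigma_n}(E_i)-\phi^{\pm}_{\sigma}(E_i)|$, each $E_i$ stays $\sigma$-semistable with phase $\phi_i:=\lim_n\phi^{(n)}_i$; the same metric bounds the ratios $m^{(n)}_i/m_{\sigma}(E_i)$, so $m^{(n)}_i\to m_i:=m_{\sigma}(E_i)>0$ and $Z(E_i)=m_ie^{i\pi\phi_i}$. Passing to the limit in the defining inequalities of $\Theta_{\cE}$ gives $\phi_j-\phi_i\ge c_{ij}$ for all $i<j$; as $\Theta_{\cE}$ is open and $\sigma\notin\Theta_{\cE}$, at least one of these is an equality, and I would fix a pair $(i_0,j_0)$ realising one with $j_0-i_0$ minimal, so that the intermediate inequalities remain strict.

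Next I would carry out the mutation. The binding equality $\phi_{j_0}-\phi_{i_0}=c_{i_0j_0}$, together with the fact that every full exceptional collection on $\p3$ is \emph{strong} (\cite[Theorem 9.3]{Bon90}), determines the degrees in which the graded morphisms relating $E_{i_0},\dots,E_{j_0}$ are concentrated; consequently the appropriate left or right mutation of $\cE$ at this pair produces a new full exceptional collection $\cE'$ on $\cD^b(\p3)$ (mutations preserve fullness and exceptionality, cf.\ \cite{Bon90}), whose members $E'_k$ are still $\sigma$-semistable, and in fact $\sigma$-stable since exceptional. Reading the phases $\phi'_k$ of the $E'_k$ off the triangles defining the mutation, a direct computation with the central charge formula shows $Z(E'_k)=m'_ke^{i\pi\phi'_k}$ for suitable $m'_k>0$, and that the $\phi'_k$ satisfy the \emph{strict} inequalities $\phi'_l-\phi'_k>c_{kl}$ defining $\Theta_{\cE'}$. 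Therefore $\sigma$ satisfies the characterising conditions of the stability condition attached by \Cref{algstab} to $(m'_k,\phi'_k)$, and by the uniqueness asserted there, $\sigma\in\Theta_{\cE'}\subset\staba$.

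The main obstacle is this second step. When the binding equality involves non-adjacent members of $\cE$ one is forced to iterate the mutation and to keep track of how all four phases rotate, and then to check that the resulting tuple $(\phi'_k)$ lands \emph{strictly} inside $\Theta_{\cE'}$ rather than on one of its faces; the required arithmetic rests on the exact numerical compatibility between the homological spread of $\hom^{\bullet}(E_i,E_j)$ --- governed by $j-i$, via strongness on $\p3$ --- and the constants $c_{ij}$. One must also handle separately the degenerate subcase in which the relevant graded morphism space vanishes, where the ``mutation'' reduces to a re-ordering plus a shift and a little extra bookkeeping of phases is needed to reach the correct chamber.
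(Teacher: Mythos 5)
Your first paragraph is essentially sound: the metric controls $\phi^{\pm}$ and masses, so each $E_i$ stays $\sigma$-semistable in the limit with $Z(E_i)=m_ie^{i\pi\phi_i}$, $m_i>0$, and the closed inequalities $\phi_j-\phi_i\ge c_{ij}$ hold. The genuine gap is the second step, which is the heart of your argument. The claim that mutating $\cE$ at a binding pair produces a collection whose members are $\sigma$-semistable, ``and in fact $\sigma$-stable since exceptional'', is unjustified (exceptional does not imply stable), and in the basic model it is false. At a typical codimension-one boundary point an adjacent gap satisfies $\phi_{i+1}-\phi_i=1$; writing $E_i[k_i],E_{i+1}[k_i-1]$ for the shifts with phases in $(0,1]$, these are equal-phase $\sigma$-stable objects with $\mathrm{Hom}(E_i,E_{i+1})\neq 0$ contributing $\mathrm{Ext}^1$ in the heart between them. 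The mutation cone of the evaluation map is then a nonsplit extension of equal-phase stables (hence at best strictly semistable), or has a Harder--Narasimhan filtration with two distinct phases (hence unstable); either way it is not $\sigma$-stable, so $\sigma$ does not lie in the open cell $\Theta_{\cE'}$ of the mutated collection. So the ``main obstacle'' you flag is not bookkeeping: the mutation route does not land in an open algebraic cell at all.

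The mechanism that actually works is different and simpler, and no equality analysis or mutation is needed. Since every full exceptional collection on $\cD^b(\p3)$ is strong \cite{Bon90}, the closed inequalities (already the adjacent ones $\ge 1$) guarantee that the shifts $E_i[k_i]$ with $\sigma$-phases in $(0,1]$ form an Ext-exceptional collection; its extension closure is the heart of a bounded t-structure contained in $\cP((0,1])$, hence equal to it, and the $E_i[k_i]$ are its simple objects, so \emph{all} $E_i$ remain $\sigma$-stable on the boundary. Now replace $\cE$ by the shifted full exceptional collection $\cE'=(E_1[N],E_2[2N],E_3[3N],E_4[4N])$ with $N\gg0$: its members are $\sigma$-stable and their phases satisfy the strict inequalities of \Cref{algstab}, so by the uniqueness clause there $\sigma\in\Theta_{\cE'}\subset\staba$. (Also note your inference ``$\sigma\notin\Theta_{\cE}$, hence some inequality is an equality'' needs this same uniqueness/continuity point to rule out the case of strict inequalities with only semistable $E_i$; with the heart argument above it becomes unnecessary.) For calibration: the paper gives no proof of this proposition, citing \cite[Theorem 4.7]{M07b}, and Macr\`i's argument rests on the quiver-heart mechanism just described rather than on mutations.
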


Denote by $\stabp$ the {principal connected component} of $\Stab(\p3)$ that contains $\Im\Sigma_{\Psi}$. Note that $\Im\Sigma_{\Psi}$ is a contractible open subset in $\Stab(\p3)$. The following result provides a characterization of the stability conditions on $\partial\Im\Sigma_{\Psi}$:

\begin{proposition}
\label{partialgeo}
Let $(X,H)$ be a polarized threefold. For a stability condition $\sigma_{\alpha,\beta}^{a,b}=(Z_{\alpha,\beta}^{a,b},\cA_{\alpha,\beta})\in\overline{\Im\Sigma_{\Psi}}$, if there is a $\nu_{\alpha,\beta}$-semistable object $0\ne E\in\mathrm{Ker}\,Z_{\alpha,\beta}^{a,b}\cap\Cohb(X)$, then $\sigma_{\alpha,\beta}^{a,b}\in\partial\Im\Sigma_{\Psi}$. Moreover, if $\Psi_X(\alpha,\beta,b)\ge\frac{\alpha^2}{6}$, $\sigma\in\partial\Stab^{\Geo}(X)$.
\end{proposition}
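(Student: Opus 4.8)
The plan is to argue that the hypothesised object $E$ cannot coexist with $\sigma_{\alpha,\beta}^{a,b}$ being an interior point, because some shift of $E$ would then be a nonzero object of the heart $\cA_{\alpha,\beta}$ with vanishing central charge. First I would record that the normalised presentation is rigid: combining \Cref{reduce} with the normalisation $Z(\cO_x)=-1$, $\cO_x\in\cP(1)$ and the injectivity of $\Sigma_\Psi$ in \Cref{main1}, a pair of the form $(Z_{\alpha,\beta}^{a,b},\cA_{\alpha,\beta})$ lies in $\Im\Sigma_\Psi$ only when $(\alpha,\beta,a,b)\in\mathfrak B_\Psi$; and for such parameters \Cref{main1} asserts that this pair is a genuine Bridgeland stability condition, so $Z_{\alpha,\beta}^{a,b}$ restricts to a stability function on $\cA_{\alpha,\beta}$ and in particular $Z_{\alpha,\beta}^{a,b}(G)\neq 0$ for every $0\neq G\in\cA_{\alpha,\beta}$.

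Next I would locate $E$ (or $E[1]$) inside $\cA_{\alpha,\beta}$. Since $E\in\Ker Z_{\alpha,\beta}^{a,b}$, in particular $\Im Z_{\alpha,\beta}^{a,b}(E)=H\ch_2^{\beta}(E)-\tfrac{\alpha^2}{2}H^3\ch_0^{\beta}(E)=0$, and by \Cref{nuch1} either $H^2\ch_1^{\beta}(E)>0$ or $H^2\ch_1^{\beta}(E)=0$. In the first case $\nu_{\alpha,\beta}(E)=0$, and $\nu_{\alpha,\beta}$-semistability forces every subobject $F\hookrightarrow E$ in $\Cohb(X)$ to satisfy $\nu_{\alpha,\beta}(F)\le 0$; thus $E\in\mathcal F'_{\alpha,\beta}$, hence $E[1]\in\cA_{\alpha,\beta}$, while $Z_{\alpha,\beta}^{a,b}(E[1])=-Z_{\alpha,\beta}^{a,b}(E)=0$. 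In the second case $\nu_{\alpha,\beta}(E)=+\infty$, so every quotient of the $\nu_{\alpha,\beta}$-semistable $E$ has infinite slope, giving $E\in\mathcal T'_{\alpha,\beta}\subset\cA_{\alpha,\beta}$ with $Z_{\alpha,\beta}^{a,b}(E)=0$. Either way we contradict the previous paragraph, so $\sigma_{\alpha,\beta}^{a,b}\notin\Im\Sigma_\Psi$; combined with the hypothesis $\sigma_{\alpha,\beta}^{a,b}\in\overline{\Im\Sigma_\Psi}$, this yields $\sigma_{\alpha,\beta}^{a,b}\in\partial\Im\Sigma_\Psi$.

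For the final assertion, the same computation shows that $(Z_{\alpha,\beta}^{a,b},\cA_{\alpha,\beta})$ violates the stability-function axiom outright, so it is not a geometric stability condition and $\sigma_{\alpha,\beta}^{a,b}\notin\Stab^{\Geo}(X)$; it then remains to check $\sigma_{\alpha,\beta}^{a,b}\in\overline{\Stab^{\Geo}(X)}$. Here I would use the extra hypothesis $\Psi_X(\alpha,\beta,b)\ge\tfrac{\alpha^2}{6}$: via the second part of \Cref{main1} it gives $\mathfrak B_\Psi=\mathfrak B_\Psi^{\ast}$ and identifies $\Im\Sigma_\Psi$ with $\Stab^{\Geo\dagger}_H(X)$, which lies inside $\Stab^{\Geo}(X)$ by \cite[Proposition 2.1]{MP}; hence $\overline{\Im\Sigma_\Psi}\subset\overline{\Stab^{\Geo}(X)}$ and $\sigma_{\alpha,\beta}^{a,b}\in\partial\Stab^{\Geo}(X)$. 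The step I expect to be the real obstacle is the first one — making precise the ambient space in which these closures and boundaries are taken, so that $(Z_{\alpha,\beta}^{a,b},\cA_{\alpha,\beta})$ with parameters on $\partial\mathfrak B_\Psi$ is an admissible boundary point at all; once that is settled, the proposition collapses to the elementary fact that a genuine stability condition admits no nonzero object of its heart with vanishing central charge.
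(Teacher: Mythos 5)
Your observation that $E$ (or $E[1]$) sits in $\cA_{\alpha,\beta}$ as a nonzero object killed by $Z_{\alpha,\beta}^{a,b}$ is correct, and the case split via \Cref{nuch1} is a nice touch (the paper's own computation silently divides by $H^2\ch_1^{\beta}(E)$ and ignores the case $H^2\ch_1^{\beta}(E)=0$). But the route you build on it diverges from the paper's proof in a way that loses the actual content of the statement, and the obstacle you flagged at the end is exactly where it breaks. If $\overline{\Im\Sigma_{\Psi}}$ and $\partial\Im\Sigma_{\Psi}$ are taken inside $\Stab_H(X)$, as your argument requires, then the very contradiction you derive (a nonzero heart object with vanishing central charge) shows that the pair $(Z_{\alpha,\beta}^{a,b},\cA_{\alpha,\beta})$ is not a stability condition at all, hence cannot lie in $\overline{\Im\Sigma_{\Psi}}\subset\Stab_H(X)$ to begin with: under your reading the hypothesis is never satisfiable and the proposition is vacuous. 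The paper instead reads the hypothesis at the level of the central charge/parameters, namely $(\alpha,\beta,a,b)\in\overline{\mathfrak{B}_{\Psi}}$, i.e. $a\ge\max\{\tfrac{\alpha^2}{6},\Psi_X(\alpha,\beta,b)\}$, and its conclusion is quantitative: from $\Im Z_{\alpha,\beta}^{a,b}(E)=0$ one gets $\nu_{\alpha,\beta}(E)=0$, so by the definition of $\Psi_X$ (\Cref{Psi}) the ratio $\bigl(\ch_3^{\beta}(E)-bH\ch_2^{\beta}(E)\bigr)/H^2\ch_1^{\beta}(E)$ is at most $\Psi_X(\alpha,\beta,b)\le a$, while $\Re Z_{\alpha,\beta}^{a,b}(E)=0$ says this ratio equals $a$; hence $a=\Psi_X(\alpha,\beta,b)$, i.e. the parameters lie exactly on the wall cut out by $\Psi_X$. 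This identity is the real point of the proposition (it is what makes it ``partial evidence'' that $\Im\Sigma_{\Psi}$ is the whole geometric chamber), and your argument never produces it: even transported to the parameter level, ruling out the open set $\mathfrak{B}_{\Psi}$ only yields $a=\max\{\tfrac{\alpha^2}{6},\Psi_X(\alpha,\beta,b)\}$, which is strictly weaker when $\Psi_X(\alpha,\beta,b)<\tfrac{\alpha^2}{6}$ (in that regime the paper's chain of inequalities in fact excludes any such $E$ with $H^2\ch_1^{\beta}(E)>0$).

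The ``moreover'' part shows the same symptom: the containment $\Im\Sigma_{\Psi}\subset\Stab^{\Geo}(X)$ holds with no hypothesis, so in your version the assumption $\Psi_X(\alpha,\beta,b)\ge\tfrac{\alpha^2}{6}$ does no work --- another sign the intended reading is the parameter-level one, where that assumption is needed (via the second half of \Cref{main1}) to identify the wall $a=\Psi_X(\alpha,\beta,b)$ with the boundary of the full set $\Stab^{\mathrm{Geo}\dagger}_H(X)$ of normalized geometric stability conditions. Two smaller remarks: you do not need the opening discussion of injectivity and of \Cref{reduce} at all (the only input is that points of $\Im\Sigma_{\Psi}$ are stability conditions, so their central charges are stability functions on their hearts); and to match the paper's statement you must in any case carry out the short computation giving $a=\Psi_X(\alpha,\beta,b)$, which is the step your proposal omits.
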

\begin{proof}
By definition, we have
\[(\alpha,\beta,a,b)\in\mathbb R_{\ge0}\times\mathbb R^3, a\ge\max\{\frac{\alpha^2}{6},\Psi_X(\alpha,\beta,b)\}.\]
 Since $E\in\mathrm{Ker}\,Z_{\alpha,\beta}^{a,b}$, we have 
\begin{equation*}
\begin{split}
\Re(Z(E))&=\ch_3^{\beta}(E)-bH\ch_2^{\beta}(E)-aH^2\ch_1^{\beta}(E)=0,\\
\Im(Z(E))&=H\ch_2^{\beta}(E)-\frac{\alpha^2}{2}H^3\ch_0^{\beta}(E)=0.
\end{split}
\end{equation*}
Then, we obtain
\[
\frac{\ch_3^{\beta}(E)-bH\ch_2^{\beta}(E)}{H^2\ch_1^{\beta}(E)}\le\Psi_{\p3}(\alpha,\beta,b)\le a=\frac{\ch_3^{\beta}(E)-bH\ch_2^{\beta}(E)}{H^2\ch_1^{\beta}(E)},
\]
which implies $a=\Psi_{X}(\alpha,\beta,b)$, i.e. $\sigma_{\alpha,\beta}^{a,b}\in\partial\Im\Sigma_{\Psi}$. The moreover part follows from \Cref{main1}.
\end{proof}
Note that \Cref{partialgeo} remains valid under the $\gl2$-action. We now formulate the principal conjecture in this section:
\begin{conjecture}
\label{mian3}
The principal connected component $\stabp\subset\Stab(\p3)$ is the union of geometric and algebraic stability conditions and is contractible.
\[
\stabp=\stabg\bigcup\staba.
\]
\end{conjecture}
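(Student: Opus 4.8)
\textbf{A strategy for \Cref{mian3}.} We outline the approach we would take, which adapts the program of \cite{Li17} for $\mathbb P^2$. The two assertions of the conjecture --- that $\stabp=\stabg\cup\staba$ and that $\stabp$ is contractible --- would be established together: one first classifies the hearts occurring in $\stabp$, deducing that $\stabp=\stabg\cup\staba$, and then builds an explicit deformation retraction of $\stabp$ to a point by contracting $\staba$ onto $\overline{\stabg}$ and $\stabg$ onto a point.

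\emph{Step 1 (pin down $\stabg$).} By \Cref{reduce}, every geometric stability condition on $\cD^b(\p3)$ is, after the $\gl2$-action, of the form $(Z^{a,b}_{\alpha,\beta},\cA_{\alpha,\beta})$ with $\alpha>0$, and we expect --- with partial evidence from \Cref{partialgeo} --- that $\stabg=\Im\Sigma_{\Psi}(\p3)$; granting this, $\stabg$ is contractible since $\Im\Sigma_{\Psi}$ is, by \Cref{refine}. The equality $\stabg=\Im\Sigma_{\Psi}$, namely that the support property forces the parameters of a standard-form geometric $\sigma$ into $\mathfrak{B}_{\Psi}$, should follow by the argument of \Cref{main1} once one knows $\Psi_{\p3}(\alpha,\beta,b)=\frac{\alpha^2}{6}+\frac{\alpha}{2}|b|$ for all real $\beta$ (currently only for $\beta\in\mathbb Z$).

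\emph{Step 2 (organize $\staba$ and classify).} We would stratify $\staba=\bigcup_{\cE}\Theta_{\cE}$ by the mutation graph of full exceptional collections on $\p3$: each $\Theta_{\cE}$ is homeomorphic to a convex open set cut out by the linear phase inequalities of \Cref{algstab}, hence contractible; $\Theta_{\cE}$ meets $\Theta_{\cE'}$ precisely when $\cE'$ is reachable from $\cE$ by mutations compatible with those inequalities; the subchamber $\Theta^{\ast}_{\cE}$ has only the $E_i[m]$ as stable objects; and $\partial\Theta_{\cE}\subset\staba$ by \cite[Theorem 4.7]{M07b}. The core of the proof is then the ``no escape'' statement that every $\sigma\in\stabp$ is geometric or algebraic, together with the fact that $\staba\subset\stabp$. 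We would establish the latter, and prove the former, by wall-crossing out of $\Im\Sigma_{\Psi}$: at a boundary point carrying a nonzero $\nu_{\alpha,\beta}$-semistable $E\in\Ker Z^{a,b}_{\alpha,\beta}\cap\Cohb(\p3)$, \Cref{partialgeo} gives $a=\Psi_{\p3}(\alpha,\beta,b)$; inspecting the Jordan--H\"older factors of $E$ and of the degenerating skyscraper $\cO_x$ one sees the heart acquires an exceptional object, so the nearby $\sigma$ lies in some $\overline{\Theta_{\cE}}$ (e.g.\ a Beilinson-type heart $\langle\cO,\cO(1),\cO(2),\cO(3)\rangle$), and crossing further walls --- again governed by mutations --- never leaves $\stabg\cup\staba$.

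\emph{Step 3 (contract), and the main obstacle.} Finally, following \cite{Li17,FLLQ}, one retracts $\staba$ onto $\overline{\staba}\cap\overline{\stabg}$ by flattening phases and unifying the masses $m_i$ --- the direction along which the global dimension function decreases --- then retracts onto $\stabg$, and contracts $\stabg$ to a point by Step 1. The decisive difficulty is the classification in Step 2: for $\mathbb P^2$, Li exploited the explicitly two-dimensional wall geometry and the continued-fraction combinatorics of the relevant exceptional pairs, whereas for $\p3$ one must control all four-term full exceptional collections and their much richer mutation graph and rule out ``exotic'' hearts interpolating between the geometric and algebraic regimes. A secondary, genuinely open, ingredient is Step 1's identification $\stabg=\Im\Sigma_{\Psi}$, equivalently that $\Psi_{\p3}(\alpha,\beta,b)=\frac{\alpha^2}{6}+\frac{\alpha}{2}|b|$ for all real $\beta$.
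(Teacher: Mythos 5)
The statement you are addressing is stated in the paper as a conjecture (\Cref{mian3}); the paper offers no proof of it, only partial evidence (\Cref{partialgeo}, the computation of $\Psi_{\p3}$ at integral parameters, and the analogy with \cite{Li17,FLLQ}). Your proposal is therefore not being measured against an existing argument, and it should be read as a research program rather than a proof. As a program it is consistent with the paper's own expectations --- identifying $\stabg$ with $\Im\Sigma_{\Psi}$, tiling $\staba$ by the chambers $\Theta_{\cE}$, and contracting along the lines of \cite{Li17,FLLQ} --- but it does not close any of the steps that make the conjecture open, and you correctly flag some of them yourself.

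To be concrete about where the genuine gaps lie: (1) Step 1 requires both that $\Psi_{\p3}(\alpha,\beta,b)=\frac{\alpha^2}{6}+\frac{\alpha}{2}|b|$ for all real $(\alpha,\beta)$ (the paper proves this only for $(\alpha,\beta)\in\mathbb Z_{>0}\times\mathbb Z$) and, more seriously, the converse direction that \emph{every} geometric stability condition in standard form has parameters in $\mathfrak{B}^{\ast}_{\Psi}$; \Cref{main1} only shows the support property holds on $\mathfrak{B}_{\Psi}$ and identifies $\Stab^{\Geo\dagger}_H$ under an unverified hypothesis on $\Psi_X$, and \Cref{partialgeo} constrains boundary points only when a semistable object happens to lie in $\Ker Z^{a,b}_{\alpha,\beta}\cap\Cohb$, which is not known to occur at every boundary point. (2) The ``no escape'' step is the heart of the matter and your sketch of it --- ``inspecting the Jordan--H\"older factors \dots one sees the heart acquires an exceptional object'' --- is an assertion, not an argument: unlike $\mathbb P^2$, where Li could use the explicit two-dimensional wall geometry and the classification of exceptional bundles via continued fractions, for $\p3$ there is no classification of the stable objects appearing on $\partial\Im\Sigma_{\Psi}$, no proof that the algebraic chambers $\Theta_{\cE}$ cover a neighbourhood of that boundary, and no control of the mutation graph of four-term full exceptional collections; moreover the paper itself warns (via the non-constant global dimension phenomenon of \cite{FLLQ}) that $\stabg$ is expected to be strictly larger than the locus where $\gldim=3$, so the geometry of the chamber you must exit is itself unknown. (3) The contraction in Step 3 presupposes both (1) and (2) and, as in \cite{FLLQ}, would need a monotonicity statement for $\gldim$ along the proposed retraction, which is not established for $\p3$. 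In short, your outline is a reasonable roadmap matching the paper's stated expectations, but each of its three steps contains an open problem, so it does not constitute a proof of \Cref{mian3}.
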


\begin{remark}
The principal connected component $\stabp$ has also been considered in \cite{OPT} for the purpose of studying the reduced Donaldson-Thomas invariants. To delve into this conjecture, a more comprehensive understanding of full exceptional collections in $\cD^b(\p3)$ is required, as well as a complete description of $\stabg$. As observed in \cite{FLLQ}, the global dimension of geometric stability conditions in $\Stab^{\Geo}(\mathbb P^2)$ is not always 2. By analogy, it is expected that the global dimension of  $\stabg$ will not consistently be 3. By \Cref{main2}, the global dimension of geometric stability conditinos constructed by \cite{BMS} is 3, which then is not expected to be $\stabg$. However, we expect the space $\Im\Sigma_{\Psi}$, as constructed in \Cref{main1}, corresponds to $\stabg$. \Cref{partialgeo} provides some evidence to support this expectation. Furthermore, in accordance with \cite{FLLQ}, we anticipate that $\stabp$ is contractible with respect to the global dimension function.
\end{remark}


\appendix


\begin{thebibliography}{9999}
\newcommand{\au}[1]{\textrm{#1},}
\newcommand{\ti}[1]{\textrm{#1},}
\newcommand{\jo}[1]{\textit{#1}}
\newcommand{\vo}[1]{\textbf{#1}}
\newcommand{\yr}[1]{(#1)}
\newcommand{\pp}[1]{\textrm{#1}}
\newcommand{\arxiv}[1]{\href{http://arxiv.org/abs/#1}{arXiv:#1}}


\bibitem[Bon90]{Bon90}
\au{Bondal, A. I.}
\ti{Representations of associative algebras and coherent sheaves}
\jo{Mathematics of the USSR-Izvestiya} \vo{34(1)} \pp{23–42} \yr{1990}

\bibitem[Bri07]{B07}
\au{Bridgeland, T.}
\ti{Stability conditions on triangulated categories}
\jo{Annals of Mathematics} \vo{166} \pp{317-345} \yr{2007}
\arxiv{math/0212237v3}.

\bibitem[Bri08]{B08}
\au{Bridgeland, T.}
\ti{Stability conditions on K3 surfaces}
\jo{Duke Mathematical Journal} \vo{141(2)}  \pp{241–291} \yr{2008}
\arxiv{math/0307164}

\bibitem[BM11]{BM11}
\au{Bayer, A.\and Macrì, E.} \ti{The space of stability conditions on the local projective plane} \jo{Duke Mathematical Journal} \vo{160(2)} \pp{263–322} \yr{2011}
\arxiv{0912.0043}

\bibitem[BMT14]{BMT}
\au{Bayer, A., Macrì, E. \and Toda, Y.}
\ti{Bridgeland stability conditions on threefolds I: Bogomolov-Gieseker type inequalities}
\jo{Journal of Algebraic Geometry} \vo{23(1)} \pp{117–163}
\yr{2014}
\arxiv{1103.5010}

\bibitem[BMS16]{BMS}
\au{Bayer, A., Macrì, E. \and Stellari, P.}
\ti{The space of stability conditions on abelian threefolds, and on some Calabi-Yau threefolds}
\jo{Inventiones Mathematicae} \vo{206(3)}, \pp{869–933}
\yr{2016}
\arxiv{1410.1585}

\bibitem[BMSZ17]{BMSZ}
\au{Bernardara, M., Macr\`{i}, E., Schmidt, B.\and Zhao, X} \ti{Bridgeland Stability Conditions on Fano Threefolds} \jo{Épijournal de Géométrie Algébrique} \vo{1} \yr{2017}

\bibitem[BQS20]{BQS}
\au{Bridgeland, T., Qiu, Y. \and Sutherland, T.}  \ti{Stability conditions and the A2 quiver} \jo{Advances in Mathematics} \vo{365} \yr{2020}
\arxiv{1406.2566}

\bibitem[CHS24]{CHS}
\au{Coskun, I., Huizenga, J.\and Smith, G.}
\ti{Stability and Cohomology of Kernel Bundles on $\mathbb P^{n}$}
\jo{Michigan Mathematical Journal} \vo{1(1)} \pp{1–26}
\yr{2024}
\arxiv{2204.10247}

\bibitem[Del22]{D22}
\au{Dell, H.}
\ti{Stability Conditions on Free Abelian Quotients}
\yr{2023}
\arxiv{2307.00815}

\bibitem[DK16a]{DK16a}
\au{Dimitrov, G. \and Katzarkov, L.} \ti{Non-semistable Exceptional Objects in Hereditary Categories} \jo{International Mathematics Research Notices} \vo{2016(20)} \pp{6293–6377} \yr{2016}
\arxiv{arXiv:1311.7125}

\bibitem[DK16b]{DK16b}
\au{Dimitrov, G. \and Katzarkov, L.} \yr{2016} \ti{Bridgeland stability conditions on the acyclic triangular quiver} \jo{Advances in Mathematics} \vo{288} \pp{825–886}
\arxiv{1410.0904}

\bibitem[DK19]{DK19}
\au{Dimitrov, G. \and Katzarkov, L.} \ti{Bridgeland stability conditions on wild Kronecker quivers} \jo{Advances in Mathematics} \vo{352} \pp{27–55} \yr{2019}
\arxiv{1602.09117}

\bibitem[FLLQ23]{FLLQ}
\au{Fan, Y.-W., Li, C., Liu, W. \and Qiu, Y} \ti{Contractibility of space of stability conditions on the projective plane via global dimension function} \jo{Mathematical Research Letters} \vo{30(1)} \pp{51–87} \yr{2023} \arxiv{2001.11984}

\bibitem[FLZ22]{FLZ}
\au{Fu, L., Li, C. \and Zhao, X.}
\ti{Stability manifolds of varieties with finite Albanese morphisms}
\yr{2021}
\arxiv{2103.07728}

\bibitem[GHS18]{GHS}
\au{P. Gallardo, C. L. Huerta \and B. Schmidt}
\ti{Families of elliptic curves in $\mathbb P^3$ and Bridgeland stability}
\jo{Michigan Mathematical Journal} \vo{67(4)} \pp{787-813} \yr{2018}
\arxiv{1609.08184}

\bibitem[GR87]{GR}
\au{Gorodentsev, A. L. \and Rudakov, A. N.}
\ti{Exceptional vector bundles on projective spaces}
\jo{Duke Mathematical Journal} \vo{54(1)} \pp{115–130}
\yr{1987}

\bibitem[HKK14]{HKK}
\au{Haiden, F., Katzarkov, L. \and Kontsevich, M.}  \ti{Flat surfaces and stability structures} \jo{Publications Mathématiques de l’IHÉS} \vo{126(1)} \pp{247–318} \yr{2017}
\arxiv{1409.8611}

\bibitem[HMS08]{HMS}
\au{Huybrechts, D., Macrì, E. \and Stellari, P.} \ti{Stability conditions for generic K3 categories} \jo{Compositio Mathematica} \vo{144(1)} \pp{134–162} \yr{2008}
\arxiv{math/0608430}

\bibitem[HRS96]{HRS}
\au{Happel, D., Reiten, I. \and Smal\o, S. O. }
\ti{Tilting in abelian categories and quasitilted algebras}
\jo{Memoirs of the American Mathematical Society} \vo{120} \pp{88pp}
\yr{1996}

\bibitem[IQ23]{IQ1}
\au{Ikeda, A., \and Qiu, Y.} \ti{$q$-stability conditions on Calabi-Yau-$\mathbb X$ categories} \jo{Compositio Mathematica} \vo{159(7)} \pp{1347–1386} \yr{2023}
\arxiv{1807.00469}

\bibitem[Kaw23]{Kaw23}
\au{Kawatani, K. \and Minamoto, H.}
\ti{$R$-Linear Triangulated Categories and Stability Conditions}
\jo{Applied Categorical Structures} \vo{31} \yr{2023}
\arxiv{2208.10034}

\bibitem[Kos18]{Kos18}
\au{Koseki, N.}  \ti{Stability conditions on product threefolds of projective spaces and Abelian varieties} \jo{Bulletin of the London Mathematical Society} \vo{50(2)} \pp{229–244} \yr{2018}
\arxiv{1703.07042}

\bibitem[Kos20]{Kos20}
\au{Koseki, N.} \ti{Stability conditions on threefolds with nef tangent bundles} \jo{Advances in Mathematics} \vo{372} \yr{2020} \arxiv{1811.03267}

\bibitem[Kos22]{Kos22}
\au{Koseki, N.} \ti{Stability conditions on Calabi-Yau double/triple solids} \jo{Forum of Mathematics, Sigma} \vo{10} \yr{2022} \arxiv{2007.00044}

\bibitem[KO95]{KO}
\au{S. A. Kuleshov \and D. O. Orlov}
\ti{Exceptional sheaves on Del Pezzo surfaces}
\jo{Russian Academy of Sciences. Izvestiya Mathematics} \vo{44(3)} \pp{479-513} \yr{1995}

\bibitem[KOT21]{KOT}
\au{Kikuta, K., Ouchi, G. \and Takahashi, A.}
\ti{Serre dimension and stability conditions} \jo{Mathematische Zeitschrift}
\vo{299(1)} \pp{997–1013} \yr{2021}
\arxiv{1907.10981}

\bibitem[KS08]{KS}
\au{Kontsevich, M. \and Yan, S.}
\ti{Stability structures, motivic Donaldson-Thomas invariants and cluster transformations}
\yr{2008}
\arxiv{0811.2435}.

\bibitem[Li17]{Li17}
\au{Li, C}
\ti{The space of stability conditions on the projective plane}
\jo{Selecta Mathematica} \vo{23(4)} \pp{2927–2945} \yr{2017}
\arxiv{1611.02087}

\bibitem[Li19]{Li19}
\au{Li, C.} \ti{On stability conditions for the quintic threefold} \jo{Inventiones Mathematicae} \vo{218(1)} \pp{301–340} \yr{2019}  \arxiv{1810.03434}

\bibitem[Liu22]{Liu22}
\au{Liu, S.} \ti{Stability condition on Calabi–Yau threefold of complete intersection of quadratic and quartic hypersurfaces} \jo{Forum of Mathematics, Sigma} \vo{10} \yr{2022}
\arxiv{2108.08934}

\bibitem[Mac07a]{M07a}
\au{Macr\`{i}, E.}
\ti{Stability conditions on curves}
\jo{Mathematical Research Letters} \vo{14(4)} \pp{657-672}
\yr{2007}
\arxiv{0705.3794}

\bibitem[Mac07b]{M07b}
\au{Macr\`{i}, E. }
\ti{Some examples of spaces of stability conditions on derived categories}
\yr{2007}
\arxiv{math/0411613}

\bibitem[Mac14]{M14}
\au{Macrì, E.} \ti{A generalized Bogomolov–Gieseker inequality for the three-dimensional projective space} \jo{Algebra $\&$ Number Theory} \vo{8(1)} \pp{173–190} \yr{2014} \arxiv{1207.4980}

\bibitem[Moz22]{Moz22}
\au{Mozgovoy, S.}
\ti{Wall-crossing structures on surfaces}
\yr{2022} \arxiv{2201.08797}

\bibitem[Mu21]{Mu21}
\au{Mu, D.} \ti{New moduli spaces of one-dimensional sheaves on $\mathbb P^{3}$} \jo{Nagoya Mathematical Journal} \vo{254} \pp{265–314} \yr{2024} \arxiv{2002.00442}

\bibitem[MP15]{MP}
\au{Maciocia, A. \and Piyaratne, D.}
\ti{Fourier–Mukai transforms and Bridgeland stability conditions on abelian threefolds}
\jo{Algebraic Geometry} \vo{2(3)} \pp{270–297}
\yr{2015} \arxiv{1304.3887}

\bibitem[OPT22]{OPT}
\au{Oberdieck, G., Piyaratne, D. \and Toda, Y.} \ti{Donaldson–Thomas invariants of abelian threefolds and Bridgeland stability conditions} \jo{Journal of Algebraic Geometry} \vo{31(1)} \pp{13–73} \yr{2022} \arxiv{1808.02735}

\bibitem[Pet22]{Pet22}
\au{Petkovi\'{c}, M.}
\ti{Positivity of determinant line bundles on the moduli spaces of sheaves and stability on the Veronese double cone, PhD thesis, The University of Utah} \pp{99 pp} \yr{2022}

\bibitem[Piy17]{Piy17}
\au{Petkovi\'{c}, M.}
\ti{Stability conditions, Bogomolov-Gieseker type inequalities and Fano 3-folds}
\yr{2017} \arxiv{1705.04011}

\bibitem[Pol14]{Pol14}
\au{Polishchuk, A.} \ti{Phases of Lagrangian-invariant objects in the derived category of an abelian variety} \jo{Kyoto Journal of Mathematics} \vo{54(2)} \pp{427–482} \yr{2014}
\arxiv{1203.2300}

\bibitem[Qiu15]{Qiu15}
\au{Qiu, Y.}
\ti{Stability conditions and quantum dilogarithm identities for Dynkin quivers}
\jo{Advances in Mathematics} \vo{269} \pp{220–264} \yr{2015}
\arxiv{1111.1010}

\bibitem[Qiu18]{Qiu18}
\au{Qiu, Y.} \ti{Global dimension function on stability conditions and Gepner equations}
\jo{Mathematische Zeitschrift} \vo{303(1)}\yr{2022}
\arxiv{1807.00010}

\bibitem[Qiu20]{Qiu20}
\au{Qiu, Y.}
\ti{Contractible flow of stability conditions via global dimension function} \yr{2020}
\arxiv{2008.00282}

\bibitem[QW18]{QW}
\au{Qiu, Y., \and Woolf, J.} \ti{Contractible stability spaces and faithful braid group actions} \jo{Geometry $\&$ Topology} \vo{22(6)} \pp{3701–3760} \yr{2018}
\arxiv{1407.5986}

\bibitem[Re23]{Re23}
\au{Rekuski, N.}
\ti{Contractibility of the geometric stability manifold of a surface} \yr{2023}
\arxiv{2310.10499}

\end{thebibliography}
\end{document}